\documentclass[11pt]{article}

\usepackage[a4paper,margin=27mm]{geometry}
\usepackage[T1]{fontenc}
\usepackage{lmodern}
\usepackage{microtype}
\usepackage{amsmath,amssymb,amsthm,mathtools}
\numberwithin{equation}{section}
\usepackage{enumitem}
\usepackage{tikz}
\usepackage[hidelinks]{hyperref}

\newtheorem{theorem}{Theorem}[section]
\newtheorem{proposition}[theorem]{Proposition}
\newtheorem{lemma}[theorem]{Lemma}
\newtheorem{corollary}[theorem]{Corollary}
\theoremstyle{definition}

\theoremstyle{remark}

\newcommand{\E}{\mathbb E}
\newcommand{\Prb}{\mathbb P}
\newcommand{\Var}{\operatorname{Var}}

\setlist[itemize]{leftmargin=1.6em,itemsep=0.25em,topsep=0.35em}
\setlist[enumerate]{leftmargin=1.8em,itemsep=0.3em,topsep=0.4em}

\title{The terminal leave in random greedy hypergraph matching}

\author{Peter Keevash\thanks{Mathematical Institute, University of Oxford, UK. Supported by ERC Advanced Grant 883810.}}

\date{}

\begin{document}
\maketitle

\begin{abstract}
Let $G$ be the terminal graph of the random triangle-removal process
started from $K_n$.  We prove that $e(G)/n^{3/2}\to1/(2\sqrt2)$ in $L^2$.  
More generally, we obtain the analogous result for any fixed strictly $k$-balanced $k$-graph.
Our main tool is a more general theorem on random greedy matchings 
in nearly regular uniform hypergraphs with few short cycles
under a stability assumption on the survival equations.
\end{abstract}

\section{Introduction}
\label{sec:intro}

In the random triangle-removal process we start with the complete graph $K_n$, 
choose a uniformly random triangle from the current graph and
delete its edges, repeating until no triangle remains.  Let $G$
be the terminal graph.  We prove the sharp asymptotic
\[
 \frac{e(G)}{n^{3/2}}\xrightarrow{L^2}\frac1{2\sqrt2}.
\]
Bollob\'as recounts~\cite[pp.~231--232]{BollobasErdosHistory}
that he and Erd\H{o}s introduced this process
at the 1990 \emph{Quo Vadis, Graph Theory?} conference in
Fairbanks, conjecturing that the likely number of edges in the leave
has order $n^{3/2}$. Progress was obtained by Spencer~\cite{Spencer},
then Grable~\cite{Grable}, culminating in a breakthrough by
Bohman, Frieze and Lubetzky~\cite{BFL} who proved that
the terminal exponent is $3/2 + o(1)$. Joos and K\"uhn ~\cite{JoosKuhn} obtained
corresponding approximate exponent results for any strictly $k$-balanced $k$-graph.
Our method also gives the sharp asymptotic in this generality,
and even more generally for random greedy matchings
in nearly regular hypergaphs of high girth, 
under an additional stability assumption to be discussed below.

\subsubsection*{Statement on AI use and intellectual priority} 

I started sharing my ideas on the triangle removal process with ChatGPT in October 2025, having worked on it intermittently during the last decade, developing ideas that originated in my work with Tom Bohman \cite{bohman2010early,bohman2021dynamic}. I refined these ideas over many discussions during the past year, then over the last few weeks I developed the key breakthrough in solving the problem. The solution needed very little of the more sophisticated machinery that I developed during the past year (for which I expect to find other applications in the future). And remarkably, the solution (when specialised to triangles) was essentially the same as that in the recent OpenAI sharing progress in mathematics announcement (manuscript 188). The similarity and timing seem more than coincidental, so it is plausible that training on my conversations helped the OpenAI model solve the problem, although I have no direct evidence that this occurred. Recent online reactions by the mathematical community suggest that my situation is by no means unique. I would welcome clarification from OpenAI about whether their model used my work and how they propose to handle questions of intellectual priority and acknowledgement.

\subsubsection*{Contents and exposition of the paper}

In the next section I have tried to convey the main ideas and how they can naturally be discovered by a human mathematician rather than presented as an AI fait accompli. Thereafter the paper has two parts: the first proves the general result on random greedy matchings in hypergraphs; the second applies it to the hypergraph removal process for strictly $k$-balanced $k$-graphs. The main new ideas are in the first part, although the second part also requires ideas beyond those in the triangle case. The overall approach, structure and exposition of the paper reflect my mathematical choices, but some of the writing is AI-assisted, as following the recent OpenAI announcement, I am making my paper available promptly rather than delaying publication for stylistic improvements.

\section{Motivation and proof strategy}
\label{sec:motivation}

For concreteness, we describe the strategy as specialised to the triangle removal process.
We start in Subsection \ref{subsec:freeze} by setting the scene for the proof:
using the previously known results we can run the process to quite near the end, 
which we then `freeze' as a fixed graph satisfying various nice properties (the `handoff event').
The remaining analysis is local: for each surviving edge we analyse the probability that it survives
in the remaining graph. It is not as hard to show concentration, so this is the main difficulty.
Intuitively, as there are few triangles and they are randomly distributed, this survival probability 
should be well approximated by a tree heuristic. We explain this heuristic in Subsection \ref{subsec:tree} 
and describe an example showing why it can fail on arbitrary tree-like hypergraphs.
In Subsection \ref{subsec:stability} we explain why the tree prediction fails in this example
due to potential instability of the  survival equations. 
Then in Subsection \ref{subsec:actual} we state a stability assumption
that keeps this instability under control (but does not eliminate it entirely)
and indicate a coupling argument that allows us to conclude
an approximation for the actual process to the tree-like heuristic.
It remains to indicate in Subsection \ref{subsec:mixing} why the stability assumption
holds in the graph obtained from the triangle removal process handoff:
it can be deduced from known results on approximate counting of subgraphs,
specifically chains of triangles.

\subsection{Freezing the process and isolating the conditional mean} \label{subsec:freeze}

In the usual differential-equation picture of triangle removal
\cite{Wormald,BFL,JoosKuhn}, the current edge density is $p$, and a
surviving edge typically belongs to about $np^2$ triangles.  The
trajectory estimates are strongest while this number remains large.
Instead of extending them into the sparse terminal regime, stop after
a deterministic number $i_\star$ of deletions at density
\[
 p_\star:=1-\frac{6i_\star}{n^2}\sim n^{-1/2+\eta},
\]
where $\eta>0$ is a sufficiently small fixed constant.  Let $G_\star$
be the graph at that step (or the terminal graph if the process stops
earlier).  On the high-probability handoff event from Part~II, it has
about $N=n^2p_\star/2$ edges, each belonging to about
$D=np_\star^2$ triangles.

There is a useful way to describe the rest of the process.  Regard
the edges of $G_\star$ as \emph{resources}, and its triangles as
\emph{candidates}, each using its three edges.  Give the candidates
independent continuous priorities; inspect them in increasing
priority order, accepting a candidate exactly when none of its
resources has previously been used.  
For a candidate $A$, its acceptance can be decided without running
the whole process: consider the conflicting candidates of earlier
priority in increasing order, recursively decide whether each was
accepted, and stop as soon as one was.  This \emph{backwards query}
gives the correct answer by induction over the priority order.  It is very surprising that
this simple viewpoint suffices to analyse the remainder of the process.

Let $Z$ count the terminal graph edges and put
$\mu(G_\star)=\E[Z\mid G_\star]$.  Fluctuations about this
conditional mean are comparatively easy to control.  
Indeed, it follows from~\cite[Theorem~1.1]{DMM} that random greedy matching in
a simple $3$-uniform hypergraph with $N$ resources and maximum
resource degree $d$ leaves a number $Z_H$ of uncovered resources
satisfying $\Var Z_H\le C Nd^5$. On the good event for the triangle handoff we have
\[
 N=(1+o(1))\frac{n^2p_\star}{2},\qquad
 D=(1+o(1))np_\star^2.
\]
In particular, fluctuations are negligible on the $n^{3/2}$ scale: we have
\[
 \Var(Z\mid G_\star)\le Cn^7p_\star^{11}
 =O(n^{3/2+11\eta})=o(n^3).
\]
Thus the mean is the main difficulty. 
We will also estimate the variance without relying on \cite{DMM}.

\subsection{The regular-tree prediction and its limitations} \label{subsec:tree}

What terminal leave should we expect?
Suppose, as a first approximation, that a resource meets exactly $d$
candidates and that the branches reached through different candidates
are independent.  For a candidate branch, let $f_d(t)$ be the
probability that it does not consume its parent resource before priority
$t$.  It can consume that resource at time $t$ only if each of its
other $r-1$ resources has survived its remaining $d-1$ branches.
Thus $f_d'(t)=-f_d(t)^{(r-1)(d-1)}$, with $f_d(0)=1$.
The probability that the root survives is $q_d(t)=f_d(t)^d$,
so the same calculation gives
\[
 q_d'(t)=-d\,q_d(t)^{r(1-1/d)},\qquad q_d(0)=1.
\]
Setting $d=D$, which is large, we approximate by $q'=-D q^r$, with solution 
\[
 Q_D(t)=(1+(r-1)Dt)^{-1/(r-1)}.
\]
Consequently the predicted number of surviving resources is
\[
 NQ_D(1)
 \sim\frac{n^2p_\star/2}{\sqrt{2np_\star^2}}
 =\frac{n^{3/2}}{2\sqrt2}.
\]
The density $p_\star$ cancels, which explains why we can determine
the sharp constant without following the process into the very sparse regime.

Next we describe an example to show that we need more
from the handoff than being nearly regular and tree-like.
Consider three resource classes of sizes $a,a,b$, where
$a>b$, with every candidate using one resource from each class.  In
the complete tripartite example the resource degrees are $ab,ab,a^2$,
which are nearly equal when $a/b\to1$.  Yet every matching uses at
most $b$ candidates, leaving at least $2(a-b)$ resources.

This obstruction persists at arbitrarily large fixed incidence girth.
Indeed, fix $g\ge2$ and take $b=\lceil d^{4g}\rceil$ and
$a=b+\lceil bd^{-1/4}\rceil$.  Retain each tripartite triple
independently with probability $d/b^2$, then delete one candidate
from each incidence cycle of length at most $2g$.  Standard
concentration and first-moment estimates show that the degrees are
$d(1+O(d^{-1/4}))$ and that distinct short cycles have disjoint
full resource supports (including all resources belonging to their
candidates); thus each resource loses at most one candidate.  We
obtain a linear candidate hypergraph of incidence girth greater
than $2g$, with $N=2a+b=d^{O_g(1)}$ resources.  Every matching
still leaves at least $2(a-b)\asymp bd^{-1/4}$ resources, whereas
the regular-tree prediction $N/\sqrt{2d}$ is of order $bd^{-1/2}$.

This example is an abstract candidate hypergraph, not a triangle
system arising from a removal-process handoff.  It shows that
near-regularity, polynomial host size and arbitrarily large fixed
girth do not suffice: we must control how resource types interact
through candidates.

\subsection{Why the tree prediction can be unstable} \label{subsec:stability}

The tripartite example has a simple explanation in terms of the
 survival equations.  We ask whether survival probabilities
at different resources that start near the same reference curve remain
close throughout the continuation.

Write $q_v(x)$ for the survival probability suggested by the
independent-branch model at resource $v$.  In the leading approximation,
a candidate containing $v$ consumes it at priority $x$ if its other
$r-1$ resources survive to $x$.  This suggests the heuristic
\[
 \frac{d}{dx}\log q_v(x)
 \approx-\sum_{A\ni v}\prod_{u\in A\setminus\{v\}}q_u(x).
\]
We compare with the homogeneous profile $Q_D(x)$ from the preceding
subsection by writing
\[
 q_v(x)=Q_D(x)e^{y_v(\tau)},\qquad
 \tau=\frac{\log(1+(r-1)Dx)}{r-1}.
\]
Here $D$ is the typical resource degree.  Since
$Q_D'/Q_D=-DQ_D^{r-1}$ and $d\tau/dx=DQ_D^{r-1}$, the same
approximation gives
\[
 \frac{dy_v}{d\tau}
 \approx 1-\frac1D\sum_{A\ni v}
       \exp\!\left(\sum_{u\in A\setminus\{v\}}y_u\right).
\]
Expanding to first order in the relative deviations yields
\[
 \frac{dy_v}{d\tau}
 \approx 1-\frac{d_v}{D}
       -\frac1D\sum_{A\ni v}\sum_{u\in A\setminus\{v\}}y_u.
\]
The double sum suggests an operator on resources. 
Writing $M$ for the number of candidates, we define
\[
 (\mathsf T h)(v)
 =\frac1{(r-1)d_v}\sum_{A\ni v}
       \sum_{u\in A\setminus\{v\}}h(u),\qquad
 \pi(v)=\frac{d_v}{rM},
\]
This reversible Markov operator $\mathsf T$
chooses a uniform candidate through $v$ 
and then a different resource of that candidate.  
We then rewrite the heuristic as
\[
 \frac{dy_v}{d\tau}\approx
 1-\frac{d_v}{D}
 -(r-1)\frac{d_v}{D}(\mathsf T y)(v).
\]
This suggests $-(r-1)\mathsf T$ as the principal linear part when
the degrees are close to $D$, with $1-d_v/D$ as a forcing term.
If $\mathsf T h=\lambda h$ then $h$ evolves under the homogeneous equation as
$e^{-(r-1)\lambda\tau}h$.  Positive eigenvalues of $\mathsf T$
make deviations decay; negative ones make them grow.  
Initially every resource survives, so $q_v(0)=Q_D(0)=1$ and
$y(0)=0$.  A growing eigenmode cannot by itself create a
discrepancy: degree irregularity or another error must first
produce one.  For the exact equation with no linearisation,
we collect everything except the proposed linear part in a residual
$\mathcal E$.  The evolution can then be written as the identity
\[
 y'(\tau)=-(r-1)\mathsf T y(\tau)+\mathcal E(\tau),
 \qquad y(0)=0.
\]
Solving the linear part of this differential equation gives
\[
 y(\tau)=\int_0^\tau
 e^{-(r-1)(\tau-s)\mathsf T}\mathcal E(s)\,ds.
\]
Here $\mathcal E$ may itself depend on $y$; the formula shows how
its contributions are amplified, but does not by itself bound them.
The logarithmic clock runs until
\[
 T_D=\frac{\log(1+(r-1)D)}{r-1}=\Theta_r(\log D).
\]
An exponentially growing solution can therefore amplify a small
error by a power of $D$.  We must control this possible amplification
throughout the remaining interval.

The tripartite obstruction has exactly such an unstable mode.  Let
$h$ take values $1,1,-2$ on its three resource classes.  Every
candidate contains one resource from each class, so
$\sum_{u\in A}h(u)=0$ for every candidate $A$.  Consequently,
for a resource $v$,
\[
 (\mathsf T h)(v)
 =\frac1{2d_v}\sum_{A\ni v}
       \sum_{u\in A\setminus\{v\}}h(u)
 =-\frac12h(v).
\]
This remains true after the random construction and deletion of
short cycles, since every retained candidate is still tripartite.
Thus $e^{-2\tau\mathsf T}h=e^\tau h$.  At terminal time
$\tau=T_D=\tfrac12\log(1+2D)$ the amplification is
$\sqrt{1+2D}$: a small deviation created by forcing can be
magnified by a power of $D$ in the linearised dynamics.
To see whether this growth actually occurs, consider
\[
 \xi(\tau):=\frac{\langle y(\tau),h\rangle_\pi}
                       {\langle h,h\rangle_\pi},
 \qquad
 \varepsilon_h(\tau):=
     \frac{\langle\mathcal E(\tau),h\rangle_\pi}
          {\langle h,h\rangle_\pi}.
\]
As $\mathsf T$ is self-adjoint in $L^2(\pi)$ and
$\mathsf T h=-h/2$, projecting the equation for $y$ and solving gives
\[
 \xi(\tau)=\int_0^\tau e^{\tau-s}
                          \varepsilon_h(s)\,ds.
\]
So it is the forcing that first creates a component of $y$ in this
direction; the unstable evolution then amplifies it.  In particular,
$\xi'(0)=\varepsilon_h(0)$, since $\xi(0)=0$.

The degree imbalance supplies such forcing.  At time zero the exact
survival equations give $y'_v(0)=1-d_v/D$.  In the original complete
tripartite example, take $D=ab$; the degrees on the first two classes
are $ab$ and on the third are $a^2$.  Thus $y'(0)$ is zero on the first
two classes and equals $-(a-b)/b$ on the third.  Each class has
$\pi$-mass $1/3$, since every candidate contributes one incidence
to each class.  Consequently,
\[
\xi'(0)=
 \frac{\langle y'(0),h\rangle_\pi}
      {\langle h,h\rangle_\pi}
 =\frac{a-b}{3b}.
\]
A forcing constantly equal to this initial value
would give $\xi(\tau)=\frac{a-b}{3b}(e^\tau-1)$.
This does not show that the forcing persists,
but it diagnoses the failure of the proof,
while the tripartite example shows that
it is a failure of the conclusion, not just the proof.
 
\subsection{From stable tree predictions to the actual random matching} \label{subsec:actual}

Two questions remain.  Can we keep the independent-branch prediction
accurate until termination?  And, even if we can, why should it
predict the actual random matching, where repeated appearances of one
candidate use the same priority? 

\paragraph{Stability over the whole remaining time.}
The exact equations constructed in Section~\ref{sec:cover} have the
linear part derived above.  Section~\ref{sec:stability} proves that
the remaining terms consist of degree error
$\delta_D:=\max_v|d_v/D-1|$, a correction of order $\log D/D$,
and a quadratic error while deviations remain small.
Our stability assumption a bound, for an appropriate $q>2$,
on the operator norm $\|e^{tA}\|_{q\to q}$, which is
the largest amplification of the $L^q$ norm over all initial data.

We assume a bound of the form
\[
 \|e^{-(r-1)t\mathsf T}\|_{L^q(\pi)\to L^q(\pi)}
 \le C(1+t)^L.
\]
Writing $U(\tau)=\sup_{s\le\tau}\|y(s)\|_{L^q(\pi)}$,
the analysis above implies
\[
 U(\tau)\lesssim
 (1+\log D)^{L+1}
 \left(\delta_D+\frac{\log D}{D}
        +N^{1/q}U(\tau)^2\right).
\]
The factor $N^{1/q}$ arises when an $L^q$ estimate is converted
to a uniform bound over resource types.  A sufficiently large fixed
$q$ makes it affordable at the polynomial handoff, so the quadratic
term cannot drive the deviation out of the small regime.  
The approximation  will be proved first in $L^2$ and transferred to
$L^q$ by interpolation.  The mean survival probability in
the independent model satisfies
\[
 \bar q:=\frac1N\sum_v q_v(1)=(1+o(1))Q_D(1).
\]
Thus polynomial growth is enough; 
no decay estimate is needed.

\paragraph{Returning to the original process.}
We consider the backwards query from a resources. In the
actual hypergraph a candidate may be encountered along different
branches, with the same priority each time; the independent incidence
tree treats these as distinct occurrences with independent priorities.
We must show that such repetitions are rare.
We consider the first repeated candidate
and the unique path between its two occurences
in the unfolded  resource--candidate incidence tree.
If this path is short then  its image in the actual hypergraph 
is a rare short incidence cycle. If it is long and avoids short cycles
then the girth property gives a counting saving in the union bound,
which combines with the small probability of visiting such a path
to give the required bound.

\subsection{From triangle mixing to stability} \label{subsec:mixing}

It remains to justify the stability assumption.
The key extra input is quasirandomness of the triangles in $G$,
which we will express analytically in terms of mixing of link walks.
Fix a vertex $u$ of the handoff graph $G$, and consider its \emph{link}:
the graph on $N(u)$ in which $v,w$ are adjacent when $uvw$ is a
triangle of $G$.  Write $d_{uv}=|N(u)\cap N(v)|$.  The link walk and
its stationary law are
\[
 (K_ug)(v)=\frac1{d_{uv}}\sum_{w\in N(u)\cap N(v)}g(w),
 \qquad
 \sigma_u(v)=\frac{d_{uv}}{\sum_{x\in N(u)}d_{ux}}.
\]
Let $\Pi_u$ denote averaging with respect to $\sigma_u$.
The handoff gives us precise estimates on counts for chains of triangles,
which translate into estimates of the form
\[
 \sup_v\sum_{w\in N(u)}
 |K_u^\ell(v,w)-\sigma_u(w)|\le n^{-c_\eta}
\]
Since $K_u$ is reversible for $\sigma_u$, 
by duality $\|K_u^\ell-\Pi_u\|_{2\to2} \le n^{-c_\eta}$,
so  $\|K_u-\Pi_u\|_{2\to2} \le n^{-c_\eta/\ell}$.

Why is this useful for the resource walk?  Let $M$ be the number of
triangles, and give its resource edges the stationary law
$\pi(uv)=d_{uv}/(3M)$.  Starting from $uv$, the resource walk
chooses a triangle $uvw$ and moves to $uw$ or $vw$ with equal
probabilities.  For an edge function $f$, put $f_u(v)=f(uv)$.
Conditioning on the retained endpoint gives 
\[
 (\mathsf Tf)(uv)
 =\frac12\bigl((K_uf_u)(v)+(K_vf_v)(u)\bigr).
\]
We define the \emph{star-averaging operator}
by replacing link steps by their stationary law:
\[
 (\mathsf Sf)(uv)
 =\frac12\bigl((\Pi_uf_u)(v)+(\Pi_vf_v)(u)\bigr).
\]
The conditional distribution of $v$ given endpoint $u$ under the
resource law $\pi$ is exactly $\sigma_u$.  Jensen's inequality,
followed by summing in these conditional laws, therefore gives
\[
 \|\mathsf T-\mathsf S\|_{L^2(\pi)\to L^2(\pi)}
 \le n^{-c'_\eta}
\]
for some $c'_\eta>0$. As  $\mathsf T$ and $\mathsf S$ are Markov contractions 
on $L^\infty(\pi)$ we have $\|\mathsf T-\mathsf S\|_{\infty\to\infty} \le 2$,
so interpolation gives $q \to q$ bounds also with polynomial decay.
Choosing a large fixed $q$ makes the $N^{1/q}$ loss affordable 
while retaining a polynomially small operator error.

The stability problem has now been reduced to $\mathsf S$.
Indeed, suppose we can prove
$\|e^{-2t\mathsf S}\|_{q\to q}\le1+2t$.  Duhamel's formula,
with $\varepsilon_q$ as above, gives
\[
 e^{-2t\mathsf T}=e^{-2t\mathsf S}
 -2\int_0^t e^{-2(t-s)\mathsf S}
       (\mathsf T-\mathsf S)e^{-2s\mathsf T}\,ds.
\]
Taking norms and applying Gronwall's inequality yields
\[
 \|e^{-2t\mathsf T}\|_{q\to q}
 \le(1+2t)\exp\bigl(2\varepsilon_q(t+t^2)\bigr).
\]
The required time interval has length $O(\log D)=O(\log n)$,
so the exponential factor is $1+o(1)$.  It therefore suffices to
prove the claimed $1+2t$ bound for star averaging.  We have reduced
an analytic problem for the actual triangle walk to a simpler operator.

For that calculation, the star operation has a useful factorisation
on \emph{any}  graph $H$ equipped with positive edge weights
$\pi(e)$ summing to one.  We consider only vertices incident to
positive-weight edges.  Sample an edge with law $\pi$ and then
one of its endpoints uniformly.  The resulting vertex law is $\nu(v)=\frac12\sum_{e\ni v}\pi(e)$.
Define the two conditional averages on vertex and edge functions by
\[
 (Cg)(uv)=\frac{g(u)+g(v)}2,
 \qquad
 (C^*h)(v)=
 \frac{\sum_{e\ni v}\pi(e)h(e)}{\sum_{e\ni v}\pi(e)}.
\]
They are adjoint between $L^2(\nu)$ and $L^2(\pi)$, and each is an
$L^q$ contraction by Jensen's inequality.  The star operator is
exactly $\mathsf S=CC^*$.

To see why the negative evolution of $CC^*$ is stable, apply the
conditional averages in the opposite order.  From a vertex $v$,
choose an incident edge with probability proportional to its weight,
then choose one endpoint of that edge uniformly.  We return to $v$
with probability $1/2$; otherwise we make one step of the vertex
Markov walk
\[
 (Pg)(v)=
 \frac{\sum_{u:\,uv\in E(H)}\pi(uv)g(u)}
      {\sum_{u:\,uv\in E(H)}\pi(uv)}.
\]
Consequently
\[
2C^*C=I+P.
\]
Here $P$ is reversible for $\nu$, and, being Markov, its powers
are contractions on every $L^q(\nu)$.

Let $y(t)=e^{-2tCC^*}y_0$ for an arbitrary initial edge function
$y_0$, and set $z(t)=C^*y(t)$.  Then
\[
 z'(t)=-2C^*Cz(t)=-(I+P)z(t).
\]
This evolution for $z$ is closed, and crucially, it is contractive:
\[
 \|e^{-t(I+P)}\|_{q\to q}
 \le e^{-t}\sum_{j=0}^\infty\frac{t^j}{j!}
         \|P^j\|_{q\to q}\le1.
\]
Hence $\|z(t)\|_{L^q(\nu)}\le\|y_0\|_{L^q(\pi)}$.
Integrating $y'=-2Cz$ we thus justify the stability assumption
\[
 \|e^{-2tCC^*}\|_{L^q(\pi)\to L^q(\pi)}\le1+2t
 \qquad(t\ge0).
\]

\part{Random greedy matching}

In this part we state and prove the random greedy matching theorem motivated in Section~\ref{sec:motivation}. 
In Section \ref{sec:cover} we state the theorem, describe an independent model that captures the tree heuristic,
and set up a coupling argument for approximating the actual model by the independent model.
This reduces the proof of the theorem to two tasks: 
in Section \ref{sec:stability} we use the stability assumption to analyse the independent model,
then in Section \ref{sec:collision} we prove the bounds on collision probabilities needed to justify the coupling.

\section{Random greedy matching and independent covers} \label{sec:cover}

\subsection{The random greedy matching theorem}

We now state the random greedy matching theorem motivated in
Section~\ref{sec:motivation}.  Let $r\ge3$ be fixed and let $\Gamma$
be a finite simple $r$-uniform hypergraph, with resource vertices and
candidate edges.  Give candidates independent continuous priorities and
accept each in increasing order when it is disjoint from previously
accepted candidates.  Write $Z_\Gamma$ for the number of uncovered
resources, and put
\[
 N=|V(\Gamma)|,\qquad M=|E(\Gamma)|.
\]
For the resource degrees $d_v$, define the degree-weighted law and the
resource walk from Section~\ref{sec:motivation} by
\[
 \pi(v)=\frac{d_v}{rM},\qquad
 (\mathsf Th)(v)=\frac1{(r-1)d_v}
       \sum_{A\ni v}\sum_{u\in A\setminus\{v\}}h(u).
\]
These expressions are well defined when all resource degrees are
positive, as ensured by the near-regularity hypothesis below.
For $R\ge4$, let $\mathcal I(\Gamma)$ be the bipartite incidence graph
and set
\[
 \chi_R(\Gamma):=\frac1M
 |\{A\in E(\Gamma): A\text{ belongs to a simple incidence
 cycle of length at most }R\}|.
\]
Thus $\chi_R$ counts candidates lying on short cycles, not merely
candidates near them.

\begin{theorem}[Random greedy matching]
\label{thm:fixed-host}
With the notation above, for every fixed $r\ge3$ there are constants
$A_r,B_r,R_r<\infty$ with the following property.
Let $\Gamma=\Gamma_n$ be a sequence of simple $r$-uniform
hypergraphs and let $D=D_n\to\infty$.  Put
\[
 \delta_n:=\max_{v\in V(\Gamma)}
       \left|\frac{d_v}{D}-1\right|,
 \qquad
 T_D:=\frac{\log(1+(r-1)D)}{r-1}.
\]
Suppose that for fixed $q>2$ and $C,L,K<\infty$ the following
conditions hold:
\begin{enumerate}[label=\textup{(\roman*)}]
\item \emph{Size and stability:}
\[
 N\le D^K,
 \qquad
 \|e^{-(r-1)t\mathsf T}\|_{L^q(\pi)\to L^q(\pi)}
 \le C(1+t)^L\quad(0\le t\le T_D).
\]
\item \emph{Near-regularity and small stability error:}
$\delta_n\to0$ and
\[
 N^{1/q}\left(\delta_n+\frac{\log D}{D}\right)
 (1+\log D)^{2L+2}=o(1).
\]
\item \emph{Small collision errors:}
\[
 D^{A_r}\chi_{R_r}(\Gamma)+\frac{D^{B_r}}{M}
 =o\!\left(D^{-2/(r-1)}\right).
\]
\end{enumerate}
Then, writing $Q_D(1)=(1+(r-1)D)^{-1/(r-1)}$, we have
\[
 \E\left[\left(\frac{Z_\Gamma}{NQ_D(1)}-1\right)^2\right]=o(1).
\]
In particular, $Z_\Gamma/(NQ_D(1))\to1$ in probability.
\end{theorem}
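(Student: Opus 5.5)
The plan is to split the $L^2$ statement into a first-moment estimate and a second-moment estimate. Write $Z_\Gamma=\sum_v \mathbf 1[v\text{ uncovered}]$. We aim to show
\[
 \E Z_\Gamma=(1+o(1))NQ_D(1),\qquad \E Z_\Gamma^2=(1+o(1))(NQ_D(1))^2 .
\]
Together these give $\E[(Z_\Gamma/(NQ_D(1))-1)^2]=o(1)$.

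For a single resource $v$, survival is decided by the backwards query of Section~\ref{sec:motivation}. We would define an \emph{independent model}. Unfold the incidence graph $\mathcal I(\Gamma)$ from $v$ into its universal cover, give every occurrence of a candidate an independent uniform priority, and run the same backwards query on the cover. Let $q_v(x)$ be the probability that the root survives to priority $x$ in this model. The branches below distinct candidates at a node are independent, so exact recursive equations hold. For a branch hanging at $v$ through a candidate $A$ we get
\[
 \frac{d}{dx}\log f_{v,A}(x)=-\prod_{u\in A\setminus\{v\}}\ \prod_{B\ni u,\,B\ne A}f_{u,B}(x),
\]
and $q_v=\prod_{A\ni v}f_{v,A}$. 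These equations are exact on the cover, though not closed in $q$ alone.

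\textbf{Stability step.} We change variables to $y_v(\tau)$ with $q_v=Q_De^{y_v}$, on the logarithmic clock $\tau$. Comparing $f_{u,B}$ with $q_u$ costs a factor $f_{u,A}^{-1}$, which contributes an $O(\log D/D)$ correction. We then write the system as
\[
 y'=-(r-1)\mathsf Ty+\mathcal E,
\]
where $\|\mathcal E\|\lesssim \delta_n+\log D/D+\|y\|_\infty^2$. Duhamel's formula and hypothesis (i) give
\[
 U(\tau)\lesssim (1+\log D)^{L+1}\bigl(\delta_n+\log D/D+N^{1/q}U^2\bigr),
\]
using $\|y\|_\infty\le (N\max_v\pi(v)^{-1}\cdots)^{1/q}\|y\|_q\lesssim N^{1/q}\|y\|_q$ by near-regularity. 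A continuity (bootstrap) argument on $[0,T_D]$ then applies. Hypothesis (ii), with the exponent $2L+2$ absorbing the square of the prefactor, keeps $U$ small throughout. It gives $\sup_v|y_v(T_D)|=o(1)$, hence $q_v(1)=(1+o(1))Q_D(1)$ uniformly in $v$.

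\textbf{Coupling step.} We couple the actual query from $v$ with the independent one. Give each actual candidate one priority, and let all its occurrences in the cover share it until the first time the explored query tree contains two occurrences of the same candidate. The outcomes agree unless such a collision occurs within the explored part. The explored part is finite because the query stops at the first accepted earlier candidate.

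We bound the collision probability as follows. Take the path in the cover between the two occurrences. If its length is at most $R_r$, its projection is a short incidence cycle. Summing over $v$, only $O(D^{A_r})\cdot\chi_{R_r}M$ roots are affected. The factor $D^{A_r}$ counts the query trees of bounded depth meeting a given candidate, weighted by the probability that the query explores depth $\ell$ along a decreasing priority chain, roughly $D^\ell/\ell!$ times the truncation. If the path is longer than $R_r$, the projected closed walk has no short cycles, so it is a long cycle. Counting long cycles through $v$ gains a factor $1/M$ against $D^{B_r}$ exploration paths. Here we need exploration length to decay: the probability that the query reaches depth $\ell$ is at most $(C D\cdot x)^\ell/\ell!$, and long chains are superpolynomially unlikely. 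Hypothesis (iii) makes the total error $o(D^{-2/(r-1)})\cdot N$. This is $o(NQ_D(1)^2)$, which is exactly the accuracy needed for the second moment.

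\textbf{Second moment.} For pairs $v,w$ we run both queries jointly. Unless the two explored trees intersect or collide, the independent model factorizes, giving $q_vq_w$. The probability of intersection for a typical pair is again bounded by the same path-counting, now with paths from $v$ to $w$. Summing over pairs gives error $o(N^2Q_D(1)^2)$.

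The main obstacle we expect is the coupling/collision bound: controlling the size of backwards-query trees precisely enough that the counting of long paths really yields $D^{B_r}/M$ and short cycles yield $D^{A_r}\chi_{R_r}$. This is where the constants $A_r,B_r,R_r$ are fixed. We would first try to bound exploration by the expected number of decreasing-priority alternating paths, $\sum_\ell ((r-1)D)^\ell/\ell!$ with priorities in $[0,1]$, truncated at a large depth.
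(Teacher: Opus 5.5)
Your first/second-moment split, the stability step (logarithmic clock, Duhamel, the $N^{1/q}$ loss, and the bootstrap under (ii)) and the first-label coupling all match the paper. There is one slip. Each occurrence carries a single uniform priority, so the exact cavity equation is $f_{A\to v}'=-\prod_{u\in A\setminus\{v\}}q_{u\to A}$, not an equation for $(\log f_{A\to v})'$. The discrepancy is a factor $f_{A\to v}^{-1}=1+O(\log D/D)$, so it is harmless, but your system is not exact.

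The genuine gap is the collision step, which is where $A_r,B_r,R_r$ come from. Your count over decreasing-priority chains sums to $\sum_\ell((r-1)D)^\ell/\ell!=e^{(r-1)D}$, and your depth bound $(CD)^\ell/\ell!$ exceeds $1$ for all $\ell\lesssim CD$. So long chains are \emph{not} superpolynomially unlikely on this count, and no fixed exponent can come out of it.

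The missing idea is that an entry at priority $t$ is invoked only if every earlier entry in the merged forward scanner returned false, and in the cover this is a product of cavity survival probabilities. The paper feeds in the output of the stability step, $q_v\le\frac54Q_D$ and $f_{A\to v}\ge\frac34$. This bounds each step by $w(t)=c_rQ_D(t)^{r-1}=c_r/(1+(r-1)Dt)$. A prescribed path of $j$ successive invocations is then followed with probability at most $\mu_D^j/j!$, where $\mu_D=\int_0^1w=O(\log D/D)$. Against at most $C_r(K_rB)^j$ paths this sums to $e^{K_rB\mu_D}=R_0^{O_r(1)}$ instead of $e^{\Theta(D)}$. This gives the two-path visitation bound and the uniform exposure bound $p_A\le C_rR_0^{H_r}/N$ for each candidate type at \emph{any} depth. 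The short-cycle case needs exactly this: a short connecting path between two occurrences does not put them at bounded depth from the root, so truncating the query at bounded depth does not control it.

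Your treatment of long collisions also uses the wrong mechanism. High girth is a local condition and gives no factor $1/M$ for a long collision inside one query. For example, taking disjoint copies of a fixed high-girth hypergraph multiplies $M$ without changing any query. The paper instead proceeds in three steps.
\begin{enumerate}
\item It restricts to a first-collision witness, in which all other candidate and resource types are distinct; this uses that candidate repetitions precede resource repetitions.
\item With $R=4\kappa+8$, the last $\kappa$ invocation steps before the later occurrence form a block whose interior is determined by its two boundary candidate types. Two different fillings would close an incidence cycle of length at most $4\kappa$ through a type on the selected path. This anchored count saves $B^{-\kappa}$, giving $p_{\mathrm{rep}}\le C_rR_0^{\alpha_r}(\chi_R+B^{-\kappa})$.
\item $\kappa_r$ is chosen beyond $\alpha_r+2/(r-1)+2$, so that this error is $o(D^{-2/(r-1)})$.
\end{enumerate}
The term $D^{B_r}/M$ in (iii) is used only for the two-root error. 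By independence of the two covers, $p_{\mathrm{shared}}\le\sum_Ap_A^2\le C_rMR_0^{2H_r}/N^2\le C_rR_0^{2H_r+2}/M$, which again rests on the uniform exposure bound.
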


The proof has two distinct tasks.  We first approximate the survival
probability of each resource by solving a model with independent
candidate occurrences.  We must then justify replacing the actual
actual random matching by that independent model, whose different occurrences
of the same  candidate have different priorities.  This section
constructs both backwards queries, derives the independent-cover
equations, and proves a coupling that reduces the first and second
moments to these two tasks.  Section~\ref{sec:stability} estimates the
independent survival law, while Section~\ref{sec:collision} controls
repeated candidate types in finite-query explorations.

\subsection{The exact finite backwards test}

Fix a resource $v\in V(\Gamma)$, and let $X_v$ indicate that $v$
is uncovered at the end of greedy matching with independent uniform
candidate priorities $(U_A:A\in E(\Gamma))$.  We first construct an
exact backwards query rooted at $v$ to determine $X_v$.  A resource is
\emph{free before $t$} if no accepted candidate with priority less than
$t$ contains it.  The root query asks whether $v$ is free before $1$.
To define it recursively, we allow a call with a general focus set $S$
and threshold $t$, asking whether every resource of $S$ is free before
$t$.  An incoming candidate $P$ is omitted from that call's scanner,
but not from those of its descendants.

Fix total orders on $V(\Gamma)$ and $E(\Gamma)$ and order the 
resource--candidate incidences $(u,A)$, $u\in A$, lexicographically by
resource and then candidate.  For a nonempty $S\subseteq V(\Gamma)$,
$t\in[0,1]$ and
$P\in E(\Gamma)\cup\{\bot\}$, define $\mathsf{Test}(S,t;P)$, where
$P=\bot$ at the root and otherwise $U_P=t$.  Its scanner consists of the
incidences
\[
 \mathcal L(S;P)
 :=\{(u,A):u\in S,\ A\in E(\Gamma),\ u\in A,\ A\ne P\}.
\]
The scanner is a list of incidences, not of distinct candidates.  A
candidate meeting several resources of $S$ contributes one entry for each
such resource.  The call follows three steps:
\begin{enumerate}[label=\textup{(\arabic*)}]
\item Reveal the priority of every scanner entry in the fixed incidence
order, even when the priority is above $t$.
\item Sort the entries by priority, breaking ties by the fixed incidence
order, and consider only entries of priority below $t$.
\item For each remaining $(u,A)$ in that order, make the \emph{single}
recursive call
\[
 \mathsf{Test}(A\setminus\{u\},U_A;A).
\]
Return false at the first true recursive call, and true if all such
calls return false.
\end{enumerate}
Distinct candidates have different priorities almost surely, but a
candidate $A$ meeting $S$ twice gives two entries with the \emph{same}
priority $U_A$ and different recursive focus sets.  The fixed tie-break makes
the evaluation unambiguous, without affecting its Boolean answer.

The same candidate retains its priority $U_A$ if it reappears in
another branch.  Every recursive threshold decreases, so the test terminates.

\begin{lemma}[Correctness of the finite query]
\label{lem:finite-query}
Every call $\mathsf{Test}(S,t;P)$, with $P=\bot$ or $U_P=t$,
returns true exactly when all resources in $S$ are free before $t$.
In particular, $X_v=\mathsf{Test}(\{v\},1;\bot)$.
\end{lemma}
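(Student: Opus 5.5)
We argue by strong induction on the threshold $t$. Almost surely all priorities $U_A$ are distinct and lie in $(0,1)$; we work on this event throughout. Every recursive call made by $\mathsf{Test}(S,t;P)$ has threshold $U_A<t$ for some candidate $A$. Since $E(\Gamma)$ is finite, the thresholds that can occur form a finite set $\{1\}\cup\{U_A\}$. So it suffices to verify the claim for a call with threshold $t$, assuming it holds for every call $\mathsf{Test}(S',t';P')$ with $t'<t$ and $U_{P'}=t'$. The base case is the smallest possible threshold: then the scanner contains no entry of priority below $t$, the call returns true, and indeed no candidate of priority below $t$ exists, so every resource is free before $t$.

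For the inductive step, we first characterise freeness. A resource $u$ is free before $t$ exactly when no accepted candidate $A\ni u$ has $U_A<t$. A candidate $A$ is accepted exactly when every resource of $A$ is free before $U_A$. Hence every resource of $S$ is free before $t$ if and only if there is no incidence $(u,A)$ with $u\in S$, $u\in A$, $U_A<t$, and all of $A$ free before $U_A$.

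Two reductions bring this to the form the test checks.

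First, since $u$ is trivially free before $U_A$ with respect to candidates other than $A$ when $u\in S$ is free before $t>U_A$ (monotonicity of freeness in the threshold), one might worry about the resource $u$ itself. The clean route is to note that for $u\in A$, the statement ``all of $A$ is free before $U_A$'' is equivalent to ``all of $A\setminus\{u\}$ is free before $U_A$ and $u$ is free before $U_A$''. We handle the second part by taking the \emph{first} offending entry in priority order.

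Second, we must handle the omission of $P$, where $U_P=t$. The candidate $P$ has priority exactly $t$, not below $t$, so it can never cover a resource before $t$. Its omission from the scanner therefore changes nothing.

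Now let $(u_1,A_1),(u_2,A_2),\dots$ be the scanner entries of priority below $t$, in sorted order. By induction, the call for entry $j$ returns true if and only if $A_j\setminus\{u_j\}$ is free before $U_{A_j}$. This is legitimate because $U_{A_j}<t$ and the incoming candidate of that call is $A_j$ itself.

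Suppose some resource of $S$ is not free before $t$. Take the accepted candidate $A$ with smallest priority among those meeting $S$ with $U_A<t$, and the first entry $(u,A)$ for it. Then all of $A$ is free before $U_A$, so that call returns true.

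Conversely, consider the first entry $j$ whose call returns true. For every earlier entry, either $U_{A_i}<U_{A_j}$ and $A_i$ was rejected, or $A_i=A_j$, which is a tied entry. In the first case, $u_j$ is not covered before $U_{A_j}$ by any candidate meeting $S$ of smaller priority: any such covering candidate would have been accepted and would have produced an earlier true call, by the forward direction applied at threshold $U_{A_j}$. Hence $u_j$ is free before $U_{A_j}$, so $A_j$ is accepted and $S$ is not free before $t$.

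This case analysis is the main obstacle. Freeness of $u_j$ before $U_{A_j}$ involves candidates through $u_j$ that may not have been scanned before entry $j$. The scanner contains every incidence at $u_j\in S$ except $P$, so all such candidates appear. We need to check carefully that the return-at-first-true rule, together with the ordering, gives exactly the minimal accepted candidate. A cleaner route is to show directly that the call returns true if and only if no scanned $A$ with $U_A<t$ has $A\setminus\{u\}$ free before $U_A$, and that this is equivalent to freeness, since the minimal-priority such $A$ has $u$ free as well. Finally, $X_v=\mathsf{Test}(\{v\},1;\bot)$ is the case $S=\{v\}$, $t=1$: being free before $1$ means being uncovered at the end.
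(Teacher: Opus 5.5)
Your proof is correct and follows the paper's induction on the threshold, including the observation that the omitted $P$ has priority exactly $t$ and the argument that an accepted $A$ meeting $S$ below $t$ gives a true recursive call. The only difference is the other implication. You take the first true entry and use minimality to show $u_j$ is free before $U_{A_j}$. The paper instead assumes all of $S$ is free before $t$; then $u$ is free before $U_A<t$ by monotonicity, so a true call at $(u,A)$ would force $A$ to be accepted and cover $u$ before $t$. This shortcut also shows that your closing worry about the first-true rule and the ordering is unnecessary, since the Boolean answer depends only on whether some entry below $t$ has a true call.
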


\begin{proof}
Induct on the number of  candidate priorities below $t$;
every recursive call has a smaller threshold.  If all resources in
$S$ are free before $t$, a true recursive call at an entry $(u,A)$
with $U_A<t$ would say, by induction, that the other resources of $A$
are free before $U_A$.  Since $u$ is free as well, $A$ would have been
accepted, a contradiction.  Thus the test returns true.

Conversely, if a resource of $S$ is covered before $t$, choose an
accepted candidate $A$ meeting $S$ with $U_A<t$ and $u\in A\cap S$.
The entry $(u,A)$ is in the scanner: an omitted incoming candidate
has priority $t$, not less than $t$.  All resources of
$A\setminus\{u\}$ were free before $U_A$, so induction says that
this entry's recursive call returns true.  The test therefore returns
false.  The argument also covers repeated entries of one candidate
in a merged scanner.
\end{proof}

\subsection{Independent occurrences and the typed incidence cover}

We now construct a second query, rooted at the \emph{same} resource
type $v$.  Unfold the resource--candidate incidence graph of $\Gamma$
into the bipartite \emph{typed incidence cover} $\mathcal T_v$,
rooted at a resource occurrence of type $v$.  A resource occurrence of
type $u$ has one candidate child occurrence of type $A$ for every
$A\ni u$, apart from its incoming candidate.  A candidate occurrence
of type $A$ reached through a resource of type $u$ has $r-1$ forward
resource children, of the distinct types in $A\setminus\{u\}$.
Occurrences are distinct even when their  types coincide.

Give all candidate occurrences independent uniform priorities and run
the same backwards query from the root at threshold $1$.  In
particular, invoking an occurrence of type $A$ through type $u$
produces \emph{one} recursive test of its $r-1$ forward resources,
using their merged scanner.  Let $Y_v$ be the answer.  Thus $X_v$
and $Y_v$ concern the same  resource type, but $X_v$ uses one
priority $U_A$ for every appearance of  candidate $A$, whereas
distinct cover occurrences have independent priorities.

A candidate occurrence is \emph{exposed} when a scanner reads its
priority and \emph{invoked} when its recursive test is evaluated.
Occurrences whose priorities exceed a threshold, or whose tests are
skipped after an earlier true answer, are exposed but not invoked.
The forward subtrees of distinct cover occurrences are disjoint.

We use the fixed resource and candidate orders from the finite query.
Whenever an invoked candidate produces its forward resource
occurrences, register all their births in resource-type order before
exposing any scanner entries.  Expose those entries in
resource-then-candidate order, before making recursive calls in increasing
priority order (with incidence order breaking ties).  The same
convention applies to the finite query.  These choices determine the
order of births and exposures, without changing the Boolean answers.

For example, take $r=3$ and candidates $A=\{u,v,w\}$ and
$B=\{v,w,x\}$.  If $A$ is invoked through $u$, its merged scanner
contains both $(v,B)$ and $(w,B)$.  In the finite query these entries
have the same priority $U_B$ but different recursive tests.  Both entries
are exposed even if $U_B>U_A$, when neither child is invoked.  In the
independent cover they are two formal candidate occurrences with fresh,
independent priorities, both labelled by the  type $B$.
Figure~\ref{fig:scanner-collision} shows the two exposed entries and the
 incidence cycle $A,v,B,w,A$.

\begin{figure}[tb]
\centering
\begin{minipage}[c]{.47\linewidth}
\centering
\begin{tikzpicture}[x=.9cm,y=.77cm,
 resource/.style={circle,draw,inner sep=1.3pt,minimum size=6mm},
 candidate/.style={rectangle,draw,rounded corners=1pt,inner sep=3pt,minimum width=8mm}]
 \node[resource] (u) at (0,2.6) {$u$};
 \node[candidate] (a) at (0,6.6) {$A$};
 \node[resource] (v) at (-1.4,1.35) {$v$};
 \node[resource] (w) at (1.4,1.35) {$w$};
 \node[candidate] (b) at (0,.15) {$B$};
 \node[resource] (x) at (0,-.85) {$x$};
 \draw (u)--(a)--(v)--(b)--(w)--(a);
 \draw (b)--(x);
\end{tikzpicture}

\small Finite incidence graph
\end{minipage}\hfill
\begin{minipage}[c]{.47\linewidth}
\centering
\begin{tikzpicture}[x=.9cm,y=.77cm,
 resource/.style={circle,draw,inner sep=1.3pt,minimum size=6mm},
 candidate/.style={rectangle,draw,rounded corners=1pt,inner sep=3pt,minimum width=8mm}]
 \node[resource] (u) at (0,2.6) {$u$};
 \node[candidate] (a) at (0,6.6) {$A$};
 \node[resource] (v) at (-1.4,1.35) {$v$};
 \node[resource] (w) at (1.4,1.35) {$w$};
 \node[candidate] (bv) at (-1.4,.15) {$B^{(v)}$};
 \node[candidate] (bw) at (1.4,.15) {$B^{(w)}$};
 \draw (u)--(a)--(v);
 \draw (a)--(w);
 \draw[dashed] (v)--(bv);
 \draw[dashed] (w)--(bw);
 \node[font=\scriptsize,align=center] at (0,-.68) {distinct occurrences,\\same  type $B$};
\end{tikzpicture}

\small Independent incidence cover
\end{minipage}
\caption{The same  candidate in two entries of a merged scanner.
On the left, $A,v,B,w,A$ is a four-cycle in the finite incidence graph.
On the right, the solid edges belong to the invoked query through $A$;
dashed edges denote the two candidate occurrences exposed in its forward
scanners, which need not be invoked.  Their  type is $B$ in both
cases, but their cover priorities are independent.}
\label{fig:scanner-collision}
\end{figure}

\begin{lemma}[Finite determination]
\label{lem:finite-determination}
For every typed incidence cover, every priority horizon $x\in[0,1]$, and every
fixed finite set of nodes, all decisions concerning those nodes before time
$x$ are determined by a finite random neighbourhood almost surely.
\end{lemma}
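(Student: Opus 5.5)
The plan is to reduce finite determination to a subcritical branching estimate in which the priority horizon plays the role of a time parameter. Write $\Delta:=\max_v d_v<\infty$; this is finite because $\Gamma$ is a finite hypergraph. In $\mathcal T_v$ every resource occurrence then has at most $\Delta$ candidate children, and every candidate occurrence has exactly $r-1$ resource children. Although $\mathcal T_v$ is infinite, it is locally finite.

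The first step is to say what "decisions concerning a node before time $x$" depend on. A node is either a candidate occurrence together with the test of its forward resources, or a resource occurrence asking to be free before some threshold below $x$. Unwinding the recursive definition of $\mathsf{Test}$, such a decision is determined by the priorities of the following set: all candidate occurrences reachable from the node along \emph{decreasing chains}. These are forward paths $B_1,B_2,\dots$ in the cover, with $x>U_{B_1}>U_{B_2}>\cdots$, where each $B_{j+1}$ lies in the forward subtree of $B_j$ at depth one (through one resource). The scanners also expose the priorities of the immediate children of these candidates, but this adds only finitely many further values. So the exploration relevant to the node is contained in the closed "decreasing-path cluster" of depth-one candidate steps, and it suffices to show that this cluster is almost surely finite.

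The second step bounds the cluster by a first-moment computation. The number of forward candidate paths of length $k$ from a fixed node is at most $((r-1)\Delta)^k$. Along any single path the priorities are independent uniforms, so the probability that $k$ of them all lie below $x$ and are decreasing is $x^k/k!$. Hence the expected number of decreasing chains of length $k$ is at most $((r-1)\Delta x)^k/k!$. This is summable in $k$, and its tail tends to $0$.

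The third step concludes. Since the expected number of chains of length $k$ tends to $0$, almost surely there is some $k$ with no decreasing chain of length $k$. By local finiteness and König's lemma, the cluster is then a finite tree. Together with the finitely many exposed children, this gives a finite random neighbourhood determining all the decisions. For a fixed finite set of nodes, take the union of these neighbourhoods, which is still finite. Measurability, that is, showing the neighbourhood is a stopping-set determined by the priorities inside it, follows because the exploration reads priorities in the fixed order set out above.

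The only delicate point is the first step. One must verify that the recursion never inspects a candidate that is not reached by a strictly decreasing chain. This holds because calls are made only for entries with priority below the current threshold, and each recursive call uses that entry's priority as its new threshold. Given this, the branching bound is routine, and it works for every $x\le1$ without any smallness assumption because of the factorial.
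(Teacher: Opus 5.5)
Your proof is correct and follows essentially the same route as the paper. The paper bounds the number of candidate chains by bounded branching in the cover, notes that such a chain can be a dependency chain only if its independent priorities decrease (probability $1/(\ell+1)!$), and lets the factorial kill the exponential count; your additions (the horizon factor $x^k$, the exposed-but-uninvoked children, König's lemma and the union over the finite node set) only spell out details the paper leaves implicit.
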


\begin{proof}
Each candidate occurrence has at most $B$ candidate grandchildren,
for some finite bound $B$.  A prescribed chain of $\ell+1$ candidate
occurrences can be a dependency chain only if their independent
priorities decrease, with probability $1/(\ell+1)!$.  There are at
most $B^\ell$ such chains starting at any fixed occurrence, and
$B^\ell/(\ell+1)!\to0$.  Thus every fixed finite collection of decisions has a finite
dependency neighbourhood almost surely.
\end{proof}

We truncate at depth $h$, declaring boundary resources free.
Lemma~\ref{lem:finite-determination} gives almost-sure stabilisation of
each fixed decision as $h\to\infty$.  In particular, a test
attached to an exposed but uninvoked occurrence has a well-defined answer.

\subsection{Coupling and cavity survival probabilities}
\label{sec:transfer}

Fix the same root type $v$ in the two queries above.  We shall put
the actual priorities and the cover-occurrence priorities on one
probability space so that $X_v=Y_v$ unless the cover query exposes
two occurrences of the same  candidate type.  The complete
scanner convention is essential: a priority is read at exposure,
even if the corresponding recursive call is never made.

\begin{lemma}[Candidate repetitions precede resource repetitions]
\label{lem:candidate-first}
Within a complete-scanner query, no resource type can be registered
twice before a candidate type has been exposed twice.
\end{lemma}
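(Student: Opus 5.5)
The plan is to argue by the structure of the cover, tracing how resource occurrences are born. In the typed incidence cover $\mathcal T_v$ every resource occurrence other than the root is born as a forward child of a unique invoked candidate occurrence. Moreover, the forward children of a single candidate occurrence of type $A$, reached through $u$, have the distinct types $A\setminus\{u\}$. So suppose a resource type $w$ is registered twice, at occurrences $w_1$ and $w_2$, while every candidate type has so far been exposed at most once. We will derive a contradiction.

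First, neither occurrence can be the root. If $w_1$ is the root of type $v=w$, then $w_2$ is a forward child of some invoked occurrence of a candidate type $A\ni v$. That occurrence was exposed in a scanner, and it was reached through a resource type $u\neq v$. But the root's own scanner also exposes an entry of type $A$, since $A\ni v$ and the root has no incoming candidate. So $A$ is exposed twice, unless the occurrence producing $w_2$ is itself the root scanner's entry $(v,A)$. That case is impossible, because that entry's forward children have types in $A\setminus\{v\}$, which excludes $v$.

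Now let $A_1,A_2$ be the invoked candidate occurrences that gave birth to $w_1,w_2$. If they are the same occurrence, the children types are distinct, so $w_1,w_2$ have different types, a contradiction. If they are distinct occurrences of the same type, that type has been exposed twice, and we are done. So suppose their types are distinct candidates $A_1\ne A_2$, both containing $w$.

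The key step is to look at the moment the second of $w_1,w_2$ is registered, say $w_2$. By the birth convention, all forward resources of $A_2$ are registered, in resource-type order, before any scanner entries of that call are exposed. Earlier, however, $w_1$ was registered and its forward scanner was exposed, and that scanner contains the entry $(w,A)$ for every $A\ni w$ except the incoming $A_1$. In particular it contains an entry of type $A_2$. Combined with the earlier exposure of the occurrence $A_2$ itself, this gives two exposures of type $A_2$. We must still check the timing: that the exposure of $w_1$'s scanner entry of type $A_2$ precedes the registration of $w_2$. The subtle case is when $w_1$ and $w_2$ are registered in the same batch, or when $w_1$'s scanner has not yet been exposed.

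I expect this ordering subtlety to be the main obstacle. The registration conventions (births registered in batches, then exposures in fixed order, then recursion) must guarantee the following. Whenever an occurrence $A_2$ is invoked, it was already exposed. Any earlier registered resource $w_1$ of the same type $w$ has either had its scanner exposed, or lies in a batch still awaiting exposure. In the latter case $w_1$ shares a batch with a sibling registered before $A_2$ is invoked, and when that batch's scanners are exposed, $w_1$'s scanner includes type $A_2$.

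I would handle this by showing that the exposure of a batch's scanners always precedes any recursive invocation from that batch. Every invoked occurrence therefore sits in an already-exposed scanner, and the registration of $w_2$ (a child of $A_2$) happens only after $A_2$'s exposure and after all pending batches that were created before $A_2$'s invocation have been exposed. In either case the type $A_2$ has been exposed twice by the time $w$ repeats, which proves Lemma~\ref{lem:candidate-first}.
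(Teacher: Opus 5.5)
Your proposal is correct and follows essentially the paper's argument. The earlier occurrence's complete scanner is already exposed, because each batch is exposed immediately on registration and one batch has distinct types, so your ``batch still awaiting exposure'' case never actually arises. Then either the birthing type differs from that occurrence's incoming type, so its scanner already exposed this type, or the two types coincide, so the incoming occurrence is a second exposure. In the distinct-type case you should also say, as you did for the root, that the $A_2$-entry in $w_1$'s scanner is entered through type $w$ and so cannot be the occurrence that produces $w_2$.
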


\begin{proof}
Suppose a second occurrence of resource type $u$ is born from an
invoked candidate occurrence of type $A$.  The complete scanner of
the earlier $u$ has already been exposed: every batch of forward
resources has its scanners exposed before any further candidate is
invoked, and distinct forward resources of one candidate have
distinct types.  Let $P$ be the earlier occurrence's incoming
candidate type, or $\bot$ if it is the root.  If $A\ne P$, the
earlier scanner exposed an occurrence of type $A$; if $A=P$, the
incoming candidate of the earlier occurrence was already exposed.
In the first case this earlier occurrence is distinct from the
one now invoked: a candidate entered through type $u$ cannot
produce another forward resource of type $u$.  In the second case
the incoming candidate is likewise a distinct occurrence.  Thus
type $A$ repeats before the second birth of $u$.
\end{proof}

For this one-root exploration, write $\mathcal C_v$ for the event
that two distinct \emph{exposed} candidate occurrences have the same
 type.  The event refers to exposure, not invocation.

\begin{lemma}[First-label coupling]
\label{lem:direct-coupling}
For each fixed resource $v$, the finite-query answer $X_v$ and the
independent-cover answer $Y_v$ can be coupled so that
\[
 \Prb(X_v\ne Y_v)\le\Prb(\mathcal C_v).
\]
\end{lemma}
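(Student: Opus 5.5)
We couple by \emph{first-label assignment}, run along the cover exploration in its fixed exposure order. Realise the cover priorities as follows. Let $(U_A:A\in E(\Gamma))$ be the actual priorities and let $(U'_j)_{j\ge1}$ be an independent i.i.d.\ uniform sequence. Explore the cover query $Y_v$ in the deterministic order fixed above: births registered in resource-type order, scanner entries exposed in resource-then-candidate order, recursive calls made in priority order. When a candidate occurrence of type $A$ is exposed, give it priority $U_A$ if no earlier exposed occurrence has type $A$. Otherwise give it the next unused $U'_j$.

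The first step is to check that the cover priorities are i.i.d.\ uniform, so that $Y_v$ has the correct law. The exposure order is adapted: the identity of the next exposed occurrence is determined by the priorities already revealed. Each newly exposed occurrence therefore receives a fresh uniform variable independent of the past. This holds either because $U_A$ has not been read before (first occurrence of its type) or because $U'_j$ is unused. Exposures beyond the finite dependency neighbourhood are irrelevant, by Lemma~\ref{lem:finite-determination} and the depth-$h$ truncation, with $h\to\infty$. A standard sequential construction then shows that the joint law of the priorities attached to the occurrences the exploration reaches is i.i.d.\ uniform, and the unexplored occurrences can be given further independent variables.

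The second step is to show that on $\mathcal C_v^c$ the two queries run identically, so $X_v=Y_v$. On this event every exposed occurrence has a distinct type and carries priority $U_A$ of its type. By Lemma~\ref{lem:candidate-first}, on $\mathcal C_v^c$ no resource type is registered twice either. The map sending cover occurrences to their types is then injective on the explored part. Scanners in the cover query correspond entry by entry to scanners of the finite query: a candidate occurrence through $u$ forwards to the $r-1$ distinct types of $A\setminus\{u\}$, and the omitted incoming candidate matches $P$. Induction along the common exploration order, using identical priorities, thresholds and tie-breaks, shows that each call in the cover returns the same Boolean value as the corresponding $\mathsf{Test}$ call, so the roots agree. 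Note that the finite query on $\Gamma$ may merge repeated candidates, but on $\mathcal C_v^c$ no such repetition is ever exposed, so nothing is merged.

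The main obstacle is this induction: it must apply to the whole exposed region, not just the invoked part. The complete-scanner convention is what makes the exposure sequences of the two queries coincide up to the first repeated label, including exposed but uninvoked entries. I would therefore phrase the induction on exposure time: up to the first exposure of a repeated type, the two explorations have the same history. The event $\mathcal C_v$ is decided in the cover exploration, so $\Prb(X_v\ne Y_v)\le\Prb(\mathcal C_v)$ follows.
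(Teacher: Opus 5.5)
Your proposal is correct and follows the paper's proof essentially step for step: it assigns $U_A$ to the first exposed occurrence of each type and fresh uniforms to later ones, uses the adapted exposure order to recover the independent-cover law, and combines Lemma~\ref{lem:candidate-first} with an induction to show the two queries coincide on $\mathcal C_v^c$. Running that induction on exposure time rather than over the finite dependency neighbourhood is only a difference of phrasing.
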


\begin{proof}
Sample the actual priorities $(U_A:A\in E(\Gamma))$ independently
and uniformly.  Explore $\mathcal T_v$ in the prescribed scanner
order.  When an occurrence of  candidate type $A$ is exposed
for the first time, give it priority $U_A$; for every later exposed
occurrence of type $A$, use a fresh independent uniform priority.
Complete the unused occurrence priorities independently.

This produces the correct independent-cover law.  Immediately
before each exposure, the type to be exposed is determined by the
previously revealed information and the fixed scanner order.  If its
type is new, $U_A$ is still unread and independent of that history;
otherwise the assigned priority is fresh.  Induction over exposures
therefore gives independent uniform priorities to all cover
occurrences, without conditioning on $\mathcal C_v^c$.

On $\mathcal C_v^c$, every exposed cover priority equals the actual
priority of its type.  Lemma~\ref{lem:candidate-first} also makes
all registered resource types distinct.  Consequently the actual
and cover queries encounter the same complete scanners and use the
same priorities.  Their recursive answers agree by induction over
the almost surely finite dependency neighbourhood of
Lemma~\ref{lem:finite-determination}.  Hence $X_v=Y_v$ on
$\mathcal C_v^c$, proving the bound.
\end{proof}

\subsubsection*{Cavity survival probabilities}

At a root resource, survival before $x$ means that none of its candidate
branches successfully consumes it.  These branches are independent in the
cover, so this event has a product probability.  For a directed incidence
$A\to v$, let $f_{A\to v}(x)$ be the probability that the branch through
$A$ does not consume its parent resource $v$ before time $x$, and let
$q_v(x)$ be the probability that a root resource of type $v$ survives to
time $x$, so $q_v(1)=\Prb(Y_v=1)$.  Define also
\[
 q_{v\to A}(x)
 :=
 \prod_{\substack{C\ni v\\C\ne A}} f_{C\to v}(x).
\]
This is the probability that $v$ remains free before $x$ when its
branch through $A$ is omitted: the \emph{cavity} at the directed
incidence $v\to A$.  Omitting that branch prevents the calculation of
whether $A$ consumes $v$ from using the same candidate again.  We call
$f_{A\to v}$ and $q_{v\to A}$ \emph{messages}: they are precisely
the survival probabilities associated with the two directions of a
resource--candidate incidence.  Distinct branches in the remaining
tree are independent, which is why the product defining $q_{v\to A}$
is exact.

Consequently,
\begin{equation}\label{eq:cavity-root-survival}
 q_v(x)=\prod_{A\ni v}f_{A\to v}(x).
\end{equation}
For $x<1$ we have $f_{A\to v}(x)\ge1-x>0$, and hence also
\begin{equation}\label{eq:cavity-omitted-survival}
 q_{v\to A}(x)=\frac{q_v(x)}{f_{A\to v}(x)}.
\end{equation}
The quotient will only be used where the denominator is known to be
positive.

\begin{proposition}[Exact cavity equations]
\label{prop:cavity-equations}
For every directed incidence $A\to v$,
\begin{equation}\label{eq:cavity-branch-derivative}
 f'_{A\to v}(x)
 =-
 \prod_{u\in A\setminus\{v\}}q_{u\to A}(x).
\end{equation}
Consequently, defining $H_A(x):=\prod_{u\in A}q_{u\to A}(x)$,
we obtain
\begin{equation}\label{eq:cavity-root-derivative}
 q'_v(x)=-\sum_{A\ni v}H_A(x).
\end{equation}
\end{proposition}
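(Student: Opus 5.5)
We prove \eqref{eq:cavity-branch-derivative} by computing the probability that the branch through an occurrence of type $A$ consumes its parent $v$ in a short interval $[x,x+h)$. We then deduce \eqref{eq:cavity-root-derivative} by logarithmic differentiation of \eqref{eq:cavity-root-survival}.

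First we identify exactly when the branch consumes $v$. Consider a candidate occurrence $\alpha$ of type $A$ whose parent is a resource occurrence of type $v$. By Lemma~\ref{lem:finite-query} applied in the cover, the branch through $\alpha$ consumes $v$ before $x$ if and only if two things hold. The priority $U_\alpha$ is less than $x$. The recursive test $\mathsf{Test}(A\setminus\{v\},U_\alpha;\alpha)$ returns true, meaning every forward resource occurrence of $\alpha$ is free before $U_\alpha$ within its own subtree.

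The correctness argument of Lemma~\ref{lem:finite-query} carries over verbatim to the cover. We use Lemma~\ref{lem:finite-determination} to make the answers well defined, taking the depth-$h$ truncation and letting $h\to\infty$.

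The key structural point is that the forward subtrees of the $r-1$ children of $\alpha$ are disjoint. Their priorities are therefore independent of each other and of $U_\alpha$. Fix a child occurrence of type $u\in A\setminus\{v\}$. Its own candidate children are occurrences of every type $C\ni u$ with $C\ne A$, and each heads an independent branch of the same law as a branch $C\to u$. So the event that this child is free before $s$ has probability exactly $\prod_{C\ni u,\,C\ne A}f_{C\to u}(s)=q_{u\to A}(s)$.

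Conditioning on $U_\alpha=s$ and using independence, we obtain
\[
 1-f_{A\to v}(x)=\int_0^x\prod_{u\in A\setminus\{v\}}q_{u\to A}(s)\,ds .
\]
The step I expect to need the most care is showing that these cover messages are well defined and satisfy this identity. Two issues arise.
\begin{itemize}
\item The law of a branch depends only on the directed type $A\to v$, not on the position of the occurrence in $\mathcal T_v$. This holds because the unfolded subtree below an occurrence is determined by its type and incoming type.
\item Conditioning on $U_\alpha=s$ does not affect the children's subtrees.
\end{itemize}
Both follow from the construction of the typed cover. To make the limit $h\to\infty$ rigorous, we would prove the identity for truncated messages $f^{(h)}$ and pass to the limit by bounded convergence, which Lemma~\ref{lem:finite-determination} justifies.

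Since the integrand is bounded and continuous in $s$, differentiating gives \eqref{eq:cavity-branch-derivative}. Continuity holds because each $q_{u\to A}$ is a product of functions of the same integral form, hence Lipschitz.

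For the second identity, take $x<1$. Each $f_{A\to v}(x)\ge 1-x>0$, so we may differentiate the product \eqref{eq:cavity-root-survival} as
\[
 q_v'=\sum_{A\ni v}f'_{A\to v}\prod_{C\ni v,\,C\ne A}f_{C\to v}.
\]
Here $\prod_{C\ni v,\,C\ne A}f_{C\to v}=q_{v\to A}$. Substituting \eqref{eq:cavity-branch-derivative} gives
\[
 q_v'(x)=-\sum_{A\ni v}q_{v\to A}(x)\prod_{u\in A\setminus\{v\}}q_{u\to A}(x)=-\sum_{A\ni v}H_A(x).
\]
The endpoint $x=1$ follows by continuity, since all messages are Lipschitz on $[0,1]$.
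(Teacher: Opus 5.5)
Your proposal is correct and follows essentially the same route as the paper. You derive the integral identity $f_{A\to v}(x)=1-\int_0^x\prod_{u\in A\setminus\{v\}}q_{u\to A}(t)\,dt$ from disjointness of the children's subtrees, made rigorous by depth-$h$ truncation with child messages of index $h-1$ and bounded convergence, then differentiate it and apply the product rule to \eqref{eq:cavity-root-survival}; restricting to $x<1$ first is harmless but unnecessary, since the product rule needs no positivity of the factors.
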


\begin{proof}
Let $f_{A\to v}^{[h]}$ denote the branch message with $h$ remaining
candidate generations and free boundary resources.  Write
$q_{u\to A}^{[h-1]}$ for its child cavity messages.  The branch
through $A$ consumes $v$ at priority $t$ if and only if the other
resource children are free immediately before $t$.  Their subtrees are
disjoint, so for $h\ge1$,
\[
 f_{A\to v}^{[h]}(x)
 =1-\int_0^x
 \prod_{u\in A\setminus\{v\}}q_{u\to A}^{[h-1]}(t)\,dt.
\]
The child messages have index $h-1$, not $h$.  Almost-sure finite
determination gives pointwise convergence of these probabilities; bounded
convergence therefore yields
\begin{equation}\label{eq:cavity-integral}
 f_{A\to v}(x)
 =1-\int_0^x
 \prod_{u\in A\setminus\{v\}}q_{u\to A}(t)\,dt.
\end{equation}
The branch messages are continuous by \eqref{eq:cavity-integral}, as are their finite products
$q_{u\to A}$.  We may therefore differentiate \eqref{eq:cavity-integral}, obtaining \eqref{eq:cavity-branch-derivative}.
Differentiating \eqref{eq:cavity-root-survival} for the limiting messages gives
\[
 q'_v
 =
 \sum_{A\ni v}f'_{A\to v}
 \prod_{C\ni v,\,C\ne A}f_{C\to v}
 =-
 \sum_{A\ni v}
 q_{v\to A}
 \prod_{u\in A\setminus\{v\}}q_{u\to A},
\]
which is \eqref{eq:cavity-root-derivative}.
\end{proof}

\subsection{The homogeneous model}

If every resource has degree $d$, symmetry gives a common branch message
$f_d$, with $f_d'=-f_d^{(r-1)(d-1)}$, $f_d(0)=1$, and
$q_d=f_d^d$.  Integrating yields
\[
 q_d(x)=\bigl(1+((r-1)d-r)x\bigr)^{-d/((r-1)d-r)}.
\]
The reference profile is
$Q_D(x)=(1+(r-1)Dx)^{-1/(r-1)}$.  Comparing logarithms gives
$q_d(x)=Q_d(x)(1+O_r(\log d/d))$, uniformly for $0\le x\le1$.
In particular, $q_d(1)\sim((r-1)d)^{-1/(r-1)}$.

\subsection{The moment reduction and the two remaining tasks}

Let $p_{\mathrm{rep}}$ be the probability that a uniform-root cover
query exposes a candidate type twice.  Averaging Lemma~\ref{lem:direct-coupling} over a uniform root, with
$\bar q=N^{-1}\sum_vq_v(1)$, gives
\begin{equation}\label{eq:moment-first}
 \left|\E(Z_\Gamma/N)-\bar q\right|\le p_{\mathrm{rep}}.
\end{equation}

For the second moment, only now choose \emph{two} independent uniform
roots $V_1,V_2$, with replacement.  Their actual queries use the
same candidate priority field $(U_A)$, so their answers are
$X_{V_1},X_{V_2}$; the comparison uses two \emph{independent}
typed covers, with answers $Y_{V_1}^{(1)},Y_{V_2}^{(2)}$.

Couple them by the same first-label rule, exploring the first cover
and then the second: the first exposure of each type $A$ across both
covers uses $U_A$, and every later exposure receives a fresh
independent uniform.  The sequential-independence argument in
Lemma~\ref{lem:direct-coupling} shows that the two covers retain
their joint independent-cover law, even conditional on the sampled
roots.  If no candidate type is repeated within either cover or
across the two, the one-root agreement argument applies to both
answers simultaneously.

Let $p_{\mathrm{shared}}$ be the probability that the two covers expose
a common candidate type.  The probability of a discrepancy between the
two root-answer vectors is at most
$2p_{\mathrm{rep}}+p_{\mathrm{shared}}$.

Conditional on the actual priority field, both independent uniform
roots survive with probability $(Z_\Gamma/N)^2$.  In the independent
covers, the two answers are independent and each has mean $\bar q$.
Comparing the joint event that both answers are true gives
\begin{equation}\label{eq:moment-second}
 \left|\E[(Z_\Gamma/N)^2]-\bar q^2\right|
 \le 2p_{\mathrm{rep}}+p_{\mathrm{shared}}.
\end{equation}
Expanding the square and using \eqref{eq:moment-first}--\eqref{eq:moment-second}
with $Q=Q_D(1)$ gives the estimate needed below:
\begin{equation}\label{eq:moment-L2-bridge}
 \E[(Z_\Gamma/N-Q)^2]
 \le (\bar q-Q)^2+4p_{\mathrm{rep}}+p_{\mathrm{shared}}.
\end{equation}
The independent reference law has
$Q=\Theta_r(D^{-1/(r-1)})$.  Thus the two tasks needed for the
$L^2$ conclusion are precisely
\[
 \sup_v\left|\frac{q_v(1)}Q-1\right|=o(1),
 \qquad p_{\mathrm{rep}}+p_{\mathrm{shared}}=o(Q^2).
\]
Dividing \eqref{eq:moment-L2-bridge} by $Q^2$ gives relative $L^2$ convergence.
Sections~\ref{sec:stability} and \ref{sec:collision} establish the two bounds.

\section{Survival in the independent model} \label{sec:stability}

In this section we show that survival probabilities in the independent model remain
approximately equal to the homogeneous profile $Q_D$ up to priority $x=1$. 
We use the preceding notation: 
survival probability $q_v(x)$, branch message $f_{A\to v}(x)$,
reference profile $Q_D(x)$, resource walk $\mathsf T$ and stationary law $\pi$.
We write $\delta=\max_v|d_v/D-1|$ and $T_D=\log(1+(r-1)D)/(r-1)$.

\begin{theorem}[Independent-cover asymptotics]
\label{thm:cavity-stability}
Fix $r\ge3$, $q>2$, and $C,L,K<\infty$.
Let $\Gamma=\Gamma_n$ be simple $r$-uniform hypergraphs with
$N\le D^K$, $D\to\infty$ and $\delta=o(1)$.  Suppose
\[
 \left\|e^{-(r-1)t\mathsf T}\right\|_{L^q(\pi)\to L^q(\pi)}
 \le C(1+t)^L\qquad(0\le t\le T_D),
\]
\begin{equation}\label{eq:bootstrap-assumption}
 N^{1/q}\left(\delta+\frac{\log D}{D}\right)
 (1+\log D)^{2L+2}=o(1).
\end{equation}
Then, uniformly over $v,A\ni v$ and $0\le x\le1$,
\begin{align}
 q_v(x)&=(1+o(1))Q_D(x),\label{eq:cover-survival-asymptotic}\\
 f_{A\to v}(x)&=1-O_r\!\left(\frac{\log D}{D}\right).
 \label{eq:message-near-one}
\end{align}
\end{theorem}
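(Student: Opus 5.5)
The plan is a continuity (bootstrap) argument in the logarithmic clock of Section~\ref{sec:stability}, run jointly for the deviations $y_v$ and for the branch messages. Write $q_v(x)=Q_D(x)e^{y_v(\tau)}$ with $\tau=\log(1+(r-1)Dx)/(r-1)$, so that $x\in[0,1]$ corresponds to $\tau\in[0,T_D]$ and $y(0)=0$.

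\textbf{Step 1: an exact equation for $y$.} By \eqref{eq:cavity-root-derivative} and \eqref{eq:cavity-omitted-survival},
\[
 \frac{d}{dx}\log q_v=-\sum_{A\ni v}\prod_{u\in A\setminus\{v\}}q_{u\to A}
 =-\sum_{A\ni v}\prod_{u\in A\setminus\{v\}}\frac{q_u}{f_{A\to u}} .
\]
Using $Q_D'/Q_D=-DQ_D^{r-1}$ and $d\tau/dx=DQ_D^{r-1}$, this becomes
\[
 \frac{dy_v}{d\tau}=1-\frac1D\sum_{A\ni v}\exp\Bigl(\sum_{u\in A\setminus\{v\}}y_u\Bigr)\prod_{u\in A\setminus\{v\}}f_{A\to u}^{-1}.
\]
I rewrite it as $y'=-(r-1)\mathsf Ty+\mathcal E$, where the residual $\mathcal E_v$ collects four terms:
\begin{enumerate}[label=\textup{(\alph*)}]
\item the degree forcing $1-d_v/D$, of size at most $\delta$;
\item the message correction $\frac{d_v}{D}\cdot O_r\bigl(\max_{A,u}|f_{A\to u}^{-1}-1|\bigr)$;
\item the quadratic Taylor remainder, bounded by $O\bigl(\|y\|_\infty^2e^{(r-1)\|y\|_\infty}\bigr)$;
\item the term $-(r-1)(d_v/D-1)(\mathsf Ty)(v)$, of $L^q$ norm at most $(r-1)\delta\|y\|_q$ since $\mathsf T$ is an $L^q(\pi)$ contraction.
\end{enumerate}
Near-regularity gives $\pi(v)\ge(1-\delta)/((1+\delta)N)$, hence $\|y\|_\infty\le 2N^{1/q}\|y\|_{L^q(\pi)}$. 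This is the only place the factor $N^{1/q}$ enters.

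\textbf{Step 2: the bootstrap.} Put $\varepsilon:=\delta+\log D/D$, $\Lambda:=(1+\log D)^{L+1}$, and $U(\tau)=\sup_{s\le\tau}\|y(s)\|_{L^q(\pi)}$. Let $\tau^*$ be the first $\tau\le T_D$ at which one of the following fails:
\begin{enumerate}[label=\textup{(\alph*)}]
\item $U(\tau)\le K_0\Lambda\varepsilon$;
\item $\min_{A,u}f_{A\to u}\ge 1-K_1\log D/D$, evaluated at the corresponding $x$.
\end{enumerate}
Here $K_0$ and $K_1$ are large constants.

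\emph{The message condition improves on $[0,\tau^*]$.} Hypothesis \eqref{eq:bootstrap-assumption} gives $N^{1/q}\Lambda\varepsilon=o(1)$, so $\|y\|_\infty=o(1)$ on this interval. Then by \eqref{eq:cavity-integral},
\[
 1-f_{A\to v}(x)\le\int_0^x(1+o(1))\prod_{u\in A\setminus\{v\}}q_u\le(1+o(1))\int_0^1\frac{dt}{1+(r-1)Dt}.
\]
The right-hand side is $(1+o(1))\log(1+(r-1)D)/((r-1)D)$, which is at most $\frac12K_1\log D/D$ once $K_1$ is large. This is \eqref{eq:message-near-one}, and it also bounds residual term (b) by $O(\log D/D)$.

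\emph{The $L^q$ condition improves on $[0,\tau^*]$.} Duhamel's formula from Section~\ref{subsec:stability} together with the stability hypothesis gives
\[
 U(\tau)\le C(1+T_D)^{L}\,T_D\,\sup_{s\le\tau}\|\mathcal E(s)\|_q
 \le C'\Lambda\bigl(\varepsilon+\delta U+N^{1/q}U\,\|y\|_\infty\bigr).
\]
On $[0,\tau^*]$ the bracket is $\varepsilon\,(1+o(1))$, because both
\[
 \Lambda\delta\le N^{1/q}\Lambda^2\varepsilon=o(1)
 \quad\text{and}\quad
 N^{1/q}\Lambda\|y\|_\infty\le 2N^{2/q}\Lambda^2\varepsilon\cdot K_0=o(1).
\]
The second uses the hypothesis with the exponent $2L+2$. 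If $N^{2/q}$ is not directly covered, I would keep $U\|y\|_\infty\le 2N^{1/q}U^2$ and use $N^{1/q}\Lambda^2\varepsilon=o(1)$ instead; this is exactly why the power $2L+2$ is assumed. Hence $U\le 2C'\Lambda\varepsilon<K_0\Lambda\varepsilon$ when $K_0>2C'$.

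\emph{Closing the argument.} Both conditions hold with room to spare at $\tau^*$, so by continuity of $y$ and of $f$ (Proposition~\ref{prop:cavity-equations}) we get $\tau^*=T_D$. Then
\[
 \sup_{v}|y_v|\le 2N^{1/q}K_0\Lambda\varepsilon=o(1)
\]
on the whole interval, which is \eqref{eq:cover-survival-asymptotic}.

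\textbf{Main obstacle.} I expect the delicate point to be the quadratic term. It must be estimated in $L^q(\pi)$ using one factor of $\|y\|_\infty$, converted at cost $N^{1/q}$, and one factor of $\|y\|_q$, so that the loss is only $N^{1/q}$ rather than a square. The bookkeeping has to match the exponent $2L+2$ in \eqref{eq:bootstrap-assumption}, and $\varepsilon$ must absorb the $\log D/D$ message correction. A minor technical point is that the quotient $q_{u\to A}=q_u/f_{A\to u}$ is only valid for $x<1$. I would run the argument on $[0,1)$ and extend the bounds to $x=1$ by continuity.
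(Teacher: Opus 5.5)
Your overall architecture matches the paper's. It is a continuity argument in the logarithmic clock, using Duhamel's formula with the $L^q(\pi)$ stability hypothesis and the conversion $\|z\|_\infty\le 2N^{1/q}\|z\|_{L^q(\pi)}$. It also uses a message bound $1-f_{A\to v}=O(\log D/D)$ obtained by integrating $Q_D^{r-1}$; you fold this into the bootstrap where the paper isolates it as a lemma, and either way works. There is, however, a genuine gap exactly at the point you flagged as the main obstacle.

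\textbf{The gap.} Your bound (c) on the Taylor remainder is pointwise, $O(\|y\|_\infty^2)$, so its $L^q(\pi)$ norm is only $O(\|y\|_\infty^2)$. Closing the bootstrap then requires $\Lambda\|y\|_\infty^2\lesssim\varepsilon$. The only available control is $\|y\|_\infty\le 2N^{1/q}U\le 2K_0N^{1/q}\Lambda\varepsilon$, so this needs $N^{2/q}\Lambda^2\varepsilon=o(1)$.

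That condition is not implied by \eqref{eq:bootstrap-assumption}. For example, with $N=D^K$, $\delta\le\log D/D$ and $\tfrac12<K/q<1$, the hypothesis $N^{1/q}\Lambda^2\varepsilon=o(1)$ holds while $N^{2/q}\Lambda^2\varepsilon\to\infty$.

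Your fallback does not repair this. In the term $N^{1/q}U\|y\|_\infty$ one factor has already been converted, and converting the other gives $2N^{2/q}U^2$, not $N^{1/q}U^2$. Your last paragraph states the right target, namely a remainder of size $\|y\|_\infty\|y\|_{L^q(\pi)}$. But bound (c) does not deliver it, and Step 2 does not use it.

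\textbf{The missing idea.} Keep the remainder local instead of taking a supremum. With $z_{A,v}=\sum_{u\in A\setminus\{v\}}y_u$ and $\|y\|_\infty$ small,
\[
 |e^{z_{A,v}}-1-z_{A,v}|\le C_r\|y\|_\infty\sum_{u\in A\setminus\{v\}}|y_u|.
\]
Summing over $A\ni v$ and dividing by $D$, the quadratic contribution at $v$ is at most $C_r\|y\|_\infty\frac{d_v}{D}(\mathsf T|y|)(v)$. Since $\mathsf T$ is a Markov operator with stationary law $\pi$, it contracts $L^q(\pi)$. This gives an $L^q(\pi)$ bound of $C_r\|y\|_\infty\|y\|_{L^q(\pi)}$ with no $N$-loss, which is the paper's residual estimate, Lemma~\ref{lem:linearized}.

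Only then convert $\|y\|_\infty\le 2N^{1/q}U$, once. This yields $U\le C'\Lambda(\varepsilon+N^{1/q}U^2)$, which closes with $U\le K_0\Lambda\varepsilon$ precisely when $N^{1/q}\Lambda^2\varepsilon=o(1)$.

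\textbf{A minor slip.} Each summand of $\frac{d}{dx}\log q_v$ carries an extra factor $f_{A\to v}^{-1}$, so the product in your exact equation should run over all $u\in A$, as in Lemma~\ref{lem:log-equation}. This is harmlessly absorbed into your term (b).
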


\subsection{Bootstrap}

We use the following three lemmas, proved in the next subsection.
Write
\begin{equation}\label{eq:deviation-definition}
 q_v(x)=Q_D(x)e^{y_v(x)}
\end{equation}
and set
\[
 \tau:=\frac1{r-1}\log(1+(r-1)Dx).
\]
Since $d\tau/dx=DQ_D(x)^{r-1}$, this maps $[0,1]$ to
$[0,T_D]$, an interval of length $\Theta_r(\log D)$.

For $x<1$ the messages are positive, so \eqref{eq:cavity-omitted-survival} and
Proposition~\ref{prop:cavity-equations} give:

\begin{lemma}[Exact logarithmic equation]
\label{lem:log-equation}
For $x<1$,
\begin{equation}\label{eq:log-derivative}
 \frac{dy_v}{d\tau}
 =1-\frac1D
 \sum_{A\ni v}
 \exp\!\left(\sum_{u\in A\setminus\{v\}}y_u\right)
 \prod_{u\in A}f_{A\to u}^{-1}.
\end{equation}
\end{lemma}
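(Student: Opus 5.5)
The plan is a direct computation from the definition $q_v(x)=Q_D(x)e^{y_v(x)}$ together with the change of clock $d\tau/dx=DQ_D(x)^{r-1}$. Since $x<1$, every branch message satisfies $f_{A\to u}(x)\ge 1-x>0$, so $q_v(x)>0$ and $y_v=\log q_v-\log Q_D$ is well defined and differentiable. First compute
\[
 \frac{dy_v}{dx}=\frac{q_v'(x)}{q_v(x)}-\frac{Q_D'(x)}{Q_D(x)}
 =\frac{q_v'(x)}{q_v(x)}+DQ_D(x)^{r-1},
\]
using $Q_D'/Q_D=-DQ_D^{r-1}$, which follows by differentiating $Q_D(x)=(1+(r-1)Dx)^{-1/(r-1)}$. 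Dividing by $d\tau/dx=DQ_D^{r-1}$ then gives
\[
 \frac{dy_v}{d\tau}=1+\frac{q_v'(x)}{DQ_D(x)^{r-1}q_v(x)}.
\]

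Next insert \eqref{eq:cavity-root-derivative}, which gives $q_v'=-\sum_{A\ni v}H_A$. Each cavity message is rewritten through \eqref{eq:cavity-omitted-survival} as $q_{u\to A}=q_u/f_{A\to u}$. This yields
\[
 \frac{H_A}{q_v}=\prod_{u\in A\setminus\{v\}}q_u\prod_{u\in A}f_{A\to u}^{-1}.
\]
The factor $q_{v\to A}/q_v=f_{A\to v}^{-1}$ accounts for the $u=v$ term in the product of inverse messages.

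Finally substitute $q_u=Q_De^{y_u}$ for the $r-1$ resources $u\in A\setminus\{v\}$. The product then becomes $Q_D^{r-1}\exp(\sum_{u\in A\setminus\{v\}}y_u)$, and the factor $Q_D^{r-1}$ cancels the one in the denominator. What remains is exactly \eqref{eq:log-derivative}.

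The only point that needs care is justifying the divisions. Positivity of every message for $x<1$ was noted after \eqref{eq:cavity-omitted-survival}, so $q_v$ and the $f_{A\to u}$ are positive. Differentiability comes from Proposition~\ref{prop:cavity-equations}, since the messages are $C^1$ by \eqref{eq:cavity-integral}. Together these make each step valid. No real obstacle is expected; the main risk is bookkeeping, namely checking that the inverse-message product runs over all $u\in A$, including $v$ itself.
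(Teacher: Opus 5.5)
Your proposal is correct and follows essentially the same route as the paper's proof. You use $Q_D'/Q_D=-DQ_D^{r-1}$, rewrite $H_A/q_v$ via $q_{u\to A}=q_u/f_{A\to u}$ and $q_u=Q_De^{y_u}$, substitute $q_v'=-\sum_{A\ni v}H_A$, and divide by $d\tau/dx=DQ_D^{r-1}$, with the positivity of messages for $x<1$ justifying the divisions.
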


\begin{lemma}[Message estimate]
\label{lem:message-estimate}
There is $h_0=h_0(r)>0$ such that the following holds.  
Suppose that $\max_v|y_v(x)|\le h_0$ on an interval $0\le x\le x_0$.
Then, for $D$ sufficiently large,
\begin{equation}\label{eq:message-correction}
 1-f_{A\to v}(x)
 \le C_r\frac{\log D}{D}
\end{equation}
uniformly over all directed incidences and all $0\le x\le x_0$.  In
particular,
\(
 f_{A\to v}(x)\ge\frac34.
\)
\end{lemma}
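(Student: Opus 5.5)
The plan is to use the integral form \eqref{eq:cavity-integral} from Proposition~\ref{prop:cavity-equations}, together with the monotonicity of messages. We have
\[
 1-f_{A\to v}(x)=\int_0^x\prod_{u\in A\setminus\{v\}}q_{u\to A}(t)\,dt .
\]
Each cavity message $q_{u\to A}$ is a product of branch messages, each in $[0,1]$. It is obtained from $q_u=\prod_{C\ni u}f_{C\to u}$ by deleting one factor, so $q_{u\to A}=q_u/f_{A\to u}$.

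The only danger is a small denominator. I will first run a crude a priori argument to rule this out, and then upgrade it.

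\textbf{Crude bound.} For $t\le x_0$ the hypothesis $|y_u|\le h_0$ gives
\[
 q_u(t)\le e^{h_0}Q_D(t).
\]
I also need an upper bound on $q_{u\to A}$ that does not pass through $f_{A\to u}$. For this, use $q_{u\to A}\le\prod_{C\ni u,\,C\ne A}f_{C\to u}$ directly, and exploit the symmetry of the problem: all messages start at $1$ and are continuous and decreasing.

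Define $x_1\le x_0$ to be the supremum of the times up to which every message satisfies $f\ge 3/4$. Since $f(0)=1$ and all messages are continuous, $x_1>0$. On $[0,x_1]$ we have $q_{u\to A}\le \tfrac43 q_u\le \tfrac43 e^{h_0}Q_D$.

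\textbf{Estimating the integral.} Inserting this bound into the integral gives
\[
 1-f_{A\to v}(x)\le \left(\tfrac43 e^{h_0}\right)^{r-1}\int_0^x Q_D(t)^{r-1}\,dt .
\]
The explicit computation is
\[
 \int_0^1 Q_D^{r-1}\,dt=\int_0^1\frac{dt}{1+(r-1)Dt}=\frac{\log(1+(r-1)D)}{(r-1)D}=\frac{T_D}{D}.
\]
Hence $1-f\le C_r\log D/D$ on $[0,x_1]$, with $C_r=(\tfrac43e^{h_0})^{r-1}/(r-1)\cdot(1+o(1))$; here $h_0$ can be taken to be, say, $1$.

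\textbf{Closing the bootstrap.} For $D$ large, $C_r\log D/D<1/8$, so in fact $f\ge 7/8$ on $[0,x_1]$. By continuity the condition $f\ge 3/4$ then persists slightly beyond $x_1$, contradicting maximality unless $x_1=x_0$. Therefore the bound \eqref{eq:message-correction} holds on all of $[0,x_0]$, and the consequence $f\ge 3/4$ follows.

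\textbf{Main obstacle.} The only delicate point is the circularity: bounding the cavity messages requires dividing by $f_{A\to u}$. The continuity/bootstrap argument above handles this. Care is also needed at $x=1$, where $f\ge 1-x$ degenerates. This causes no problem because continuity on the closed interval still applies, and the integral bound never divides by anything except on $[0,x_1]$, where $f\ge3/4$.
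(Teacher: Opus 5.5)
Your proposal is correct and is essentially the paper's argument: until some message first drops to $3/4$, bound each cavity factor by $q_{u\to A}=q_u/f_{A\to u}\le\frac43e^{h_0}Q_D$, integrate $Q_D^{r-1}=1/(1+(r-1)Dt)$ to get $O_r(\log D/D)<1/4$, and use continuity to rule out any message reaching $3/4$ on $[0,x_0]$. The paper phrases the same step as the derivative bound $|f'_{A\to v}|\le(\frac43e^{h_0}Q_D)^{r-1}$, and your aside about bounding $q_{u\to A}$ without dividing by $f_{A\to u}$ is unnecessary, since the continuity argument already handles that division.
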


\begin{lemma}[Residual estimate]
\label{lem:linearized}
On any interval $0\le\tau\le\tau_0<T_D$ on which
$\sup_{s\le\tau_0}\|y(s)\|_\infty\le h_0$, for every $q\ge1$,
\begin{equation}\label{eq:residual-bound}
 \left\|\frac{dy}{d\tau}+(r-1)\mathsf Ty\right\|_{L^q(\pi)}
 \le C_r\left[
 \delta+\frac{\log D}{D}
 +\|y\|_\infty\|y\|_{L^q(\pi)}
 \right].
\end{equation}
\end{lemma}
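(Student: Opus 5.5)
We must prove the Residual estimate (Lemma~\ref{lem:linearized}). The plan is to start from the exact logarithmic equation of Lemma~\ref{lem:log-equation} and expand each summand about the point $y=0$, $f\equiv1$. Put $s_{A,v}:=\sum_{u\in A\setminus\{v\}}y_u$ and $\phi_A:=\prod_{u\in A}f_{A\to u}^{-1}$. Then write
\[
 e^{s_{A,v}}\phi_A=1+s_{A,v}+\rho_{A,v},
 \qquad
 \rho_{A,v}=(e^{s_{A,v}}-1-s_{A,v})+(e^{s_{A,v}}-1)(\phi_A-1)+(\phi_A-1).
\]
Summing over $A\ni v$ and dividing by $D$, the first two terms give $d_v/D$ and $(r-1)(d_v/D)(\mathsf Ty)(v)$, by the definition of $\mathsf T$. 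This yields the identity
\[
 \frac{dy_v}{d\tau}+(r-1)(\mathsf Ty)(v)
 =\Bigl(1-\frac{d_v}{D}\Bigr)
 +(r-1)\Bigl(1-\frac{d_v}{D}\Bigr)(\mathsf Ty)(v)
 -\frac1D\sum_{A\ni v}\rho_{A,v}.
\]

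The terms are then bounded one at a time.

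\textbf{Degree term.} The first term is at most $\delta$ pointwise, so its $L^q(\pi)$ norm is at most $\delta$.

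\textbf{Degree--walk cross term.} The second term is at most $(r-1)\delta\,\|\mathsf Ty\|_{L^q}\le(r-1)\delta\|y\|_\infty$, since $\mathsf T$ is a Markov contraction. Because $\|y\|_\infty\le h_0$, this is $O_r(\delta)$.

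\textbf{Message term.} Lemma~\ref{lem:message-estimate} applies because $\|y\|_\infty\le h_0$ on the interval; note that $\tau_0<T_D$ corresponds to $x<1$, so the messages are positive. It gives $f_{A\to u}\ge 3/4$ and $1-f_{A\to u}\le C_r\log D/D$. Hence $|\phi_A-1|\le C'_r\log D/D$, a product of $r$ factors each equal to $1+O(\log D/D)$. Also $|e^{s}-1|\le e^{(r-1)h_0}|s|\le C|s|$. So the last two pieces of $\rho_{A,v}$ are $O_r(\log D/D)$ pointwise. After multiplying by $d_v/D\le 1+\delta$, their contribution is $O_r(\log D/D)$ in $L^\infty$, and hence in every $L^q(\pi)$.

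\textbf{Quadratic term.} It remains to handle the Taylor remainder $|e^{s}-1-s|\le C s^2$ for $|s|\le(r-1)h_0$. Here I would bound $s_{A,v}^2\le\|y\|_\infty\cdot\sum_{u\in A\setminus\{v\}}|y_u|$, keeping one factor in $L^\infty$ and one as a linear expression. This gives
\[
 \frac1D\sum_{A\ni v}s_{A,v}^2\le(r-1)(1+\delta)\|y\|_\infty\,(\mathsf T|y|)(v).
\]
Taking $L^q(\pi)$ norms and using that $\mathsf T$ contracts $L^q(\pi)$, by Jensen and $\pi$-reversibility, bounds this by $C_r\|y\|_\infty\|y\|_{L^q(\pi)}$. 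Collecting all pieces gives \eqref{eq:residual-bound}.

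The only delicate point is the quadratic term. One must split it as $L^\infty$ times $L^q$ through the positive operator $\mathsf T|y|$, rather than crudely as $\|y\|_\infty^2$. This split is what keeps the bound compatible with the $N^{1/q}$ bookkeeping in the bootstrap. The contraction of $\mathsf T$ on $L^q(\pi)$ follows from stationarity: $\sum_v\pi(v)(\mathsf Tg)(v)=\sum_A\sum_{v\ne u\in A}g(u)/((r-1)rM)=\sum_u\pi(u)g(u)$, combined with Jensen.
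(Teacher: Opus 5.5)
Your proposal is correct and follows essentially the same route as the paper: expand the exact logarithmic equation of Lemma~\ref{lem:log-equation}, isolate the degree forcing $(1-d_v/D)\bigl(1+(r-1)(\mathsf Ty)(v)\bigr)$, bound the message corrections by $O_r(\log D/D)$ via Lemma~\ref{lem:message-estimate}, and split the Taylor remainder as $\|y\|_\infty$ times $\mathsf T|y|$ before using the $L^q(\pi)$-contractivity of $\mathsf T$. Your exact three-piece decomposition of $\rho_{A,v}$ is slightly tidier than the paper's, and the one slip is harmless: since $|s_{A,v}|\le(r-1)\|y\|_\infty$, the correct bound is $s_{A,v}^2\le(r-1)\|y\|_\infty\sum_{u\in A\setminus\{v\}}|y_u|$, which only changes your constant from $(r-1)(1+\delta)$ to $(r-1)^2(1+\delta)$ and is absorbed into $C_r$.
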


\begin{proof}[Proof of Theorem~\ref{thm:cavity-stability}]
Since $y(0)=0$, Duhamel's formula and
Lemma~\ref{lem:linearized} give, on every $[0,t]$ on which
$\|y\|_\infty\le h_0$,
\[
 y(\tau)=\int_0^\tau e^{-(r-1)(\tau-s)\mathsf T}
 \left(y'(s)+(r-1)\mathsf Ty(s)\right)\,ds.
\]
Near regularity gives $\pi_{\min}\ge c/N$, so
$\|z\|_\infty\le C_3N^{1/q}\|z\|_{L^q(\pi)}$.
Write $Y(\tau)=\sup_{0\le s\le\tau}\|y(s)\|_{L^q(\pi)}$.
The hypothesis and \eqref{eq:residual-bound} yield
\begin{equation}\label{eq:duhamel-prefix}
 Y(\tau)\le C_2(1+T_D)^{L+1}
 \left[\delta+\frac{\log D}{D}
       +C_3N^{1/q}Y(\tau)^2\right].
\end{equation}

Fix $C_1>2C_2$.  We claim that
\[
 Y(\tau)\le 2C_1(1+T_D)^{L+1}
             \left(\delta+\frac{\log D}{D}\right)
 \qquad (0\le\tau<T_D).
\]
Initially $y(0)=0$.  On any $[0,t]$ satisfying this bound and
$\|y\|_\infty\le h_0$, substitution into
\eqref{eq:duhamel-prefix} gives
\[
 Y(\tau)\le C_2(1+T_D)^{L+1}
 \left(\delta+\frac{\log D}{D}\right)
 \left[1+4C_3C_1^2N^{1/q}
       \left(\delta+\frac{\log D}{D}\right)
       (1+T_D)^{2L+2}\right].
\]
The bracket is $1+o(1)$ by \eqref{eq:bootstrap-assumption}.
Thus the preceding bound improves to at most
$\frac32C_1(1+T_D)^{L+1}(\delta+\log D/D)$. 
On the same $[0,t]$ we deduce
\[
 \|y\|_\infty\le 2C_1C_3N^{1/q}(1+T_D)^{L+1}
                    \left(\delta+\frac{\log D}{D}\right)
 =o(1)<h_0/2.
\]
The residual estimate has only been used under its hypothesis on $[0,t]$,
If either bootstrap bound failed before $T_D$,
continuity of the finite message products and their positivity for
$x<1$ would permit both strict improvements to persist past the
first failure.  Hence the bounds hold for all $\tau<T_D$.
In particular,
\[
 \sup_{\substack{0\le x<1\\v\in V(\Gamma)}}
 \left|\log\frac{q_v(x)}{Q_D(x)}\right|
 \le C_{r,C,L,q}N^{1/q}
 \left(\delta+\frac{\log D}{D}\right)
 (1+\log D)^{L+1}=o(1).
\]
The message estimate gives $f_{A\to v}(x)\ge3/4$ for $x<1$,
and hence also at $x=1$ by continuity.  Thus
$q_v(1)=\prod_{A\ni v}f_{A\to v}(1)>0$, so the logarithms and
bounds extend to $x=1$.  Now $q_v=Q_D e^{y_v}$ proves
\eqref{eq:cover-survival-asymptotic}, and the message estimate gives
\eqref{eq:message-near-one}.
\end{proof}

\subsection{Proofs of the auxiliary lemmas}

\begin{proof}[Proof of Lemma~\ref{lem:log-equation}]
Since $Q_D'/Q_D=-DQ_D^{r-1}$, we have
$dy_v/dx=q'_v/q_v+DQ_D^{r-1}$.
For $A\ni v$, \eqref{eq:cavity-omitted-survival} gives
\[
 \frac{H_A}{q_v}
 =Q_D^{r-1}
 \exp\!\left(\sum_{u\in A\setminus\{v\}}y_u\right)
 \prod_{u\in A}f_{A\to u}^{-1}.
\]
Substituting \eqref{eq:cavity-root-derivative} and dividing by $d\tau/dx=DQ_D^{r-1}$
gives \eqref{eq:log-derivative}.
\end{proof}

\begin{proof}[Proof of Lemma~\ref{lem:message-estimate}]
Until a message first reaches $3/4$, equations \eqref{eq:cavity-branch-derivative}, \eqref{eq:cavity-omitted-survival}, and
\eqref{eq:deviation-definition} give
\[
 |f'_{A\to v}(x)|
 \le
 \left(\frac43e^{h_0}Q_D(x)\right)^{r-1}
 \le C_rQ_D(x)^{r-1}.
\]
Since $Q_D(t)^{r-1}=1/(1+(r-1)Dt)$, integration gives
\[
 1-f_{A\to v}(x)
 \le C_r\int_0^xQ_D(t)^{r-1}\,dt
 \le C_r\frac{\log(1+(r-1)D)}{(r-1)D}
 =O_r\!\left(\frac{\log D}{D}\right).
\]
For sufficiently large $D$ this is below $1/4$, preventing a message
from reaching $3/4$ and proving \eqref{eq:message-correction}.
\end{proof}

\begin{proof}[Proof of Lemma~\ref{lem:linearized}]
For $A\ni v$, write
\[
 z_{A,v}:=\sum_{u\in A\setminus\{v\}}y_u,
 \qquad
 m_A:=\prod_{u\in A}f_{A\to u}^{-1}.
\]
By the message estimate, $m_A=1+O_r(\log D/D)$ uniformly.  For $\|y\|_\infty\le h_0$, Taylor's theorem gives
\[
 |e^{z_{A,v}}-1-z_{A,v}|
 \le
 C_r\|y\|_\infty
 \sum_{u\in A\setminus\{v\}}|y_u|.
\]
The cross term $(m_A-1)z_{A,v}$ has absolute value at most
$C_r(\log D/D)\sum_{u\in A\setminus\{v\}}|y_u|$.
Since $\|y\|_\infty\le h_0<1$ and $d_v=O_r(D)$, summing over
$A\ni v$ and dividing by $D$ contributes $O_r(\log D/D)$, which
is included in the additive error rather than the quadratic remainder.

Substituting in \eqref{eq:log-derivative},
\[
 \frac{dy_v}{d\tau}
 =1-\frac{d_v}{D}
 -\frac1D\sum_{A\ni v}
 \sum_{u\in A\setminus\{v\}}y_u
 +E_v,
\]
where
\[
 |E_v|
 \le
 C_r\left[
 \frac{d_v}{D}\frac{\log D}{D}
 +\frac{\|y\|_\infty}{D}
 \sum_{A\ni v}\sum_{u\in A\setminus\{v\}}|y_u|
 \right].
\]
The middle linear term is $-(r-1)(d_v/D)(\mathsf T y)(v)$.
Consequently, the residual in \eqref{eq:residual-bound} satisfies
\[
 (y'+(r-1)\mathsf Ty)_v
 =\left(1-\frac{d_v}{D}\right)
  \left(1+(r-1)(\mathsf T y)(v)\right)+E_v.
\]
Since $\|y\|_{L^q(\pi)}\le\|y\|_\infty\le1$ and the stationary
Markov operator $\mathsf T$ is a contraction on $L^q(\pi)$, taking
norms of this identity and the bound on $E_v$ proves \eqref{eq:residual-bound}.
\end{proof}

\section{Bounding collisions}
\label{sec:collision}

We return to the candidate system $\Gamma$ of Section~\ref{sec:cover}.
As before, we write $N=|V(\Gamma)|$ and $M=|E(\Gamma)|$.
We write $d_*:=\max_vd_v$ for the maximum resource degree.
We assume $D/2\le d_v\le3D/2$
for every resource $v$, with $D\ge D_0(r)$, and put
$B:=(r-1)(d_*-1)$ and $R_0:=1+(r-1)D$.  In particular $B=\Theta_r(D)$ and $M=\Theta_r(ND)$.  We use the cover survival bounds
\begin{equation}\label{eq:cover-survival-bounds}
 q_v(t)\le\tfrac54Q_D(t),\qquad
        f_{A\to v}(t)\ge\tfrac34
\end{equation}
for every type and $t\in[0,1]$.  These are consequences of
Theorem~\ref{thm:cavity-stability} in our eventual application.

At the terminal time the survival probability of a resource is of order
$D^{-1/(r-1)}$.  To control the second moment, we must compare the
actual and independent models with an absolute error smaller than its
square, $D^{-2/(r-1)}$; an error merely tending to zero would not suffice.
The following theorem makes this comparison without assuming that two
candidates intersect in at most one resource.  Recall that $\chi_R(\Gamma)$
is the proportion of candidates on incidence cycles of length at most $R$.

\begin{theorem}[Collision comparison]
\label{thm:transfer}
For each fixed $r\ge3$ there are constants $C_r<\infty$ and
positive integers $\alpha_r,\gamma_r$ such that for every
$\kappa\ge2$, with $R=4\kappa+8$ and
$\bar q=N^{-1}\sum_vq_v(1)$, the number $Z$ of uncovered resources
satisfies
\begin{align}
 \left|\E(Z/N)-\bar q\right|
 &\le C_rR_0^{\alpha_r}(\chi_R+B^{-\kappa}),\label{eq:collision-moment-first}\\
 \left|\E[(Z/N)^2]-\bar q^2\right|
 &\le 2C_rR_0^{\alpha_r}(\chi_R+B^{-\kappa})
     +\frac{C_rR_0^{\gamma_r}}M.\label{eq:collision-moment-second}
\end{align}
\end{theorem}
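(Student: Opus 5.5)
By \eqref{eq:moment-first} and \eqref{eq:moment-second}, it suffices to prove
\[
 p_{\mathrm{rep}}\le C_rR_0^{\alpha_r}(\chi_R+B^{-\kappa})
\]
and
\[
 p_{\mathrm{shared}}\le C_rR_0^{\gamma_r}/M.
\]
Both are first-moment bounds over pairs of exposed candidate occurrences in the cover query. The basic estimate is the expected number of invoked candidate occurrences at each depth.

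An occurrence at cover depth $\ell$ is invoked only if its ancestor chain has decreasing priorities. Each intermediate resource must also survive in the cavity sense. By \eqref{eq:cover-survival-bounds} and Proposition~\ref{prop:cavity-equations}, the density that a branch through $A$ is invoked at priority $t$ is at most $\prod_{u}q_{u\to A}(t)\le (\tfrac53 Q_D(t))^{r-1}$. Integrating along a chain of length $\ell$, the expected number of invoked depth-$\ell$ descendants of a resource is at most
\[
 \int_{0<t_\ell<\dots<t_1<1}\prod_i B\,(C Q_D(t_i))^{r-1}\,dt \le \frac{(C'\log R_0)^\ell}{\ell!}.
\]
This uses $\int_0^1 D Q_D^{r-1}=T_D$. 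Each invoked occurrence exposes at most $(r-1)d_*=O(R_0)$ entries. So the expected number of exposed occurrences within depth $\ell$ is at most $R_0^{O(1)}$, uniformly in $\ell$. The factorial also makes depths beyond $C\log R_0$ contribute negligibly.

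For $p_{\mathrm{rep}}$, take the first repeated type. Consider the two exposed occurrences of type $A$ and the path between them in the cover, through their last common ancestor. Its image in $\Gamma$ is a closed incidence walk. Split into cases by the length $\ell$ of this path.

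\emph{Short case, $\ell\le R$.} By Lemma~\ref{lem:candidate-first}, no resource type repeats before this moment, so the image walk contains a simple incidence cycle of length at most $R$ through some candidate on the path. Sum over the at most $R$ positions on the path. The expected number of exposed occurrences whose type lies on a short cycle is at most $R_0^{O(1)}$ times the $\pi$-mass of such candidates. Averaging over a uniform root and using near-regularity, this mass is $O(\chi_R)$. Hence this case contributes at most $R_0^{O(R)}\chi_R$.

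\emph{Long case, $\ell>R$.} Fix the common ancestor and the first occurrence. Its branch determines the type $A$. The second branch of length about $\ell/2$ must end at one of the $r$ resources of that fixed type. A uniformly random branch of the cover hits a specified resource with probability about $1/N$, giving a bound $R_0^{O(1)}\ell\,(C\log R_0)^{\ell}/\ell!\cdot N^{-1}$. That alone is too weak when $N$ is only polynomial in $D$. Instead I would use the girth-free window: the last $\kappa$ candidate steps before closing are on no short cycle. I would then show that the number of non-backtracking continuations closing up in those steps is at most $d_*^{\kappa}$ out of $B^{\kappa}$ choices. This saving of $B^{-\kappa}$, times the polynomial exposure count, gives $R_0^{\alpha_r}B^{-\kappa}$. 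I expect this step to be the main obstacle. It requires making "the last segment closes a cycle" a union bound with the correct entropy loss. I would take $R=4\kappa+8$ so that any closing walk either has a segment of length $\kappa$ avoiding short cycles, or contributes to the $\chi_R$ term. I would also handle candidates meeting in several resources via the merged-scanner entries.

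For $p_{\mathrm{shared}}$, the two covers are independent with independent uniform roots. The first cover exposes at most $R_0^{O(1)}$ candidate types in expectation. For a fixed type $A$, the probability that the second cover, from a uniform root, exposes $A$ is at most
\[
 \sum_{\ell}(\text{number of roots at depth }\ell\text{ from }A)\cdot\frac{\text{invocation weight}}{N}\le \frac{R_0^{O(1)}}{N}.
\]
This follows by reversing the path count, using $\pi$ near uniform. Since $N=\Theta(M/D)$, the product is at most $C_rR_0^{\gamma_r}/M$. Substituting both bounds into \eqref{eq:moment-first} and \eqref{eq:moment-second} proves \eqref{eq:collision-moment-first} and \eqref{eq:collision-moment-second}.
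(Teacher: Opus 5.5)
Your reduction, exposure estimate, short-cycle case and treatment of $p_{\mathrm{shared}}$ follow the paper's route. Your bound $\sum_Ap_A^2\le\max_Ap_A\cdot\sum_Ap_A$ is a fine variant of the paper's $M\max_Ap_A^2$. The genuine gap is the long-collision case, which you flag as the main obstacle but do not resolve. The count you propose there does not give the claimed gain: $d_*^{\kappa}$ closing continuations out of $B^{\kappa}$ saves only about $(d_*/B)^{\kappa}\approx(r-1)^{-\kappa}$, not $B^{-\kappa}$. What is needed is that the last $\kappa$ steps cost $O(d_*)$ choices \emph{in total}, and the idea that delivers this is uniqueness of paths under the girth hypothesis.

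Concretely, as in the paper, take the \emph{longer} arm from the last common node $z$. Your ``second branch of length about $\ell/2$'' may in fact be the short one. Say this arm ends at $x$, exposed in the scanner of a resource $u$ born from an invoked $Y$. Let $A_0,u_1,A_1,\dots,u_\kappa,A_\kappa=Y$ be the last $\kappa$ invocation steps, strictly after $z$; this is where $R=4\kappa+8$ is used. Encode the witness by everything outside the open interior of this block, together with the type $A$ of the other selected occurrence $y$. Since $x$ also has type $A$, the candidate $Y$ is recovered by choosing $u\in A$ and $Y\ni u$, in at most $rd_*$ ways. The interior is then forced: two distinct simple incidence paths of length $2\kappa$ from $A_0$ to $Y$ would span an incidence cycle of length at most $4\kappa\le R$ through a candidate of the selected path. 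That candidate would be a type in $\mathcal C_R$, which is excluded once the $\chi_R$ case has been split off.

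This gives at most $C_rNR_0^4(K_rB)^LB^{-\kappa}$ witnesses per shape. Multiply by a per-witness bound $C_rR_0^3\mu_D^{h+\ell_1+\ell_2}/(h!\,\ell_1!\,\ell_2!)$ for the joint exposure of both selected occurrences. This is your single-chain integral applied to the two branches, using conditional independence of the off-path subtrees and discarding the conditions at the branching invocation. Summing over the $K_r^{L+1}$ shapes yields $R_0^{O(1)}B^{-\kappa}$. Without the uniqueness step, the union bound has no $B^{-\kappa}$ factor at all.

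A smaller point: your short-case bound $R_0^{O(R)}\chi_R$ depends on $\kappa$ through $R$, whereas the statement requires $\alpha_r$ to depend only on $r$. No summation over positions is needed. A short first collision already exposes a type in $\mathcal C_R$, namely the collision type itself. So this case is contained in the event that the query exposes some type in $\mathcal C_R$, which has probability at most $\sum_{A\in\mathcal C_R}p_A\le C_rR_0^{H_r}M\chi_R/N\le C_rR_0^{H_r+1}\chi_R$.
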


\begin{proof}[Proof of Theorem~\ref{thm:fixed-host}, assuming Theorem~\ref{thm:transfer}]
Theorem~\ref{thm:cavity-stability} gives
\[
 q_v(1)=(1+o(1))Q_D(1)
 =(1+o(1))((r-1)D)^{-1/(r-1)}.
\]
Thus $\bar q=(1+o(1))Q_D(1)$, and the two bounds \eqref{eq:cover-survival-bounds} hold for
sufficiently large $n$.

Let $\alpha_r,\gamma_r$ be supplied by Theorem~\ref{thm:transfer}.
Choose $\kappa_r>\alpha_r+2/(r-1)+2$ and set
$R_r:=4\kappa_r+8$.  Enlarge the exponents in the statement of
Theorem~\ref{thm:fixed-host}, if necessary, to satisfy
\[
 A_r>\alpha_r+1,\qquad B_r>\gamma_r+1.
\]
Since $B=\Theta_r(D)$, the anchored-counting contribution obeys
$R_0^{\alpha_r}B^{-\kappa_r}=o(D^{-2/(r-1)})$.
The short-cycle hypothesis gives
$\chi_{R_r}R_0^{\alpha_r}=o(D^{-2/(r-1)})$,
while $D^{B_r}/M=o(D^{-2/(r-1)})$ controls the cross-root term.
Thus Theorem~\ref{thm:transfer} gives
$|\E(Z/N)-\bar q|=o(D^{-2/(r-1)})$ and
$|\E[(Z/N)^2]-\bar q^2|=o(D^{-2/(r-1)})$.
As $\bar q=(1+o(1))Q_D(1)$ and
$Q_D(1)^2=\Theta_r(D^{-2/(r-1)})$, the moment bound
\eqref{eq:moment-L2-bridge} gives
$\E[(Z/(NQ_D(1))-1)^2]=o(1)$.
The expectation, variance and convergence-in-probability assertions
follow.
\end{proof}

\subsection{Proof of the collision comparison}
\label{sec:collision-comparison-proof}

The moment comparison in Section~\ref{sec:cover} reduces
Theorem~\ref{thm:transfer} to two probabilities: for a uniform-root
cover query, repetition of an exposed candidate type, and for two
independent cover queries, exposure of a common candidate type.
With $R=4\kappa+8$, it suffices to prove
\begin{align}
 p_{\mathrm{rep}}&\le C_rR_0^{\alpha_r}(\chi_R+B^{-\kappa}),\label{eq:first-collision-bound}\\
 p_{\mathrm{shared}}&\le C_rR_0^{\gamma_r}/M.\label{eq:cross-root-collision}
\end{align}

Fix a root resource type $v$ in the independent cover.  For distinct
candidate occurrences $x,y$, let $W=P(v,x)\cup P(v,y)$ be the union
of their incidence-tree paths from the root.  A candidate occurrence
through which one of these paths continues to a forward resource must
be invoked; either $x$ or $y$ may instead be merely exposed in a
scanner.  Suppress the intermediate resource occurrences when counting
steps between successive required candidate invocations.  These form a
common chain of length $h$ and two continuations of lengths
$\ell_1,\ell_2$, with $L=h+\ell_1+\ell_2$; any of the three
lengths may be zero.

A \emph{shape} records this rooted two-path topology, the positions at
which $x,y$ are exposed (including when one is an ancestor of the other),
and, at each step, the choice of incoming and outgoing incidence
positions in an $r$-candidate.  It does not record any  resource
or candidate types.  For fixed $h,\ell_1,\ell_2$, the branching
location and the exposure locations have at most $(L+3)^3$ choices,
and each candidate step has at most $O_r(1)$ incidence-position
choices.  Thus the number of shapes is at most $K_r^{L+1}$ for a
sufficiently large constant $K_r$ depending only on $r$.

A \emph{first-collision witness} has $x$ and $y$ of the same 
candidate type, with all other candidate types distinct from each
other and from that type, and all resource types distinct.  The first
repeated candidate type in a cover query gives such a witness by
Lemma~\ref{lem:candidate-first}.

\begin{figure}[t]
\centering
\begin{tikzpicture}[>=latex, font=\small, thick,
  every node/.style={inner sep=2pt}]
\node (v) at (0,0) {$v$};
\node (z) at (2.2,0) {$z$};
\node (up) at (3.65,0.95) {$\cdots$};
\node (y) at (5.1,0.95) {$y$};
\node (a0) at (3.5,-1.05) {$A_0$};
\node (yy) at (5.6,-1.05) {$Y$};
\node (x) at (7.15,-1.05) {$x$};
\draw[->] (v)--node[above]{common chain}(z);
\draw[->] (z)--(up);
\draw[->] (up)--(y);
\draw[->] (z)--(a0);
\draw[->,dashed] (a0)--node[above]{$\kappa$ steps}(yy);
\draw[->,dotted] (yy)--node[above]{scanner}(x);
\draw[densely dotted, rounded corners] (y.north west)++(-.18,.19) rectangle ++(.65,-.64);
\draw[densely dotted, rounded corners] (x.north west)++(-.17,.19) rectangle ++(.65,-.64);
\end{tikzpicture}
\caption{A two-path witness in the non-ancestor case.  The selected
occurrences $x,y$ have the same  candidate type; the dashed block
is reconstructed from its two boundary types in Lemma~\ref{lem:private-block}.
The paths depict successive candidate invocations, while the final
dotted step depicts scanner exposure.}
\label{fig:collision-witness}
\end{figure}

Put $c_r=\frac{20}{9}(\frac53)^{r-2}$ and
$\mu_D=c_r\log R_0/((r-1)D)$, so that
$B\mu_D\le\frac32c_r\log R_0$.  Choose $K_r$ sufficiently
large for both the shape count and the incidence choices in the
anchored count below.  We use three estimates, proved below.

\begin{lemma}[Two-path visitation]
\label{lem:two-path-visit}
For a fixed $W=P(v,x)\cup P(v,y)$ as above, the probability that
$x$ and $y$ are both exposed is at most
$C_rR_0^3\mu_D^{h+\ell_1+\ell_2}/(h!\,\ell_1!\,\ell_2!)$.
\end{lemma}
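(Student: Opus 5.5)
The plan is to reduce the event that $x$ and $y$ are both exposed to the event that every required candidate occurrence on $W$ is invoked. Then we bound this by a product of invocation rates, integrated over decreasing priority chains. A candidate occurrence on a path of $W$ that continues to a forward resource must be invoked. For a chain of invoked occurrences $A_1,\dots,A_\ell$ from the root, each $A_{j+1}$ is in the scanner of the focus set produced by $A_j$. Invoking it requires $U_{A_{j+1}}<U_{A_j}$, and also that no earlier true recursive call in that scanner has stopped the evaluation.

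Along the chain, the parent's focus resources other than the one through which $A_{j+1}$ is attached must be free before $U_{A_{j+1}}$. This holds as well for the other resources of the parent's scanner that precede it in priority order. I would first condition on the priorities $t_1>t_2>\dots>t_\ell$ of the chain occurrences. The density for $A_{j+1}$ to have priority $t_{j+1}$ and be invoked is then bounded by the probability that the sibling subtrees return false. In the independent cover these subtrees are disjoint and independent of the chain.

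The key estimate is that at priority $t$ the invocation density per step is at most a constant times $Q_D(t)^{r-1}$. The argument is the same as in Lemma~\ref{lem:message-estimate}. We bound the survival of the $r-2$ sibling forward resources by cavity messages $q_{u\to A}(t)\le\frac43 q_u(t)\le\frac53Q_D(t)$, using \eqref{eq:cover-survival-bounds}. The remaining factor accounts for the root-side resource at that step, which is where the constant $c_r=\frac{20}{9}(\frac53)^{r-2}$ comes from.

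Since $\int_0^1 Q_D(t)^{r-1}\,dt=\log R_0/((r-1)D)$, the natural bound for a single chain of length $\ell$ is
\[
\int_{1>t_1>\dots>t_\ell>0}\prod_j c_rQ_D(t_j)^{r-1}\,dt=\frac{1}{\ell!}\Bigl(c_r\int_0^1Q_D^{r-1}\Bigr)^{\ell}=\frac{\mu_D^\ell}{\ell!}.
\]
This uses the symmetry of the integrand to remove the ordering constraint. For the two-path $W$ I would integrate first over the common chain of length $h$. Conditional on its last priority $s$, the two continuations of lengths $\ell_1,\ell_2$ run in disjoint subtrees of the cover, so they are independent given the shared history. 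Each continuation contributes at most $\mu_D^{\ell_i}/\ell_i!$ after dropping the constraint that its priorities lie below $s$. This gives the product $\mu_D^{h+\ell_1+\ell_2}/(h!\ell_1!\ell_2!)$.

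The factor $R_0^3$ should absorb three boundary effects. The first is the exposure of $x$ and $y$ themselves, which is not required to be invoked and costs at most $1$ each. The second is the branching occurrence, where the two continuations leave through a common invoked candidate, or leave through scanner entries of the same focus set. This replaces one product of independent factors by the inverse of a message or a cavity ratio. The third comes from cavity-versus-root conversions, using $q_{u\to A}=q_u/f_{A\to u}$ and $1/q_u(t)\le R_0$.

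The main obstacle is making the conditional-independence step rigorous. The required invocation events along $W$ must factor into the sibling-subtree events in the presence of the early stopping rule, which says a scanner returns false at the first true recursive call. I would handle this by showing monotonicity. The event that a given entry is invoked requires all earlier entries in its scanner to return false. Those earlier entries lie in sibling subtrees disjoint from $W$, so conditioning on them only imposes survival-type events. These are bounded above by cavity survival probabilities, so dropping them gives an upper bound. The denominators that appear are controlled crudely by $f\ge\frac34$ and $Q_D\ge R_0^{-1/(r-1)}$, and they are what produce the polynomial factor $R_0^3$.
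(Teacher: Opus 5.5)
Your proposal follows the paper's proof. You condition on the priorities of the required invocations, and you bound each ordinary step $A\to C$ by the cavity product over the $r-2$ sibling forward resources and over the attaching resource with both $A$ and $C$ removed, which gives $w(t_C)=c_rQ_D(t_C)^{r-1}$. You then use disjointness of sibling subtrees for conditional independence, and integrate over decreasing priorities after dropping the ordering between the common chain and the two continuations.

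The one point to state precisely is the branching invocation. Its two required successors share one merged scanner, so the lower-priority successor's call, whose subtree contains part of $W$, must return false. The continuations are therefore not literally independent there, contrary to your claim that earlier entries always lie off $W$. The paper simply discards the scanner conditions at that invocation and at the root, and reinstates the at most two lost step factors via $w\ge c_r/R_0$. That is the actual source of the loss $(R_0/c_r)^2\le C_rR_0^3$; the exposures of $x,y$ and the cavity-to-root conversions cost only constants.
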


For each  candidate type $A$, let $p_A$ be the probability that
a cover query with a uniformly chosen root exposes an occurrence of type $A$.
\begin{lemma}[Uniform exposure of one candidate type]
\label{lem:one-label-exposure}
There are constants $C_r,H_r$ depending only on $r$ such that
$p_A\le C_rR_0^{H_r}/N$ for every candidate type $A$.
\end{lemma}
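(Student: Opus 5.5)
Fix a candidate type $A$ and write $p_A=\frac1N\sum_v p_A(v)$, where $p_A(v)$ is the probability that the cover query rooted at type $v$ exposes some occurrence of type $A$. The plan is a union bound over the rooted paths in the typed incidence cover that end in an exposed occurrence of type $A$. A path of $\ell$ candidate invocations followed by one scanner exposure is determined by its root type and its sequence of types. The proof should follow the single-path analogue of Lemma~\ref{lem:two-path-visit}: each of the $\ell$ intermediate candidate occurrences must be invoked. For this, their priorities must decrease along the path, and the cavity messages at each step must leave the forward resources free. Conditioning step by step and using \eqref{eq:cover-survival-bounds}, a fixed path should have visitation probability at most $C_r R_0\,\mu_D^{\ell}/\ell!$, with an extra factor $R_0$ at most absorbing the final exposure and the root. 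So this is the one-path case $h=\ell$, $\ell_1=\ell_2=0$ of the same estimate, and I would obtain it by the same integration over ordered priorities.

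Next I sum over type sequences. Instead of counting forward from $v$ (about $B^\ell$ choices, which the factor $\mu_D^\ell/\ell!$ does not beat uniformly), I count backward from $A$ and then average over the root. Summing over the root $v$ and all paths of length $\ell$ ending at $A$ is the same as counting walks of length $\ell$ in the incidence graph that end at $A$ with arbitrary starting point. Reversing these walks, there are at most $(r\,B)^{\ell}\cdot O_r(1)$ of them, since each backward step chooses a resource of the current candidate and then another candidate through it. Hence
\[
 \sum_v p_A(v)\le \sum_{\ell\ge0} C_r R_0\,(rB)^{\ell}\frac{\mu_D^{\ell}}{\ell!}
 \le C_rR_0\exp(rB\mu_D)\le C_rR_0\cdot R_0^{\frac32 rc_r}.
\]
Here I used $B\mu_D\le\frac32c_r\log R_0$. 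Dividing by $N$ then gives $p_A\le C_rR_0^{H_r}/N$ with $H_r=1+\lceil\frac32 rc_r\rceil$. This choice of $H_r$ is the key reason for using the averaged (uniform-root) formulation: the counting is done from the fixed endpoint $A$, where the branching is bounded by $rB$ regardless of the root.

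The main obstacle is the visitation estimate for one path. It is uniform over types, but the path may revisit types, since we do not assume a first-collision witness here. The cover, however, is a tree with independent priorities, so type repetitions do not matter. The probability that a specified chain of distinct occurrences is invoked depends only on the cover structure and the messages. I would bound it by nesting integrals: the occurrence at depth $j$ is invoked at priority $t_j<t_{j-1}$ only if the other resources of its parent candidate are free before $t_j$. This has density at most $\prod_{u}q_{u\to A'}(t_j)\le(\frac43\cdot\frac54)^{r-1}Q_D(t_j)^{r-1}$. Integrating the ordered simplex then gives $\bigl(\int_0^1 c_rQ_D^{r-1}\bigr)^\ell/\ell!\le \mu_D^\ell/\ell!$; I expect the constant $c_r$ was chosen precisely so this step works. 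Two points need care here. The first is the dependence between the freeness events on consecutive steps. The second is that all scanner entries are exposed regardless of priority, so the final exposure costs only $O(1)$. I would handle both by conditioning on the forward subtrees, which are disjoint, exactly as in the derivation of \eqref{eq:cavity-integral}.
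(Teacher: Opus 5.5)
Your proposal is correct and follows essentially the paper's argument: a union bound over cover paths ending in an exposed occurrence of type $A$, the single-path case of the visitation integral from Lemma~\ref{lem:two-path-visit}, and a backward count from $A$ (at most $rB^\ell$ walks over all roots), so that averaging over the root supplies the factor $1/N$ and $B\mu_D\le\frac32c_r\log R_0$ closes the exponential sum. One small correction: conditioning on the child's priority $t_j$ removes that child's own branch from the cavity at the resource of $A'$ through which it hangs, so the step density is your product times $1/f\le\frac43$, i.e.\ at most $\frac43(\frac53)^{r-1}Q_D(t_j)^{r-1}=w(t_j)$ (which is exactly why $c_r=\frac{20}{9}(\frac53)^{r-2}$), while the root step, whose density is only $O(Q_D(t_1))$, is what your extra factor $R_0$ absorbs via $w\ge c_r/R_0$.
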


\begin{lemma}[Anchored count for long first collisions]
\label{lem:private-block}
Let $\kappa\ge2$, $R=4\kappa+8$, and fix a two-path shape with
$L$ successive-invocation steps and distance greater than $R$
between $x$ and $y$ in the incidence tree.  There are at most $C_rNR_0^4(K_rB)^LB^{-\kappa}$ first-collision
witnesses of this shape for which no candidate type on either selected
path lies on a simple incidence cycle of length at most $R$.
\end{lemma}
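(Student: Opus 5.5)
The idea is to count witnesses by choosing types greedily along the shape, which costs a factor of about $B$ per step. We then save a factor of $B^{\kappa}$ by reconstructing a block of $\kappa$ consecutive steps from its two boundary types. Uniqueness of that reconstruction comes from the absence of short cycles through the path types.

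\emph{Naive count.} Fix the shape, with common chain of length $h$ and continuations of lengths $\ell_1,\ell_2$. Choose the root type $v$ in $N$ ways. Every successive-invocation step chooses an outgoing incidence position ($O_r(1)$ ways) and a candidate through the current resource (at most $d_*$ ways). So a step costs at most $K_rB$ for suitable $K_r$, and the exposure steps to $x,y$ cost at most $r d_*=O_r(R_0)$ each. This gives at most $C_rNR_0^2(K_rB)^L$ witnesses of the shape.

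\emph{Where the saving comes from.} Since the incidence-tree distance between $x$ and $y$ exceeds $R=4\kappa+8$, we have $2(\ell_1+\ell_2)+O(1)>R$. Hence one continuation has length at least $\kappa+2$; relabel so that it is the branch ending at $x$ (in the ancestor case, the part of the path beyond $y$). We count in the following order:
\begin{enumerate}
\item Choose all types on the common chain and on the branch to $y$ freely, at cost $N R_0(K_rB)^{h+\ell_2}$.
\item Along the branch to $x$, choose freely up to the candidate $A_0$ that starts the final block of $\kappa$ steps.
\item Use the fact that $x$ has the same type as $y$, which is already fixed. The candidate $Y$ whose scanner exposes $x$ is therefore determined, up to $r$ choices of shared resource and at most $d_*\le R_0$ choices of candidate, by that type.
\item Determine the $\kappa$-step block from $A_0$ to $Y$ by its two boundary types, as in Figure~\ref{fig:collision-witness}.
\end{enumerate}

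\emph{Key claim: the block is unique.} Given the boundary types $A_0$ and $Y$, there is at most one sequence of distinct types forming a path of $\kappa$ steps between them all of whose types avoid simple incidence cycles of length at most $R$. Suppose two such paths are distinct. Their union contains a simple incidence cycle of length at most $2(2\kappa+2)\le R$. This cycle passes through a candidate type lying on one of them, contradicting the short-cycle hypothesis, which we assume for every candidate type on either selected path. Here the first-collision structure (all resource types distinct, other candidate types distinct) ensures that the paths really are paths in $\mathcal I(\Gamma)$. The block therefore costs $O_r(1)^{\kappa}\le K_r^{\kappa}$ for positions instead of $(K_rB)^\kappa$. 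Multiplying the factors gives $C_rNR_0^4(K_rB)^{L}B^{-\kappa}$, where the extra powers of $R_0$ absorb the steps at $Y$ and the two exposures.

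\emph{Main obstacle.} The delicate step is the simple-cycle extraction in the uniqueness claim. One must make sure the closed walk formed by two differing block paths actually contains a simple cycle through a type on a selected path, rather than only through types near it. Since $\chi_R$ counts only candidates lying on short cycles, I would handle this by taking the first point where the two paths diverge and the first subsequent point where they rejoin. The segment between these points is a simple cycle through the divergence candidate, which is a type of the witness. One must also treat the ancestor case, where $y$ lies on $P(v,x)$, separately: there the block is placed strictly between $y$ and $x$, and the required room again follows from the distance exceeding $R$.
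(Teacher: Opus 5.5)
Your proposal is correct and is essentially the paper's proof. You fix the root and all incidence choices outside the last $\kappa$ invocation steps of the longer arm, recover $Y$ from the common type of $x$ and $y$ at cost $rd_*$, and get injectivity because two distinct simple incidence paths of length $2\kappa$ from $A_0$ to $Y$ would close a cycle of length at most $4\kappa\le R$ through a selected candidate type. Your first-divergence/first-rejoin extraction of that simple cycle makes explicit a step the paper only asserts, and the small slack in your bookkeeping does not affect the bound: you may only get $\kappa+1$ rather than $\kappa+2$ steps on the longer arm in some branching cases, and the four factors of $R_0$ must also pay for the initial root candidates.
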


The visitation and exposure lemmas are proved in Section~\ref{sec:visitation-exposure},
and the anchored count in Section~\ref{sec:anchored-counting}.
\begin{proof}[Proof of Theorem~\ref{thm:transfer}, assuming Lemmas~\ref{lem:two-path-visit}--\ref{lem:private-block}]
By \eqref{eq:moment-first} and \eqref{eq:moment-second}, it suffices to prove \eqref{eq:first-collision-bound} and \eqref{eq:cross-root-collision}.
Take the first repeated candidate type and its earlier occurrence.
The paths to the two selected occurrences form a first-collision witness.
If their connecting path has at most $R$ incidence edges, its 
image closes to a simple incidence cycle of length at most $R$:
all interior candidate and resource types are distinct, while
the two selected candidates have the same type.  Let $\mathcal C_R$ be
the set of candidate types on such cycles, and let $\mathcal B_R$
be the event that a uniform-root cover query exposes a type in
$\mathcal C_R$.  Since $|\mathcal C_R|=M\chi_R$,
Lemma~\ref{lem:one-label-exposure} gives
\[
 \Prb(\mathcal B_R)
 \le\sum_{A\in\mathcal C_R}p_A
 \le\frac{C_r|\mathcal C_R|R_0^{H_r}}N
 \le C_r\chi_RR_0^{H_r+1}.
\]

Let $\mathcal W_R$ be the set of long first-collision witnesses,
rooted at arbitrary resource types, whose selected paths contain no
type in $\mathcal C_R$.  For each such witness $W$ rooted at $v$,
write $\mathcal V_W$ for the event that the cover query rooted at $v$
exposes its two selected candidate occurrences.  If a first repetition
is not accounted for by $\mathcal B_R$, it gives a witness in
$\mathcal W_R$.  Thus
$p_{\mathrm{rep}}\le\Prb(\mathcal B_R)+N^{-1}
\sum_{W\in\mathcal W_R}\Prb(\mathcal V_W)$.
For each unlabelled shape, multiply the count in
Lemma~\ref{lem:private-block} by the visitation bound in
Lemma~\ref{lem:two-path-visit}, average over roots, and sum shapes
and lengths.  There are at most $K_r^{L+1}$ shapes for
$h,\ell_1,\ell_2$, so
\[
 \begin{aligned}
 \frac1N\sum_{W\in\mathcal W_R}\Prb(\mathcal V_W)
 &\le C_rR_0^7B^{-\kappa}\sum_{h,\ell_1,\ell_2\ge0}
   \frac{(K_r^2B\mu_D)^{h+\ell_1+\ell_2}}{h!\ell_1!\ell_2!}\\
 &=C_rR_0^7B^{-\kappa}\exp(3K_r^2B\mu_D).
 \end{aligned}
\]
Since $B\mu_D\le\frac32c_r\log R_0$, this is at most
$C_rR_0^{7+(9/2)c_rK_r^2}B^{-\kappa}$.
Choose any integer
$\alpha_r>\max\{H_r+1,\,7+(9/2)c_rK_r^2\}$.
Combining the two cases establishes \eqref{eq:first-collision-bound}.

Independence of the two roots and their occurrence priorities gives
\[
 p_{\mathrm{shared}}\le\sum_{A\in E(\Gamma)}p_A^2
 \le\frac{C_rMR_0^{2H_r}}{N^2}.
\]
Since $rM=\sum_vd_v$ and $d_v\le3D/2$,
$M/N\le C_rD$, whence
$M/N^2\le C_rR_0^2/M$.
Taking any integer $\gamma_r>2H_r+2$ establishes \eqref{eq:cross-root-collision}.

Substituting \eqref{eq:first-collision-bound} and \eqref{eq:cross-root-collision} into \eqref{eq:moment-first} and \eqref{eq:moment-second} proves
\eqref{eq:collision-moment-first}--\eqref{eq:collision-moment-second}.
\end{proof}

\subsection{Visitation and exposure}\label{sec:visitation-exposure}

\begin{proof}[Proof of Lemma~\ref{lem:two-path-visit}]
Fix $W$ before sampling priorities.  Let $I(W)$ be the required
invoked candidate occurrences, and let $\mathcal V_W$ be the event
that the query exposes $x,y$.  Write $E(W)$ for the steps $A\to C$
between successive required invocations; thus
$|E(W)|=h+\ell_1+\ell_2$.  At most one candidate invocation below
the root has two required successors.  Let $E_{\rm br}$ consist of
its outgoing steps, if any, so $|E_{\rm br}|\le2$.

Fix the vector $U_I=(U_C)_{C\in I(W)}$ at
$t=(t_C)_{C\in I(W)}\in[0,1]^{I(W)}$.  Write $\Prb_{\rm off}$
for probability over the remaining independent occurrence priorities,
and $\mathcal V_W(t)$ for the exposure event with $U_I=t$.
Put $\mathcal D_W=\{t:t_C<t_A\text{ for all }(A,C)\in E(W)\}$.
The event $\mathcal V_W(t)$ is impossible unless $t\in\mathcal D_W$.

Set $w(s)=c_rQ_D(s)^{r-1}=c_r/(1+(r-1)Ds)$, so that
$\mu_D=\int_0^1w(s)\,ds$.  Consider a step $A\to C$ with
$C$ the only required successor of $A$.  Let $p$ be the incoming
resource of $A$, and $u$ the forward resource whose scanner contains
$C$.  For $C$ to be invoked at priority $t_C$, all other entries of
priority below $t_C$ in the merged forward scanner of $A$ must
return false.  Given $U_I=t$, these answers depend only on the
candidate branches other than $A$ and $C$ at $u$, and other than $A$
at each $z\in A\setminus\{p,u\}$.  By the cavity product
\eqref{eq:cavity-root-survival}, their joint probability is
\[
 \begin{aligned}
 &\frac{q_u(t_C)}{f_{A\to u}(t_C)f_{C\to u}(t_C)}
   \prod_{z\in A\setminus\{p,u\}}
      \frac{q_z(t_C)}{f_{A\to z}(t_C)}\\
 &\qquad\le \frac{20}{9}Q_D(t_C)
       \left(\frac53Q_D(t_C)\right)^{r-2}
       =w(t_C).
 \end{aligned}
\]
Indeed, those branches are distinct in the incidence cover, even
if some have the same  candidate type.  For different ordinary
steps, they lie in disjoint subtrees off the selected paths, so their
scanner outcomes are conditionally independent.

We discard all root-scanner conditions and the conditions at the
possible branching invocation.  By the preceding calculation,
\[
 \begin{aligned}
 \Prb_{\rm off}(\mathcal V_W(t))
 &\le\mathbf1_{\mathcal D_W}(t)
      \prod_{(A,C)\in E(W)\setminus E_{\rm br}}w(t_C)\\
 &\le \left(\frac{R_0}{c_r}\right)^2
      \mathbf1_{\mathcal D_W}(t)
      \prod_{(A,C)\in E(W)}w(t_C),
 \end{aligned}
\]
since $w(s)\ge c_r/R_0$ and $|E_{\rm br}|\le2$.

Integrate over the independent priorities in $I(W)$.  Dropping
ordering constraints between the common chain and its two
continuations allows three decreasing-priority integrals; the
at most two root-invoked candidates contribute unweighted integrals
of at most one.  Hence
\[
 \begin{aligned}
 \Prb(\mathcal V_W)
 &=\int_{[0,1]^{I(W)}}\Prb_{\rm off}(\mathcal V_W(t))
                     \prod_{C\in I(W)}dt_C\\
 &\le \left(\frac{R_0}{c_r}\right)^2
      \int_{\mathcal D_W}\prod_{(A,C)\in E(W)}w(t_C)
                            \prod_{C\in I(W)}dt_C\\
 &\le \left(\frac{R_0}{c_r}\right)^2
      \frac{\mu_D^{h+\ell_1+\ell_2}}{h!\,\ell_1!\,\ell_2!}
 \le C_rR_0^3\frac{\mu_D^{h+\ell_1+\ell_2}}{h!\,\ell_1!\,\ell_2!}.
 \end{aligned}\qedhere
\]
\end{proof}

\begin{proof}[Proof of Lemma~\ref{lem:one-label-exposure}]
The root scanner exposes $A$ only if the uniform root is one of its
$r$ resources.  Otherwise a resource $u\in A$ is registered by some
invoked candidate $Y\ni u$, with at most $rd_*$ choices for $(u,Y)$.
For a prescribed path $P$ of $j$ successive invocations leading to
$Y$, let $\mathcal V_P$ be the event that the path is followed.  The
ordinary-step calculation in the proof of Lemma~\ref{lem:two-path-visit}
(applied to this single path) and integration over its decreasing
priorities give
\[
 \Prb(\mathcal V_P)
 \le\int_{1>t_1>\cdots>t_j>0}
          \prod_{i=1}^jw(t_i)\,dt_1\cdots dt_j
 =\frac{\mu_D^j}{j!}.
\]
The first candidate has an unweighted priority integral of at most
one.  There are at most $C_r(K_rB)^j$ possible paths, including the
bounded choice of root incidence.  Since $d_*=O_r(R_0)$ and
$B\mu_D\le\frac32c_r\log R_0$, take any fixed integer
$H_r>5+3c_rK_r/2$.  Averaging over the $N$ root types and summing
gives
\[
 p_A\le\frac rN+\frac{C_rrd_*}{N}e^{K_rB\mu_D}
 \le\frac{C_rR_0^{H_r}}N.\qedhere
\]
\end{proof}

\subsection{Anchored witness counting}\label{sec:anchored-counting}

\begin{proof}[Proof of Lemma~\ref{lem:private-block}]
Let $z$ be the last common node on the paths to $x,y$, and choose
the longer arm with a fixed tie-break.  Since the distance from $x$
to $y$ exceeds $4\kappa+8$, this arm contains more than
$2\kappa+4$ incidence edges.  Its terminal occurrence, say $x$, is
exposed in the scanner of a resource $u$ born from an invoked candidate
$Y$.  The last $\kappa$ successive-invocation steps before $Y$ form
a block
\[
 A_0,u_1,A_1,\ldots,u_\kappa,A_\kappa=Y
\]
strictly after $z$, with $x$ exposed through $Y-u-x$.

Retain the root, the incidence choices outside the open interior of
this block, and the candidate type $A$ of the other selected occurrence
$y$.  Since $x$ has the same type $A$, specifying its terminal scanner
requires choosing $u\in A$ and $Y\ni u$, in at most $rd_*$ ways.
Thus the retained data determine both boundary candidate types
$A_0,Y$.  There are $N$ choices for the root, at most two initial
candidates and two terminal scanner entries giving at most four
degree-sized choices, and at most $K_rB$ incidence choices for each
of the remaining $L-\kappa$ invocation steps.  Hence there are at
most $C_rNR_0^4(K_rB)^{L-\kappa}$ encodings.

The missing interior is unique.  Otherwise the same boundary
candidates would be connected by two distinct simple  incidence
paths of length $2\kappa$.  Their union contains an incidence cycle
of length at most $4\kappa\le R$ containing a candidate type on the
selected path, contrary to the hypothesis.  Once the  path is
fixed, its lift into the typed cover is unique from the occurrence of
$A_0$ and its incoming incidence.  Hence the exterior encoding is
injective, and the count is at most
$C_rNR_0^4(K_rB)^LB^{-\kappa}$.
\end{proof}

\part{Application to hypergraph removal}

In this part we prove the removal result for a fixed $k$-graph $F$.
In Section~\ref{sec:reduction} we state our result and deduce it from the general
random greedy matching theorem, modulo hypothese verified in the remaining sections:
the stability hypothesis is verified in Section~\ref{sec:persistent-chains}
and the short cycle hypothesis in Section~\ref{sec:cycles}.

\section{The removal theorem and handoff} \label{sec:reduction}

We now apply the random greedy matching theorem to the removal process 
for a fixed strictly $k$-balanced $k$-graph $F$ from $K_n^{(k)}$.  At each step a uniformly random
surviving copy of $F$ is selected and all its edges are deleted.
Write $H_{\mathrm{fin}}$ for the terminal $k$-graph.
Let
\[
 m:=v(F),\qquad r:=e(F),\qquad
 \rho_F:=\frac{r-1}{m-k}.
\]
The hypothesis that $F$ is \emph{strictly $k$-balanced} means that
$r\ge3$ and every proper subhypergraph $F'\subsetneq F$ with at least
two edges satisfies
\[
 \frac{e(F')-1}{v(F')-k}<\rho_F.
\]

\begin{theorem}[Terminal $L^2$ asymptotic]
\label{thm:main}
Let $F$ be a fixed strictly $k$-balanced $k$-uniform hypergraph, with
$m=v(F)$, $r=e(F)$, and $\rho_F=(r-1)/(m-k)$.  Define
\[
 C_F=\frac1{k!}
 \left(\frac{r(r-1)k!}{\operatorname{aut}(F)}\right)^{-1/(r-1)}.
\]
Then
\[
 \E\left[\left(\frac{e(H_{\mathrm{fin}})}{n^{k-1/\rho_F}}-C_F\right)^2\right]
 \longrightarrow0.
\]
In particular, $e(H_{\mathrm{fin}})=(C_F+o_{\Prb}(1))n^{k-1/\rho_F}$.
\end{theorem}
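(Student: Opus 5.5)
The plan is to freeze the removal process at a handoff density and apply Theorem~\ref{thm:fixed-host} to the frozen candidate hypergraph. Run the $F$-removal process for a deterministic number $i_\star$ of steps, reaching edge density $p_\star\sim n^{-1/\rho_F+\eta}$ for a small fixed $\eta>0$. At that point let $\Gamma$ have resources the surviving $k$-edges and candidates the surviving copies of $F$. Then $\Gamma$ is $r$-uniform and simple, and $r\ge3$. By the memoryless description in Subsection~\ref{subsec:freeze}, the rest of the process is exactly random greedy matching in $\Gamma$ with independent continuous priorities, and $e(H_{\mathrm{fin}})=Z_\Gamma$.

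The first step is to check that the normalisation gives $C_F$. On the handoff event, known trajectory results (Bohman--Frieze--Lubetzky, Joos--K\"uhn) give
\[
 N=(1+o(1))\binom nk p_\star,\qquad D=(1+o(1))\frac{r\,k!}{\operatorname{aut}(F)}n^{m-k}p_\star^{r-1}.
\]
The second formula counts copies of $F$ through a fixed edge: there are $r\,k!\,n^{m-k}/\operatorname{aut}(F)$ placements, and each remaining edge survives with probability $p_\star$. Then
\[
 NQ_D(1)\sim \frac{n^kp_\star}{k!}\Bigl((r-1)\tfrac{rk!}{\operatorname{aut}(F)}n^{m-k}p_\star^{r-1}\Bigr)^{-1/(r-1)}=C_Fn^{k-1/\rho_F}.
\]
The factor $p_\star$ cancels, which is why the answer does not depend on the handoff density.

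The second step is to check the hypotheses of Theorem~\ref{thm:fixed-host} on a handoff event $\mathcal G$ with $\Prb(\mathcal G^c)=o(1)$.
\begin{itemize}
\item $N\le D^K$ holds because $D=n^{\Theta(\eta)}$ is polynomial in $n$.
\item Near-regularity needs $\delta_n\le n^{-c}$ for some $c=c(\eta)>0$. The trajectory estimates come with polynomial error terms, so condition~(ii) holds once $q$ is large: $N^{1/q}$ is then smaller than $n^{c/2}$.
\item Stability follows from the strategy of Subsection~\ref{subsec:mixing}, generalised. For each $(k-1)$-set $S$, define a link walk on the edges containing $S$, moving through copies of $F$. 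Persistent chain counts in the handoff should show that this walk mixes, giving $\|\mathsf T-\mathsf S\|_{q\to q}\le n^{-c'}$ by interpolation. Here $\mathsf S$ is a star-averaging operator of the form $CC^*$. The factorisation $2C^*C=I+P$ (or its $r$-uniform analogue $C^*C=\frac1{r-1}(\ldots)$) gives $\|e^{-(r-1)t\mathsf S}\|_{q\to q}\le1+(r-1)t$. Duhamel and Gronwall then transfer this to $\mathsf T$ over $t\le T_D=O(\log n)$.
\item The short-cycle condition~(iii) needs $\chi_R(\Gamma)\le n^{-c''}$ with $c''$ large relative to $A_r\eta$ and $2\eta/(r-1)$. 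An incidence cycle of bounded length among copies of $F$ spans a configuration with strictly fewer free vertices than a tree of copies. Strict $k$-balancedness gives a density gain of a power of $n$ over $p_\star$. A first-moment estimate, via the trajectory bounds on extension counts, then bounds the expected number of candidates on such cycles by $n^{-c}M$ with $c$ independent of $\eta$. Taking $\eta$ small makes $D^{A_r}$ affordable.
\item $D^{B_r}/M$ is tiny because $M$ is polynomially large.
\end{itemize}
Finally, Theorem~\ref{thm:fixed-host} applies conditionally on $G_\star\in\mathcal G$. Removing the conditioning uses $e(H_{\mathrm{fin}})\le\binom nk$ together with a super-polynomially small $\Prb(\mathcal G^c)$. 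If only $o(1)$ is available, one instead needs $e(H_{\mathrm{fin}})/n^{k-1/\rho_F}$ bounded in $L^{2+\epsilon}$, or else uses the BFL/JK upper bound $n^{k-1/\rho_F+o(1)}$ with high probability.

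I expect the stability verification to be the main obstacle. It requires designing the right link walk for general $F$, which is no longer an edge-of-triangle walk, because the $r-1$ other edges of a copy of $F$ meet the resource in various intersection patterns. One also has to derive a polynomial mixing bound for $K^\ell$ from chain counts that persist in the handoff. My first attempt would decompose $\mathsf T$ by the intersection type $|e\cap e'|$ and treat each type as an averaging through the shared set. Chain-of-copies counts, which are strictly balanced extensions, would give $\ell$-step mixing, and the $CC^*$ contractivity argument would then be applied type by type.
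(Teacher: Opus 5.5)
Most of your outline is the paper's proof. This includes the freeze at $p_\star=n^{-1/\rho_F+\eta}$, the cancellation of $p_\star$ in $NQ_D(1)\sim C_Fn^{k-1/\rho_F}$, near-regularity from the Joos--K\"uhn counts, the cyclic-union density surplus for short cycles, and un-conditioning via a failure probability $\exp\{-(\log n)^{4/3}\}$. For short cycles, note that cyclic unions need not be balanced, so the paper counts them through a densest strictly balanced subtemplate plus a supercritical extension. Your final paragraph also matches the paper: you decompose $\mathsf T$ by intersection size and approximate it by $\mathsf T_0=\sum_j\lambda_j\mathsf U_j^{(k)}\mathsf D_j^{(k)}$ using fibre mixing from $F$-chain counts. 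A link walk through $(k-1)$-sets is not the right object, since transitions occur for every $j\le s$ with $\lambda_j>0$, and $s$ may be smaller than $k-1$. The genuine gap is the semigroup bound for $\mathsf T_0$ when $k\ge3$.

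The $CC^*$ argument does not extend ``type by type.'' With $C=\mathsf U_j^{(k)}$ you get
\[
 C^*C=\tbinom{k}{j}^{-1}I+\bigl(1-\tbinom{k}{j}^{-1}\bigr)P
\]
for a Markov $P$ on $j$-sets, because the resampled $j$-subset returns to $J$ only with probability $\binom{k}{j}^{-1}$. The series trick needs identity weight at least $1/2$, which holds only for $k=2$. For $k\ge3$ it gives only $\|e^{-uC^*C}\|_{q\to q}\le e^{u(1-2/\binom{k}{j})}$. Over the horizon $u\asymp\log D$ this is a positive power of $D$, not the growth $C(1+t)^L$ that Theorem~\ref{thm:fixed-host} requires. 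Positive semidefiniteness gives an $L^2$ contraction, but you need large $q$ to absorb $N^{1/q}$, and there is no $L^q$ analogue. Moreover, the operators for different $j$ need not commute on $H_\star$, so separate per-type bounds would not combine into one for $e^{-u\mathsf T_0}$.

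The missing idea is the paper's algebraic reduction. Factor every type through the top level, $\mathsf T_0=\mathsf U\mathsf R_s\mathsf D$, and use $e^{-uAB}=I-A\int_0^ue^{-vBA}B\,dv$ to pass to $e^{-u\mathsf R_s\mathsf D\mathsf U}$ on $s$-sets. Then compare $\mathsf R_s$ and the shadow walk $\mathsf D\mathsf U$ with their complete-$k$-graph counterparts. The second comparison needs a separate count of shadow chains of surviving edges, which your plan does not mention. On $\binom{[n]}{s}$ the complete-graph operators are intersection-invariant, so they commute and $H=\overline{\mathsf R}_s\,\overline{\mathsf D\mathsf U}$ has at most $s+1$ distinct eigenvalues in $[0,1]$. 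Newton interpolation then gives $\|e^{-uH}\|_{q\to q}\le(1+2u)^s$ in every $L^q$. Duhamel--Gronwall with polynomially small operator errors transfers this, as $(1+u)^{s+1}$, back to $\mathsf T$. For graphs ($k=2$) your route does work, as in the paper's graph case, since the $j=0$ term is a global average and commutes with the $j=1$ term.
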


\subsection{The standard matching representation}\label{subsec:removal-matching}

Let $H(i)$ be the random $F$-removal process starting from
$H(0)=K_n^{(k)}$, kept constant after termination, and let
$\mathcal F_i$ be its history through step $i$.  Fix $\eta>0$ sufficiently
small, to be chosen in the proof, and put
\[
 p_\star=n^{-1/\rho_F+\eta},\qquad
 i^\star=\left\lfloor\frac{\binom nk-p_\star n^k/k!}{r}\right\rfloor,
 \qquad H_\star=H(i^\star).
\]
On the event that the process reaches $i^\star$, form an $r$-uniform
hypergraph $\Gamma_\star$ whose vertices are the edges of $H_\star$
and whose candidates are the sets of $r$ edges belonging to copies
of $F$ in $H_\star$.  Strict balance excludes isolated vertices of
$F$, so a copy is determined by its edge set and $\Gamma_\star$ is simple.
Write $N,M$ for the numbers of resources and candidates, and $d_e$
for the degree of a resource $e$.  On the event of reaching the handoff,
\begin{equation}\label{eq:resource-count}
 N=\binom nk-ri^\star=\frac{p_\star n^k}{k!}+O(1).
\end{equation}

Conditional on $\mathcal F_{i^\star}$ and on reaching the handoff,
the continuation is random greedy matching on $\Gamma_\star$:
we assign independent continuous priorities to the remaining candidate copies
of $F$ and accept them in increasing order whenever their edges have
not been deleted. The resources are the edges of $H_{\mathrm{fin}}$.

Put $a=m-k$, $b=r-1$, and $\rho=\rho_F=b/a$, and define
\begin{equation}\label{eq:candidate-reference-degree}
 D_\star:=\frac{rk!}{\operatorname{aut}(F)}n^ap_\star^b.
\end{equation}

We use the following notation for counts at the handoff.
A \emph{rooted template} $(A,I)$ is a $k$-graph $A$ with a
specified vertex set $I\subseteq V(A)$.  Given an injection
$\psi:I\hookrightarrow[n]$, let $\Phi_{A,\psi}$ count injective
extensions into $H_\star$ that realise every edge of $A$
not contained in $I$; nonedges are not tested.  Write
\[
 \widehat p(i)=\frac{k!(\binom nk-ri)}{n^k},\qquad
 \widehat\phi_{A,I}(i)
 =n^{|V(A)|-|I|}\widehat p(i)^{e(A)-e(A[I])}.
\]
A rooted subtemplate $(B,I)\subseteq(A,I)$ contains $I$
and may omit other vertices or edges.  For
$I\subsetneq U\subseteq V(A)$ set
\[
 d_{A,I}(U)=\frac{e(A[U])-e(A[I])}{|U|-|I|}.
\]
We use the balance conditions
\[
\begin{aligned}
 (A,I)\text{ balanced}
 &\iff d_{A,I}(U)\le d_{A,I}(V(A))
 &&(I\subsetneq U\subseteq V(A)),\\
 (A,I)\text{ strictly balanced}
 &\iff d_{A,I}(U)<d_{A,I}(V(A))
 &&(I\subsetneq U\subsetneq V(A)).
\end{aligned}
\]
These inequalities also control subtemplates with omitted tested
edges, since omitting edges decreases their rooted densities.

The Joos--K\"uhn counting estimates, recorded in
Section~\ref{sec:handoff-inputs}, hold on an event $\mathcal G_\star$
of probability at least $1-\exp\{-(\log n)^{4/3}\}$.  On this event
the process reaches $i^\star$ and the candidate degrees are nearly
constant, as follows.

\begin{corollary}[Near-regular candidate degrees]
\label{cor:near-regular-handoff}
On $\mathcal G_\star$,
\begin{equation}\label{eq:resource-degrees}
 d_e=(1+n^{-\Omega_{F,\eta}(1)})D_\star
\end{equation}
uniformly in $e\in E(H_\star)$.  Consequently
$M=(1+o(1))ND_\star/r$ and $D_\star=n^{b\eta+o(1)}$.
\end{corollary}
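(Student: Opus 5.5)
The plan is to read the degree of a resource off one of the rooted extension counts recorded on $\mathcal G_\star$. Fix an edge $e\in E(H_\star)$, regarded as a resource of $\Gamma_\star$. A candidate containing $e$ is the edge set of a copy of $F$ in $H_\star$ that uses $e$. Such copies are counted as follows. Choose an edge $f\in E(F)$ and a bijection from $f$ to the $k$ vertices of $e$. Then count injective extensions of the remaining $m-k$ vertices of $F$ realising the other $r-1$ edges. Finally divide by the overcount coming from automorphisms.

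Concretely, with the rooted template $(F,f)$ and $\psi:f\to e$ a bijection, I would write
\[
 d_e=\frac{1}{\operatorname{aut}(F)}\sum_{f\in E(F)}\sum_{\psi:f\to e}\Phi_{F,\psi}.
\]
Each labelled embedding of $F$ that sends some edge onto $e$ is counted once for each pair $(f,\psi)$ it induces, namely exactly once. Dividing by $\operatorname{aut}(F)$ then converts labelled embeddings into copies. There are $r\cdot k!$ pairs $(f,\psi)$ in total. Since $F$ has no isolated vertices, distinct copies give distinct candidates, as noted above, so this formula is exactly $d_e$.

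Next I would invoke the Joos--K\"uhn counting estimates holding on $\mathcal G_\star$. These should give
\[
 \Phi_{F,\psi}=(1+n^{-\Omega(1)})\,\widehat\phi_{F,f}(i^\star)
\]
for every strictly balanced rooted template in a fixed finite family, uniformly over $\psi$. Here $\widehat\phi_{F,f}=n^{m-k}\widehat p^{\,r-1}$, and $\widehat p(i^\star)=p_\star(1+O(n^{-k}p_\star^{-1}))$ by \eqref{eq:resource-count}. To apply the estimate I must check that $(F,f)$ is strictly balanced as a rooted template. For $f\subsetneq U\subsetneq V(F)$, strict $k$-balance applied to $F[U]$ gives $(e(F[U])-1)/(|U|-k)<\rho_F=d_{F,f}(V(F))$, provided $F[U]$ has at least two edges. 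If $F[U]$ has only the edge $f$, then $d_{F,f}(U)=0<\rho_F$. Summing, $d_e=(rk!/\operatorname{aut}(F))\,n^ap_\star^b(1+n^{-\Omega(1)})=(1+n^{-\Omega(1)})D_\star$, with the error uniform in $e$.

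The consequences then follow directly. First, $rM=\sum_e d_e=(1+o(1))ND_\star$. Second, substituting $p_\star=n^{-1/\rho+\eta}$ gives $n^ap_\star^b=n^{a-b/\rho+b\eta}=n^{b\eta}$, so $D_\star=n^{b\eta+o(1)}$. That the process reaches $i^\star$ on $\mathcal G_\star$ is part of the recorded handoff event.

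The only real obstacle is making sure the recorded estimates cover this rooted template, with a polynomially small relative error, at the exact density $p_\star$. Here $\widehat\phi_{F,f}=n^{b\eta}$ is only polynomially large, and the Joos--K\"uhn error terms must be smaller than this. I would check that the error exponent they give in this regime beats $n^{-c\eta}$ for $\eta$ small, which is presumably where the requirement ``$\eta$ sufficiently small'' enters.
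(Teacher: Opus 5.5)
Your proposal is correct and matches the paper's proof: it uses the same $\operatorname{aut}(F)$-weighted sum of the $rk!$ rooted counts $\Phi_{F,\sigma}$, estimates each as $(1+n^{-\Omega_{F,\eta}(1)})n^ap_\star^b$ on $\mathcal G_\star$, and then uses $rM=\sum_e d_e$ and $n^ap_\star^b=n^{b\eta}$. The paper simply cites the rooted copy count in Proposition~\ref{prop:JK-handoff}(i), whereas you check that $(F,f)$ is balanced with polynomially large trajectory $n^{b\eta}$; that check is exactly the hypothesis of Proposition~\ref{prop:JK-handoff}(ii) together with Lemma~\ref{lem:JK-cutoff-adapter}, which resolves the obstacle you flag.
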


\begin{proof}
For each role edge $f\in E(F)$ and bijection $\sigma:f\to e$,
let $\Phi_{F,\sigma}$ count extensions to a copy of $F$ whose other
edges survive in $H_\star$.  Every candidate through $e$ occurs
$\operatorname{aut}(F)$ times in the sum
\[
 d_e=\frac1{\operatorname{aut}(F)}
 \sum_{f\in E(F)}\sum_{\sigma:f\overset\sim\longrightarrow e}
 \Phi_{F,\sigma}.
\]
Proposition~\ref{prop:JK-handoff}(i) estimates each of these $rk!$
terms by $(1+n^{-\Omega_{F,\eta}(1)})n^ap_\star^b$.
The conclusions follow from $rM=\sum_ed_e$.
\end{proof}

\subsection{Proof of the theorem}

\begin{proof}[Proof of Theorem~\ref{thm:main}, assuming Theorems~\ref{thm:resource-semigroup} and~\ref{thm:cyclic-sparsity}]
We apply Theorem~\ref{thm:fixed-host} conditionally on the handoff.
Its hypotheses are near-regular resource degrees, stability of the
resource walk and short-cycle sparsity, with the stated quantitative
smallness conditions.  The degree estimate is
Corollary~\ref{cor:near-regular-handoff}; the other two are
Theorems~\ref{thm:resource-semigroup} and~\ref{thm:cyclic-sparsity},
proved below.  Assuming them, the first calculation determines the constant: by
\eqref{eq:resource-count} and \eqref{eq:candidate-reference-degree},
\[
 NQ_{D_\star}(1)
 \sim\frac{n^kp_\star}{k!}
 \left(\frac{r(r-1)k!}{\operatorname{aut}(F)}
       n^{m-k}p_\star^{r-1}\right)^{-1/(r-1)}
 =C_F n^{k-1/\rho_F}.
\]
In particular, the handoff density cancels.
Fix the constants $A_r,B_r,R_r$ in Theorem~\ref{thm:fixed-host},
and choose $\eta>0$ sufficiently small for the following estimates.
Using the handoff counts recorded in Section~\ref{sec:handoff-inputs},
Theorem~\ref{thm:cyclic-sparsity} gives
$\chi_{R_r}(\Gamma_\star)\le n^{-1/b+C_{F,R_r}\eta}$,
while Corollary~\ref{cor:near-regular-handoff} gives
$D_\star=n^{b\eta+o(1)}$ and
$M=n^{k-1/\rho_F+r\eta+o(1)}$.
Since $k-1/\rho_F=(kr-m)/(r-1)>0$ (as $F$ is not a matching),
we may take $\eta$ small enough that
$-1/b+(C_{F,R_r}+bA_r)\eta<-2\eta$ and
$-(k-1/\rho_F)+(bB_r-r)\eta<-2\eta$.
Consequently
\[
 D_\star^{A_r}\chi_{R_r}(\Gamma_\star)
 +\frac{D_\star^{B_r}}M
 =o(D_\star^{-2/b}).
\]

On $\mathcal G_\star$ the corollary also gives
$\delta_n:=\max_e|d_e/D_\star-1|\le n^{-c_0}$
for some $c_0=c_0(F,\eta)>0$, and
$N=n^{k-1/\rho_F+\eta+o(1)}\le D_\star^K$ for a fixed $K$.
Put $s=\max\{|f\cap g|:f,g\in E(F),\ f\ne g\}$.
Theorem~\ref{thm:resource-semigroup} gives, for every fixed $q>2$,
\[
 \|e^{-(r-1)t\mathsf T}\|_{L^q(\pi)\to L^q(\pi)}
 \le C_{F,\eta,q}(1+t)^{s+1}
 \qquad(0\le t\le T_{D_\star}).
\]
Choose $q$ sufficiently large that
$(k-1/\rho_F+\eta+1)/q<\tfrac12\min\{c_0,b\eta\}$.
Then
\[
 N^{1/q}\left(\delta_n+\frac{\log D_\star}{D_\star}\right)
 (1+\log D_\star)^{2s+4}=o(1).
\]
These are the three hypotheses of Theorem~\ref{thm:fixed-host}.

Put $N_\star=\binom nk-ri^\star$, so $N=N_\star$ on
$\mathcal G_\star$.  The matching theorem and the conditional
random-priority representation above give a deterministic
$\epsilon_n\to0$ such that
\[
 \E\left[\left.
  \left(\frac{e(H_{\mathrm{fin}})}{N_\star Q_{D_\star}(1)}-1\right)^2
  \right|\mathcal F_{i^\star}\right]\le\epsilon_n
 \quad\text{on }\mathcal G_\star.
\]
Since $N_\star Q_{D_\star}(1)/n^{k-1/\rho_F}\to C_F$ and 
$\Prb(\mathcal G_\star^c) \le\exp\{-(\log n)^{4/3}\}$
we conclude that
\[
 \E\left[\left(
 \frac{e(H_{\mathrm{fin}})}{n^{k-1/\rho_F}}-C_F
 \right)^2\right] =o(1).
 \qedhere
\]
\end{proof}

\section{Stability of the resource walk}
\label{sec:persistent-chains}

This section verifies the stability hypothesis 
for the resource walk that moves from one surviving edge 
through a copy of $F$ to another edge of that copy.  
We first prove the stability estimate
assuming two mixing statements and an elementary operator bound.
We then deduce the mixing statements from counts of chains of copies of $F$.
Every error $n^{-c}$ below has a positive exponent depending on the
fixed parameters; its value may decrease from line to line.

\subsection{Proof modulo lemmas}

Recall that $r=e(F)$, that $d_e$ is the number of surviving
copies of $F$ containing the resource $e$, and that $M$ is the
number of such copies.  The resource walk $\mathsf T$ is reversible
for $\pi_k(e)=d_e/(rM)$.  As $F$ is strictly $k$-balanced we have
\[
1 \le s : =\max\{|f\cap g|:f,g\in E(F),\ f\ne g\} < k.
\]

\begin{theorem}[Resource stability]\label{thm:resource-semigroup}
For every fixed $2<q<\infty$, on $\mathcal G_\star$,
\begin{equation}\label{eq:resource-semigroup-goal}
 \|e^{-(r-1)t\mathsf T}\|_{L^q(\pi_k)\to L^q(\pi_k)}
 \le C_{F,\eta,q}(1+t)^{s+1}
\end{equation}
whenever $0\le t\le(r-1)^{-1}\log(1+(r-1)D_\star)$.
\end{theorem}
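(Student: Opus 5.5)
The plan follows the triangle argument of Section~\ref{subsec:mixing}. We first replace $\mathsf T$ by a star-averaging operator whose negative semigroup we control exactly. Then we transfer that control back to $\mathsf T$ by a Duhamel--Gronwall perturbation, using polynomially small operator errors that come from chain counts at the handoff.

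\emph{Step 1: decomposition by intersection size.} For a copy of $F$ through a resource $e$, the walk moves to another edge $e'$ of the copy. The role pair $(f,g)$ determines $j=|e\cap e'|\in\{0,1,\dots,s\}$. So we write
\[
\mathsf T=\sum_{j=0}^s w_j\mathsf T_j+O(n^{-c}),
\]
where $w_j$ is the proportion of ordered role pairs $f\ne g$ in $E(F)$ with $|f\cap g|=j$. Here $\mathsf T_j$ is the walk conditioned on intersection size $j$; the degree near-regularity of Corollary~\ref{cor:near-regular-handoff} makes the weights uniform up to $n^{-c}$. Conditioning on the retained $j$-set $J=e\cap e'$ expresses $\mathsf T_j$ through a ``link walk'' on the edges containing $J$. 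The star operator $\mathsf S_j$ replaces that link walk by its stationary law, and factorises as $\mathsf S_j=C_jC_j^*$:
\begin{itemize}
\item $C_j^*$ averages an edge function over the edges containing a $j$-set, with $\pi_k$-weights;
\item $C_j$ averages a $j$-set function over the $\binom kj$ $j$-subsets of an edge.
\end{itemize}
For $j=0$, $\mathsf S_0=\Pi$ is the global $\pi_k$-average. The mixing lemmas should be proved from the Joos--K\"uhn counts of chains of copies of $F$ (Proposition~\ref{prop:JK-handoff}), exactly as for link walks. They give $\|\mathsf T_j-\mathsf S_j\|_{2\to2}\le n^{-c}$. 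Riesz--Thorin against the trivial $\infty\to\infty$ bound $2$ then gives $\|\mathsf T-\mathsf S\|_{q\to q}\le n^{-c_q}$, where $\mathsf S=\sum_jw_j\mathsf S_j$.

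\emph{Step 2: transfer.} Suppose we have $\|e^{-(r-1)t\mathsf S}\|_{q\to q}\le C(1+t)^{s+1}$. Duhamel's formula, as in Section~\ref{subsec:mixing}, gives a Volterra inequality for $\Phi(t)=\|e^{-(r-1)t\mathsf T}\|_{q\to q}$. Gronwall then yields
\[
\Phi(t)\le C(1+t)^{s+1}\exp\bigl(O(n^{-c_q}(1+t)^{s+2})\bigr).
\]
Since $t=O(\log n)$, the exponential factor is $1+o(1)$. This proves \eqref{eq:resource-semigroup-goal}.

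\emph{Step 3: stability of $\mathsf S$ (the main obstacle).} Two structural facts are available.
\begin{itemize}
\item The global average $\Pi$ commutes with every $\mathsf S_j$, because each is $\pi_k$-stationary. Since $e^{-a\Pi}=I+(e^{-a}-1)\Pi$, the $j=0$ term contributes only a bounded factor.
\item For each $j\ge1$, the identity $2C^*C=I+P$ generalises to
\[
C_j^*C_j=\beta_jI+(1-\beta_j)P_j,\qquad \beta_j=\binom kj^{-1},
\]
with $P_j$ a Markov operator. Hence $e^{-tC_j^*C_j}$ is an $L^q$ contraction. The computation $z=C_j^*y$ then gives $\|e^{-tC_jC_j^*}\|_{q\to q}\le1+t$.
\end{itemize}
The difficulty is that the $\mathsf S_j$ need not commute with one another. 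I would handle the $s$ nontrivial terms by induction on the number of terms, aiming to lose one factor $(1+t)$ per term. Write $G_{\le j}=G_{<j}+w_jC_jC_j^*$. By variation of constants,
\[
e^{-tG_{\le j}}=e^{-tG_{<j}}-w_j\int_0^te^{-(t-u)G_{<j}}C_jC_j^*e^{-uG_{\le j}}\,du.
\]
The aim is to show that $C_j^*e^{-uG_{\le j}}$ stays bounded, rather than merely polynomially growing. For this I would try two routes:
\begin{itemize}
\item first, an accretivity argument in $L^q$: show $\langle G_{\le j}y,\,|y|^{q-2}y\rangle\ge0$ using that each $C_jC_j^*$ is a composition of conditional expectations;
\item failing that, compare the $j$-set functions $C_j^*y$ across different $j$ through the common refinement by $s$-sets, exploiting that the count hypotheses make the mixed products $C_i^*C_j$ nearly Markov.
\end{itemize}
The induction over $j=1,\dots,s$, together with the final factor, produces the exponent $s+1$.
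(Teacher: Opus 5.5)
\textbf{Verdict: genuine gap in Step~3.} Your Steps~1 and~2 agree with the paper. Your $C_jC_j^*$ is its $\mathsf U_j^{(k)}\mathsf D_j^{(k)}$. The comparison with $\mathsf T$ is its estimate (A), proved from fibre mixing for chains of copies of $F$, and the transfer is its Duhamel--Gronwall perturbation bound. Your two observations in Step~3 are also correct: factoring off the $j=0$ term, and $C_j^*C_j=\beta_jI+(1-\beta_j)P_j$. They are exactly what the paper uses for graphs, where $s=1$.

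For $k\ge3$, however, Step~3 fails at the point you yourself flag, and this is the heart of the theorem.

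\emph{Route (a) cannot succeed.} $L^q$-accretivity would make the semigroup an $L^q$ contraction, and this already fails for a single term. Take the complete bipartite graph $K_{3,3}$ with uniform weights. Then $C_1C_1^*=\tfrac12(E_X+E_Y)$, the average of the two conditional expectations onto functions of one endpoint. Let $y(x,x')=g(x)g(x')$ with $g=(2,-1,-1.1)$, $q=4$, and bars denoting averages over the three points. Then $\langle C_1C_1^*y,|y|^2y\rangle=\bar g\,\overline{g^3}\,\|g\|_4^4<0$.

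\emph{Route (b) is not yet an argument.} The mixed products $C_i^*C_j$ are exactly Markov, so being ``nearly Markov'' carries no information. Nothing in your plan yields the bound $\sup_u\|C_j^*e^{-uG_{\le j}}\|_{q\to q}<\infty$ that your induction needs. The intertwining $C^*e^{-uCC^*}=e^{-uC^*C}C^*$ behind the single-term bound breaks once a noncommuting term is added. Soft arguments (interpolating the $L^2$ contraction against $L^\infty$) give only $e^{O(t)}$, a power of $D_\star$, which is fatal over $t\asymp\log D_\star$.

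\textbf{The missing idea is to import exact spectral structure on the $s$-shadow.}
\begin{enumerate}
\item \emph{Factor through $s$-sets.} Every $j\le s$, so $\mathsf U_j^{(k)}=\mathsf U\mathsf U_j^{(s)}$ and $\mathsf D_j^{(k)}=\mathsf D_j^{(s)}\mathsf D$. Hence $\mathsf S=\mathsf U\mathsf R_s\mathsf D$ with $\mathsf R_s=\sum_j\lambda_j\mathsf U_j^{(s)}\mathsf D_j^{(s)}$. The identity $e^{-uAB}=I-A\int_0^ue^{-vBA}B\,dv$ reduces the problem to $e^{-v\mathsf R_s\mathsf D\mathsf U}$ on $s$-sets, at the cost of one factor of $u$.
\item \emph{Compare with the complete $k$-graph.} Both $\mathsf R_s$ and the shadow walk $\mathsf D\mathsf U$ are compared with their complete-$k$-graph analogues. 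This uses the near-uniformity of $\pi_s$, which comes from $\rho(k-s)>1$. It also needs a second mixing statement (B) for $\mathsf D\mathsf U$, proved from counts of chains of surviving $k$-edges rather than copies of $F$. Your plan has no counterpart to (B).
\item \emph{Use the Johnson scheme.} The reference operators have $(S,T)$-entries depending only on $|S\cap T|$, so they lie in the commutative Johnson-scheme algebra. Their product $H$ is therefore a positive semidefinite Markov operator with at most $s+1$ distinct eigenvalues in $[0,1]$. Newton interpolation of $e^{-ux}$ at these eigenvalues gives $\|e^{-uH}\|_{q\to q}\le\sum_{h\le s}(2u)^h/h!\le(1+2u)^s$ for every $q$.
\item \emph{Conclude.} Perturbation and the $AB/BA$ step then give $C(1+u)^{s+1}$. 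So the exponent is the interpolation degree plus one integration, not one loss per intersection size.
\end{enumerate}
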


\medskip\noindent\textbf{Up and down averages.å}
For $0\le j\le k$, sample $e\sim\pi_k$ and then a uniformly chosen
$j$-set $J\subset e$.  The law of $J$ is
\begin{equation}\label{eq:shadow-pushforward}
 \pi_j(J)=\frac{1}{\binom kj}\sum_{e\supset J}\pi_k(e).
\end{equation}
Strict $k$-balance applied to two role edges meeting in $s$ vertices
implies
\begin{equation}\label{eq:shadow-density-gap}
 \rho(k-s)>1.
\end{equation}
Thus the one-edge handoff count and \eqref{eq:resource-degrees} give
\begin{equation}\label{eq:uniform-shadow-law}
 \pi_s(S)=\frac{1+O(n^{-c})}{\binom ns}
 \qquad(S\in\tbinom{[n]}s).
\end{equation}
The same pointwise comparison holds after conditioning on $J\subset S$.
The \emph{up} operator $\mathsf U_j^{(k)}$ lifts a function on
$j$-sets to resource edges by averaging over their $j$-subsets.
The \emph{down} operator $\mathsf D_j^{(k)}$ averages a function
on resources over those containing a prescribed $j$-set:
\begin{equation}\label{eq:resource-up-down}
 \begin{aligned}
 (\mathsf U_j^{(k)}g)(e)
   &=\frac1{\binom kj}\sum_{J\in\binom ej}g(J),\\
 (\mathsf D_j^{(k)}h)(J)
   &=\frac{\sum_{e\supset J}\pi_k(e)h(e)}
           {\sum_{e\supset J}\pi_k(e)}.
 \end{aligned}
\end{equation}
Values at $J$ outside the support of $\pi_j$ are immaterial.
These are adjoint conditional expectations for the indicated
measures, and are contractions on every $L^q$.
Thus $\mathsf U_j^{(k)}\mathsf D_j^{(k)}$ remembers a uniformly
chosen $j$-subset of the current resource and resamples a
resource containing it.  Abbreviate $\mathsf U=\mathsf U_s^{(k)}$
and $\mathsf D=\mathsf D_s^{(k)}$ for the largest intersection size $s$.

The probability that two ordered distinct role edges of $F$
meet in $j$ vertices is
\[
 \lambda_j=\frac{1}{r(r-1)}
 |\{(f,g)\in E(F)^2:f\ne g,\ |f\cap g|=j\}|.
\]
We have $\sum_{j=0}^s\lambda_j=1$.
For example, if $F=K_5^{(3)}$, each triple meets six of the
other nine triples in two vertices and three in one vertex.
Hence $\lambda_2=2/3$, $\lambda_1=1/3$, and the resource walk
is approximated by
\[
 \tfrac23\mathsf U_2^{(3)}\mathsf D_2^{(3)}
 +\tfrac13\mathsf U_1^{(3)}\mathsf D_1^{(3)}.
\]
The two terms represent the two possible amounts of retained
information in a resource transition.

Set
\[
 \mathsf T_0=\sum_{j=0}^s\lambda_j
              \mathsf U_j^{(k)}\mathsf D_j^{(k)}.
\]
We will show below that
\begin{equation}\tag{A}\label{eq:resource-walk-approx}
 \|\mathsf T-\mathsf T_0\|_{L^2(\pi_k)\to L^2(\pi_k)}\le n^{-c}.
\end{equation}

\medskip\noindent\textbf{The graph case from (A).}
Suppose $k=2$, so $s=1$.
Let $\Pi_1$ average under $\pi_1$.  Averaging over the empty
set is global averaging, so
\[
 \mathsf T_0=\mathsf U(\lambda_1I+\lambda_0\Pi_1)\mathsf D.
\]
On vertices, $\mathsf D\mathsf U=\tfrac12(I+\mathsf P)$,
where $\mathsf P$ selects a containing resource edge with weight
$\pi_k(e)$ and moves to its other endpoint.  It is a Markov
operator on $L^q(\pi_1)$, and hence
\[
 \|e^{-u\mathsf D\mathsf U}\|_{q\to q}
 \le e^{-u/2}\sum_{m\ge0}\frac{(u/2)^m}{m!}
      \|\mathsf P^m\|_{q\to q}\le1.
\]
Since $\mathsf D\mathsf U\Pi_1=\Pi_1\mathsf D\mathsf U=\Pi_1$,
\[
 e^{-u(\lambda_1I+\lambda_0\Pi_1)\mathsf D\mathsf U}
 =e^{-u\lambda_1\mathsf D\mathsf U}
   \bigl(I+(e^{-u\lambda_0}-1)\Pi_1\bigr),
 \qquad
 \|e^{-u(\lambda_1I+\lambda_0\Pi_1)\mathsf D\mathsf U}\|_{q\to q}\le2.
\]
For any bounded $A,B$, their power series give
\begin{equation}\label{eq:ABBA-identity}
 e^{-uAB}=I-A\int_0^u e^{-vBA}B\,dv.
\end{equation}
Apply this with $A=\mathsf U$ and
$B=(\lambda_1I+\lambda_0\Pi_1)\mathsf D$.
Both are contractions, so
$\|e^{-u\mathsf T_0}\|_{q\to q}\le1+2u$.

We will use the following perturbation estimate here and below.
If $\|e^{-uA}\|\le P(u)$ for a nondecreasing polynomial $P\ge1$
and $\|A-B\|\le\varepsilon$, Duhamel's formula gives
$f(u):=\|e^{-uB}\|\le P(u)+\varepsilon\int_0^uP(u-v)f(v)\,dv$.
Dividing by $P(u)$ and applying Gronwall yields
\begin{equation}\label{eq:polynomial-perturb}
 \|e^{-uB}\|
 \le P(u)\exp\left(\varepsilon\int_0^uP(v)\,dv\right).
\end{equation}
By (A), $\mathsf T-\mathsf T_0$ has $L^2$ norm $n^{-c}$.
Both are Markov operators; their $L^\infty$ norms are at most one,
so interpolation gives
$\|\mathsf T-\mathsf T_0\|_{q\to q}\le n^{-c_q}$
for each fixed $q>2$.  Apply \eqref{eq:polynomial-perturb}
with $P(u)=1+2u$.  Since $D_\star=n^{b\eta+o(1)}$,
the exponential factor is bounded for $u=O(\log D_\star)$.
Taking $u=(r-1)t$ proves the graph case of the theorem,
assuming (A).

\medskip\noindent\textbf{The shadow and stationary inputs.}
For general $F$, we need some further arguments.
For $j\le h\le k$, use the induced law $\pi_h$ from
\eqref{eq:shadow-pushforward} to define $\mathsf U_j^{(h)}$
and $\mathsf D_j^{(h)}$.  Conditional sampling in stages gives
\begin{equation}\label{eq:nested-conditional-operators}
 \mathsf U_j^{(k)}=\mathsf U\mathsf U_j^{(s)},
 \qquad
 \mathsf D_j^{(k)}=\mathsf D_j^{(s)}\mathsf D
 \quad(j\le s).
\end{equation}
Put
\begin{equation}\label{eq:resource-factorisation}
 \mathsf R_s=\sum_{j=0}^s\lambda_j
               \mathsf U_j^{(s)}\mathsf D_j^{(s)},
 \qquad \mathsf T_0=\mathsf U\mathsf R_s\mathsf D.
\end{equation}
Here $\mathsf D\mathsf U$ is the shadow walk: from an $s$-set
it chooses a containing surviving resource with its conditional
weight and then a uniform $s$-subset of that resource.
All these operators are Markov contractions.

As in the link-walk argument of Subsection~\ref{subsec:mixing},
we compare the actual conditional steps with ones sampled from
stationary laws.  Here the reference laws come from the uniform
incidence measure on the complete $k$-graph; we mark the resulting
operators with an overline.  On $\Omega_s=\binom{[n]}s$ define
\begin{equation}\label{eq:stationary-shadow-operators}
\begin{aligned}
 (\overline{\mathsf R}_s h)(S)
 &=\sum_{j=0}^s\frac{\lambda_j}{\binom sj\binom{n-j}{s-j}}
   \sum_{J\in\binom Sj}\sum_{\substack{T\in\Omega_s\\J\subset T}}h(T),\\
 (\overline{\mathsf D\mathsf U}h)(S)
 &=\frac{1}{\binom{n-s}{k-s}\binom ks}
   \sum_{\substack{e\in\binom{[n]}k\\S\subset e}}
   \sum_{T\in\binom es}h(T).
\end{aligned}
\end{equation}
Thus $\overline{\mathsf R}_s$ resamples from the uniform law
conditional on a chosen $j$-subset, while
$\overline{\mathsf D\mathsf U}$ uses uniform complete-graph
incidence sampling.  The latter still remembers the intermediate
$k$-edge; it is not a projection onto constants. We will show below that

\begin{equation}\tag{B}\label{eq:shadow-compare-goal}
 \|\mathsf D\mathsf U-\overline{\mathsf D\mathsf U}\|_{L^2(\pi_s)\to L^2(\pi_s)}
 \le n^{-c},
\end{equation}
\begin{equation}\tag{C}\label{eq:complete-semigroup-bound}
 \|e^{-u\overline{\mathsf R}_s\,\overline{\mathsf D\mathsf U}}\|_{L^q\to L^q}
 \le\sum_{h=0}^s\frac{(2u)^h}{h!}\le(1+2u)^s
 \qquad(1\le q\le\infty,\ u\ge0).
\end{equation}

\begin{proof}[Proof of Theorem~\ref{thm:resource-semigroup} for $k\ge3$, assuming (A)--(C)]
By \eqref{eq:uniform-shadow-law}, conditional resampling on the
$s$-slice is pointwise $1+O(n^{-c})$ close to its stationary reference.
Jensen's inequality, averaged over the random $j$-set, therefore gives
\begin{equation}\label{eq:shadow-R-compare}
 \|\mathsf R_s-\overline{\mathsf R}_s\|_{2\to2}\le n^{-c}.
\end{equation}
The operators in this comparison and in (B) are Markov
contractions for their respective measures, and the measures
are pointwise equivalent up to $1+O(n^{-c})$.
Their differences have bounded $L^\infty$ norm.
Interpolation thus gives, for each fixed $q>2$,
\begin{equation}\label{eq:shadow-product-perturb}
 \|\mathsf R_s\mathsf D\mathsf U-\overline{\mathsf R}_s\,\overline{\mathsf D\mathsf U}\|_{q\to q}
 \le\varepsilon,\qquad \varepsilon=n^{-c_{F,\eta,q}},
\end{equation}
where the norms may be taken under $\pi_s$ or uniform measure.
By (C), \eqref{eq:polynomial-perturb} with
$P(u)=(1+2u)^s$ gives, for every fixed $C$ and all large $n$,
\begin{equation}\label{eq:perturbation-semigroup-bound}
 \|e^{-u\mathsf R_s\mathsf D\mathsf U}\|_{q\to q}
 \le2(1+2u)^s\qquad(0\le u\le C\log D_\star).
\end{equation}
Apply \eqref{eq:ABBA-identity} with
$A=\mathsf U$ and $B=\mathsf R_s\mathsf D$.
By \eqref{eq:resource-factorisation}, contraction of $A,B$,
and the preceding bound,
\begin{equation}\label{eq:ab-ba-reduction}
 \|e^{-u\mathsf T_0}\|_{q\to q}
 \le1+\int_0^u\|e^{-v\mathsf R_s\mathsf D\mathsf U}\|_{q\to q}\,dv
 \le C_F(1+u)^{s+1}.
\end{equation}
Finally, (A), interpolation and \eqref{eq:polynomial-perturb}
transfer this bound to $e^{-u\mathsf T}$ throughout
$u=O(\log D_\star)$.  Taking $u=(r-1)t$ proves
\eqref{eq:resource-semigroup-goal}.
\end{proof}

\subsection{Resource approximation: reduction to fibre mixing}

We prove (A) in this subsection, assuming a lemma proved in a later subsection.
For distinct resources $e,f$, let $c(e,f)$ be the number of copies
of $F$ containing both.  Fix $j$ with $\lambda_j>0$ and a
$j$-subset $J$ of $e$.  The total weight of transitions whose
intersection is $J$ is
\[
 s_J(e)=\sum_{\substack{f\in E(H_\star)\\ e\cap f=J}}c(e,f).
\]

\begin{lemma}[Local role frequencies]\label{lem:role-counts}
Uniformly over $e$, $j$ with $\lambda_j>0$, and $J\in\binom ej$,
\begin{equation}\label{eq:role-counting}
 s_J(e)=(1+O(n^{-c}))
 \frac{(r-1)\lambda_j}{\binom kj}\,d_e.
\end{equation}
\end{lemma}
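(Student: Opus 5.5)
We prove the lemma by decomposing $s_J(e)$ according to the roles played by $e$ and $f$ in a copy of $F$. This reduces it to finitely many rooted extension counts, each of which is estimated by the Joos--K\"uhn handoff estimates (Proposition~\ref{prop:JK-handoff}).

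\textbf{Role decomposition.} As in the proof of Corollary~\ref{cor:near-regular-handoff}, we write
\[
 s_J(e)=\frac1{\operatorname{aut}(F)}\sum_{(f_0,g_0)}\ \sum_{\sigma:f_0\overset\sim\longrightarrow e}
 \Phi^{J}_{F,\sigma,g_0}.
\]
Here $(f_0,g_0)$ ranges over ordered pairs of distinct role edges of $F$ with $|f_0\cap g_0|=j$. The count $\Phi^{J}_{F,\sigma,g_0}$ is the number of extensions of $\sigma$ to a copy of $F$ in $H_\star$ whose image of $f_0\cap g_0$ is exactly $J$; that is, we require $\sigma(f_0\cap g_0)=J$. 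Each pair of a copy containing $e$ and a choice of a second edge $f$ of that copy with $e\cap f=J$ is counted exactly $\operatorname{aut}(F)$ times, which justifies the normalisation. The condition $\sigma(f_0\cap g_0)=J$ selects, for fixed $(f_0,g_0)$, exactly $j!(k-j)!$ of the $k!$ bijections $\sigma$. Every remaining term is a rooted count $\Phi_{F,\sigma}$ of the one-edge type, and Proposition~\ref{prop:JK-handoff}(i) gives each of them as $(1+n^{-\Omega(1)})n^ap_\star^b$. Summing, we obtain
\[
 s_J(e)=(1+O(n^{-c}))\frac{r(r-1)\lambda_j\, j!(k-j)!}{\operatorname{aut}(F)}n^ap_\star^b,
\]
because the number of ordered pairs $(f_0,g_0)$ with $|f_0\cap g_0|=j$ is $r(r-1)\lambda_j$.

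\textbf{Comparison with $d_e$.} By \eqref{eq:resource-degrees} and \eqref{eq:candidate-reference-degree}, $d_e=(1+n^{-\Omega(1)})rk!\,n^ap_\star^b/\operatorname{aut}(F)$. Dividing the two expressions gives the ratio
\[
 \frac{s_J(e)}{d_e}=(1+O(n^{-c}))\,\frac{(r-1)\lambda_j\,j!(k-j)!}{k!}=(1+O(n^{-c}))\frac{(r-1)\lambda_j}{\binom kj},
\]
which is \eqref{eq:role-counting}. The error is uniform in $e,j,J$ because the handoff estimates are uniform over injections $\psi$, and there are only $O_F(1)$ role choices.

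\textbf{Expected obstacle.} Two points need care. First, we must check that the exactness condition $e\cap f=J$ is automatic: a copy of $F$ is an injective image, so the intersection of the images of $f_0$ and $g_0$ is precisely the image of $f_0\cap g_0$, and no inclusion--exclusion is required. Second, we must confirm that the counts used are exactly those covered by Proposition~\ref{prop:JK-handoff}(i), namely rooted at a full edge $e$ with the remaining $r-1$ edges tested. This is the strictly balanced rooted template $(F,f_0)$, so no additional chain counts are needed. If the proposition were only stated for counts summed over $\sigma$, we would instead apply it to the template rooted at $f_0$ with the ordering of $J$ inside $e$ prescribed. The pointwise-in-$\sigma$ form gives this directly, and this identification with the stated handoff inputs is the step I would verify most carefully.
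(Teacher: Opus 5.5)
Your proof is correct and follows the paper's argument essentially verbatim. Like the paper, you decompose $s_J(e)$ over ordered role pairs $(f_0,g_0)$ with $|f_0\cap g_0|=j$ and over the $j!(k-j)!$ bijections $\sigma$ with $\sigma(f_0\cap g_0)=J$, estimate each $\Phi_{F,\sigma}$ by Proposition~\ref{prop:JK-handoff}(i), and divide by the degree estimate \eqref{eq:resource-degrees}; your remark that injectivity makes $e\cap f=J$ automatic is the point left implicit in the paper's displayed identity.
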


\begin{proof}
Count ordered pairs $(A,f')$ with $e,f'\in A$, $f'\ne e$, and
$e\cap f'=J$.  For each ordered role pair $(f,g)$ with
$|f\cap g|=j$, precisely $j!(k-j)!=k!/\binom kj$
bijections $\sigma:f\to e$ map $f\cap g$ onto $J$.  Hence
\[
 s_J(e)=\frac1{\operatorname{aut}(F)}
 \sum_{\substack{(f,g)\in E(F)^2\\f\ne g,\ |f\cap g|=j}}
 \sum_{\substack{\sigma:f\overset\sim\longrightarrow e\\
                 \sigma(f\cap g)=J}}\Phi_{F,\sigma}.
\]
By the rooted copy count \eqref{eq:rooted-F-count}, all the
summands have the same estimate.  There are $r(r-1)\lambda_j$
role pairs.  Dividing by the resource degree estimate
\eqref{eq:resource-degrees} proves the lemma.
\end{proof}

For $J$ with $\pi_j(J)>0$, set
$\mathcal R_J=\{e\in E(H_\star):J\subset e\}$.  On this fibre put
\[
 K_J(e,f)=
 \begin{cases}c(e,f)/s_J(e),&e\cap f=J,\\0,&\text{otherwise},\end{cases}
 \qquad
 \sigma_J(e)=\frac{s_J(e)}{\sum_{g\supset J}s_J(g)},
 \qquad
 \tau_J(e)=\frac{\pi_k(e)}{\sum_{g\supset J}\pi_k(g)}.
\]
Here $K_J$ is Markov and reversible with stationary law
$\sigma_J$, by $c(e,f)=c(f,e)$.  From \eqref{eq:role-counting}
we have $\sigma_J=(1+O(n^{-c}))\tau_J$ pointwise.
The only nonlocal assertion required is the following.

\begin{lemma}[Resource-fibre mixing]\label{lem:fibre-mixing}
Uniformly over $j$ with $\lambda_j>0$ and $j$-sets $J$,
\[
 \|K_J-\Pi_J\|_{L^2(\sigma_J)\to L^2(\sigma_J)}\le n^{-c},
 \qquad \Pi_Jh=\mathbb E_{\sigma_J}h.
\]
\end{lemma}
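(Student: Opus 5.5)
We follow the link-walk strategy of Subsection~\ref{subsec:mixing}: we bound a high power of $K_J$ in total variation using chain counts from the handoff, and then pass to the $L^2$ operator norm by reversibility. Throughout, $\ell=\ell(F,\eta)$ is a fixed even integer. The target is the uniform estimate
\[
 \sup_{e\in\mathcal R_J}\sum_{f\in\mathcal R_J}\bigl|K_J^\ell(e,f)-\sigma_J(f)\bigr|\le n^{-c}.
\]
Granting this, $\|K_J^\ell-\Pi_J\|_{\infty\to\infty}\le n^{-c}$. Since $K_J^\ell-\Pi_J$ is self-adjoint in $L^2(\sigma_J)$, duality gives the same bound on $L^1$, and Riesz--Thorin interpolation gives it on $L^2$. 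The spectrum of $K_J-\Pi_J$ lies in $[-1,1]$, and the spectral theorem gives
\[
 \|K_J-\Pi_J\|_{2\to2}^\ell=\|(K_J-\Pi_J)^\ell\|_{2\to2}=\|K_J^\ell-\Pi_J\|_{2\to2}.
\]
Hence $\|K_J-\Pi_J\|_{2\to2}\le n^{-c/\ell}$, which is the lemma with a smaller exponent.

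To get the total-variation bound, we expand $K_J^\ell(e,f)\,s_J(e)$ as a weighted count of \emph{$J$-chains}. These are sequences $e=e_0,A_1,e_1,\dots,A_\ell,e_\ell=f$ with $e_{i-1},e_i\in A_i$ and $e_{i-1}\cap e_i=J$. The intermediate normalisations $1/s_J(e_i)$ are, by Lemma~\ref{lem:role-counts} and \eqref{eq:resource-degrees}, all equal to $(1+O(n^{-c}))$ times a common constant, so up to this factor $K_J^\ell(e,f)$ is proportional to the number $\Phi_\ell(e,f)$ of such chains. Each chain is an injective image of one of finitely many rooted templates $(A,I)$ with $I=e\cup f$. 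A template records the roles of consecutive edges and the pattern of vertex coincidences among the $A_i$ beyond $J$.

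The generic templates have all vertices outside $e\cup f$ distinct. For these, $\Phi_{A,\psi}=(1+O(n^{-c}))\widehat\phi_{A,I}$ by the Joos--K\"uhn rooted counts of Section~\ref{sec:handoff-inputs}, uniformly in $\psi$. This applies provided $(A,I)$ is strictly balanced, and that holds for $\ell$ large: internal subsets of a long chain inherit the densities of strictly $k$-balanced $F$, and the endpoint-rooted total density tends to $\rho$. The generic count depends on $f$ only through $f\cap e$. Combined with $\sigma_J=(1+O(n^{-c}))\tau_J$, the near-uniform shadow law \eqref{eq:uniform-shadow-law}, and near-regularity, it matches $\sigma_J(f)$ up to $1+O(n^{-c})$ for all $f$ with $f\cap e=J$. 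The remaining $f$, which meet $e$ in more than $J$, have $\sigma_J$-mass $O(n^{-c})$ and also carry total $K_J^\ell$-mass $O(n^{-c})$ from $e$. Summing over $f$ then gives the total-variation bound.

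The main obstacle is controlling the degenerate templates, where extra vertex coincidences occur, in $\ell^1$ over $f$ uniformly in the root $e$. These templates need not be balanced, so we cannot use their counts pointwise. Instead we would sum them over $f$ and count them as templates rooted only at $e$, using upper-bound extension counts for arbitrary rooted templates from the handoff inputs. Strict $k$-balance of $F$ should ensure that each forced coincidence costs a factor $n^{-c}$ relative to the generic total $s_J(e)^{\ell}$, since $p_\star=n^{-1/\rho+\eta}$ with $\eta$ small. The delicate part is the bookkeeping: we must check that every coincidence pattern strictly lowers the rooted density by a margin bounded below independently of $n$, uniformly over the finitely many patterns for fixed $\ell$. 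We would first try to do this by an induction along the chain, tracking the first coincidence.
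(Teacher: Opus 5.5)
Your architecture is the paper's: a total-variation bound for a fixed power $K_J^{\ell}$ from endpoint-rooted chain counts, followed by self-adjointness and root extraction. Your explicit spectral step $\|K_J-\Pi_J\|_{2\to2}=\|K_J^\ell-\Pi_J\|_{2\to2}^{1/\ell}$ is correct; the paper leaves it implicit.

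\textbf{The gap.} The step you flag as the main obstacle is the bound of $O(n^{-c})$ on the row mass of nongeneric trajectories, and you do not prove it. The route you propose first would not go through with the available inputs. The handoff estimates give upper bounds only for strictly balanced templates (Proposition~\ref{prop:JK-handoff}(iii)), together with enlarged-root bounds (iv). They give nothing for arbitrary unbalanced multi-copy coincidence patterns. Checking a uniform density margin for every such pattern is exactly the bookkeeping you could not complete.

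\textbf{The missing idea.} Bound this mass sequentially, in the walk measure, so that only a single copy of $F$ is ever counted. Your fallback of tracking the first coincidence is the right one, and it closes as follows.
\begin{itemize}
\item A nongeneric trajectory has a first step $i$ at which the new copy $A_i\ni e_{i-1}$ contains a previously exposed vertex $x\notin e_{i-1}$. There are $O(\ell)$ choices of $x$.
\item Conditional on the history, the step picks a uniform pair (copy through $e_{i-1}$, outgoing edge) among $s_J(e_{i-1})=\Theta(n^ap_\star^b)$ pairs, by Lemma~\ref{lem:role-counts}.
\item The number of copies through $e_{i-1}$ that contain $x$ is a sum of $O_F(1)$ terms. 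Each is bounded by \eqref{eq:JK-enlarged-root-count} applied to the one-copy template $(F,f)$, with the root enlarged by one role vertex.
\item Every subtemplate $B$ containing that enlarged root has $\widehat\phi_{B,f}(i^\star)\ge n^{c}$. If $B$ has no tested edge, this is at least $n$. If $B\ne F$ has a tested edge, strict $k$-balance gives a fixed positive exponent for small $\eta$. If $B=F$, it equals $n^{b\eta}$.
\item Hence each first reuse has conditional probability at most $n^{-c}$. Summing over $O(\ell)$ vertices and $\ell$ steps gives nongeneric row mass $O(n^{-c})$.
\item This also covers endpoints with $e\cap f\supsetneq J$, since fresh chains necessarily end in $G_e$.
\end{itemize}

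\textbf{Smaller points.}
\begin{itemize}
\item Strict balance of the generic chains does not by itself give relative error $n^{-c}$. You also need the endpoint-rooted trajectory $n^{-c_j/\rho+\eta(\ell b-1)}$ to be polynomially large. This forces $\ell\gtrsim\eta^{-1}$, which your $\ell=\ell(F,\eta)$ allows but must be imposed, together with chain order at most $\eta^{-4}$.
\item Your heuristic that subsets ``inherit the densities'' of $F$ needs the paper's two-case argument. A set $U$ meeting some intermediate $e_h$ only in $J$ gains the excess $c_j$ from the reversed suffix. Otherwise each incomplete block pays a deficit proportional to its missing vertices.
\item You need not match the generic constant to $\sigma_J(f)$ by explicit computation. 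The generic entries share one value on $(1+O(n^{-c}))m_J$ endpoints and the exceptional mass is $O(n^{-c})$, so the row sums force that value to be $(1+O(n^{-c}))/m_J$.
\end{itemize}
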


We deduce (A) from this lemma, proved in
Subsection~\ref{subsec:F-chain-counts}.

\begin{proof}[Proof of (A), assuming Lemma~\ref{lem:fibre-mixing}]
Write $\mathsf T_j$ for the part of $\mathsf T$ moving between
resources with intersection size $j$.  Exactly,
\begin{equation}\label{eq:exact-intersection-decomposition}
 (\mathsf T_jh)(e)=\sum_{J\in\binom ej}
   \frac{s_J(e)}{(r-1)d_e}(K_Jh)(e).
\end{equation}
Put $\mathsf K_jh(e)=\binom kj^{-1}\sum_{J\in\binom ej}K_Jh(e)$.
Lemma~\ref{lem:role-counts}, Jensen's inequality and the
comparison $\sigma_J=(1+O(n^{-c}))\tau_J$ give
$\|\mathsf T_j-\lambda_j\mathsf K_j\|_{2\to2}\le n^{-c}$.
Indeed, after sampling $e\sim\pi_k$ and then a uniform
$J\subset e$ of size $j$, the conditional law of $e$ is $\tau_J$;
this converts the averaged fibre $L^2$ norms into the global norm.

In the same sampling, $\mathsf U_j^{(k)}\mathsf D_j^{(k)}h(e)$
is the average over $J\in\binom ej$ of $\mathbb E_{\tau_J}h$.
Lemma~\ref{lem:fibre-mixing}, the equivalence of $\sigma_J$
and $\tau_J$, and Jensen therefore give
\[
\begin{aligned}
 &\|\mathsf K_jh-\mathsf U_j^{(k)}\mathsf D_j^{(k)}h\|_{L^2(\pi_k)}^2\\
 &\quad\le\sum_J\pi_j(J)
   \|K_Jh-\mathbb E_{\tau_J}h\|_{L^2(\tau_J)}^2
 \le n^{-2c}\sum_J\pi_j(J)\|h\|_{L^2(\tau_J)}^2
 =n^{-2c}\|h\|_{L^2(\pi_k)}^2.
\end{aligned}
\]
Summing over the finitely many occurring values of $j$ proves
\eqref{eq:resource-walk-approx}.
\end{proof}

\subsection{Shadow comparison: reduction to fibre mixing}

We prove (B) in this subsection, assuming a lemma proved in a later subsection.
Recall the maximum intersection size $s$ defined above.
The shadow walk $\mathsf D\mathsf U$ chooses a surviving $k$-edge
containing the current $s$-set, with weight proportional to
its resource degree, and then a uniform $s$-subset of that edge.
Call $0\le\ell<s$ \emph{feasible} if $2s-\ell\le k$.
Fix a feasible $\ell$ and an $\ell$-set $J$.
From an $s$-set $S\supset J$ the event that the next $s$-set $T$
satisfies $S\cap T=J$ has probability
$\binom{k-s}{s-\ell}/\binom ks$, independent of the selected resource.
Conditional on this event, the transition kernel is
\begin{equation}\label{eq:lower-shadow-kernel}
 \mathsf L_J^{(\ell)}(S,T)=
 \frac{\mathbf1_{\{S\cap T=J\}}}{\binom{k-s}{s-\ell}}
 \frac{\sum_{e\supset S\cup T}\pi_k(e)}
      {\sum_{e\supset S}\pi_k(e)}.
\end{equation}
It is Markov and reversible under
$\nu_J:=\pi_s(\cdot\mid J\subset S)$: multiplying
\eqref{eq:lower-shadow-kernel} by $\pi_s(S)$ produces an
expression symmetric in $S,T$.  Write
$\Pi_J^{(s)}h=\mathbb E_{\nu_J}h$.

\begin{lemma}[Shadow-fibre mixing]\label{lem:shadow-fibre}
Uniformly over feasible $\ell<s$ and $\ell$-sets $J$,
\[
 \|\mathsf L_J^{(\ell)}-\Pi_J^{(s)}\|_{L^2(\nu_J)\to L^2(\nu_J)}
 \le n^{-c}.
\]
\end{lemma}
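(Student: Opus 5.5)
We mirror the link-walk argument of Subsection~\ref{subsec:mixing}: first prove a uniform $L^1$ mixing estimate for a bounded power of $\mathsf L_J^{(\ell)}$ from chain counts at the handoff, then pass to the $L^2$ operator norm. Fix feasible $\ell<s$ and $J$. Reversibility of $\mathsf L_J^{(\ell)}$ under $\nu_J$ makes $\mathsf L_J^{(\ell)}-\Pi_J^{(s)}$ self-adjoint in $L^2(\nu_J)$. Also $(\mathsf L_J^{(\ell)}-\Pi_J^{(s)})^m=(\mathsf L_J^{(\ell)})^m-\Pi_J^{(s)}$. Hence it suffices to find a fixed $m$ with
\[
 \sup_S\sum_T\bigl|(\mathsf L_J^{(\ell)})^m(S,T)-\nu_J(T)\bigr|\le n^{-c}.
\]
This is an $L^\infty\to L^\infty$ bound. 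By duality and reversibility the same bound holds $L^1\to L^1$, and Riesz--Thorin then gives the $2\to2$ bound. Taking $m$-th roots of the spectral radius yields $\|\mathsf L_J^{(\ell)}-\Pi_J^{(s)}\|_{2\to2}\le n^{-c/m}$.

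For the pointwise estimate we expand $(\mathsf L_J^{(\ell)})^m(S_0,S_m)$ as a sum over sequences $S_0,S_1,\dots,S_m$ of $s$-sets containing $J$ with $S_{i-1}\cap S_i=J$. Each step carries the weight $\sum_{e\supset S_{i-1}\cup S_i}\pi_k(e)$ divided by $\sum_{e\supset S_{i-1}}\pi_k(e)$. Write $\pi_k(e)=d_e/(rM)$ with $d_e=(1+n^{-c})D_\star$ from Corollary~\ref{cor:near-regular-handoff}. Then every weight is, up to a factor $1+O(n^{-c})$, a ratio of counts of surviving $k$-edges. The denominators are one-edge rooted counts $\Phi$ with root $S_{i-1}$. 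By \eqref{eq:uniform-shadow-law} and its conditional form, they equal $(1+O(n^{-c}))n^{k-s}\widehat p$. The numerator chain, summed over $S_1,\dots,S_{m-1}$, is a rooted template count $\Phi_{A,\psi}$. Here $A$ consists of $m$ $k$-edges $e_1,\dots,e_m$, where $e_i$ contains $S_{i-1}\cup S_i$, and the root is $I=S_0\cup S_m$; the edges are distinct, with all non-forced vertices distinct. The Joos--K\"uhn estimates of Section~\ref{sec:handoff-inputs} give $\Phi_{A,\psi}=(1+O(n^{-c}))\widehat\phi_{A,I}$ whenever $(A,I)$ is suitably (strictly) balanced and $\widehat\phi$ grows polynomially. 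The main terms then cancel against $\nu_J(S_m)$, which by \eqref{eq:uniform-shadow-law} is $(1+O(n^{-c}))/\binom{n-\ell}{s-\ell}$.

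The main obstacle is choosing $m$ and verifying the balance condition. For small $m$ the chain rooted at both ends is too thin: the expected count of $A$ rooted at $S_0\cup S_m$ may be $o(1)$, or some intermediate subchain may be denser than the whole. I would take $m$ large, depending on $k,s,\eta$. Each added link contributes $k-2s+\ell+(s-\ell)=k-s$ new vertices and one edge, so $d_{A,I}$ of the full chain is about $1/(k-s)$ per vertex. The condition $\rho(k-s)>1$ from \eqref{eq:shadow-density-gap}, together with $p_\star=n^{-1/\rho+\eta}$, gives expected extension counts that grow like $n^{\Omega(m)}$ relative to the root. Sub-templates $U$ are unions of consecutive segments, plus possibly extra vertices of a single edge. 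For these, each edge adds at least $k-s\ge1$ new vertices unless it is at a root end, so their densities are at most that of the whole with equality only in degenerate cases. I would check this explicitly, segment by segment, to get the balance needed for concentration.

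Degenerate configurations, where vertices coincide beyond the forced intersections or edges repeat, form templates with fewer vertices. Their contribution is a $n^{-c}$ fraction, by first-moment upper bounds from the same handoff estimates. Summing the resulting pointwise relative errors over $S_m$ gives the required $L^1$ bound, uniformly in $S_0$ and $J$.
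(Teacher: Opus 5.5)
Your proposal is correct and follows essentially the paper's route: a fixed-length shadow chain rooted at $S_0\cup S_L$ is strictly balanced and has polynomially large trajectory because $\rho(k-s)>1$. It is counted with the Joos--K\"uhn balanced estimates, giving a total-variation bound for $(\mathsf L_J^{(\ell)})^L$ that transfers to $L^2$ by self-adjointness and an $L$-th root. The differences are minor: the paper fixes the common generic weight by a row-sum argument rather than explicit cancellation, and it makes your ``first-moment'' treatment of degenerate trajectories precise by applying the enlarged-root bound \eqref{eq:JK-enlarged-root-count} at the first step whose chosen edge reuses an earlier vertex, which avoids needing any balance property for degenerate templates.
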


\begin{proof}[Proof of (B), assuming Lemma~\ref{lem:shadow-fibre}]
Put
$\beta_\ell=\binom s\ell\binom{k-s}{s-\ell}/\binom ks$.
A step of $\mathsf D\mathsf U$ chooses $\ell$ with probability
$\beta_\ell$, then a uniformly chosen $\ell$-subset $J$ of $S$,
then follows $\mathsf L_J^{(\ell)}$.  When $\ell=s$ the step is
the identity, with mass $1/\binom ks$ in both walks.
For $\ell<s$, replace $\mathsf L_J^{(\ell)}$ by
$\Pi_J^{(s)}$.  Sampling $S\sim\pi_s$ and then $J$
gives conditional law $S\sim\nu_J$.  Jensen's inequality
and Lemma~\ref{lem:shadow-fibre} bound the change in $L^2$
by $n^{-c}$.

For the complete $k$-graph, the analogous exact-$J$ kernel is
uniform on $T$ with $S\cap T=J$, since the number of
containing $k$-edges depends only on $|S\cup T|=2s-\ell$.
This law differs in row total variation by $O_k(n^{-1})$
from uniform on all $s$-sets containing $J$.  Both kernels are
reversible for that uniform law, so duality and interpolation
also give an $O_k(n^{-1})$ bound in $L^2$.
By \eqref{eq:uniform-shadow-law}, $\nu_J$ is pointwise
$1+O(n^{-c})$ times this uniform law.  Applying the same
conditional Jensen bound compares the replaced transitions
with $\overline{\mathsf D\mathsf U}$ in $L^2(\pi_s)$ by $n^{-c}$.
The triangle inequality proves \eqref{eq:shadow-compare-goal}.
\end{proof}

\subsection{The elementary stationary bound}

\begin{proof}[Proof of (C)]
Call a matrix on $\Omega_s$ \emph{intersection-invariant} if its
$(S,T)$ entry depends only on $|S\cap T|$.  Such matrices form a
vector space of dimension at most $s+1$.  Their products are
intersection-invariant, since permuting $[n]$ merely relabels
the summation variable.  They are also symmetric; hence their
products are symmetric, so any two such matrices commute.

Both $\overline{\mathsf R}_s$ and
$\overline{\mathsf D\mathsf U}$ are intersection-invariant,
Markov and positive semidefinite on the uniform slice: each is
a conditional averaging map followed by its adjoint, or a
convex combination of such maps.  Thus they commute, and
$H:=\overline{\mathsf R}_s\,\overline{\mathsf D\mathsf U}$
is symmetric, positive semidefinite and Markov.  Its eigenvalues
therefore lie in $[0,1]$.

Every power of $H$ is intersection-invariant, so $H$ has at most
$s+1$ distinct eigenvalues $\theta_0,\ldots,\theta_{d-1}$.
Newton interpolation of $f(x)=e^{-ux}$ at these nodes gives
\[
 e^{-uH}
 =\sum_{h=0}^{d-1}f[\theta_0,\ldots,\theta_h]
   \prod_{i=0}^{h-1}(H-\theta_i I).
\]
The divided-difference mean-value theorem gives
$|f[\theta_0,\ldots,\theta_h]|\le u^h/h!$ on $[0,1]$.
Since $H$ is Markov, $\|H-\theta_i I\|_{q\to q}\le2$.
Taking norms yields
\[
 \|e^{-uH}\|_{q\to q}
 \le\sum_{h=0}^{d-1}\frac{(2u)^h}{h!}
 \le\sum_{h=0}^s\frac{(2u)^h}{h!}
 \le(1+2u)^s,
\]
which is (C).
\end{proof}

\subsection{Counting chains of copies of \texorpdfstring{$F$}{F}}
\label{subsec:F-chain-counts}

We now prove Lemma~\ref{lem:fibre-mixing}.  A step of $K_J$
uses an ordered pair of role edges meeting in the prescribed $j$-set
$J$.  To study a fixed power of $K_J$, we join several such pairs
using fresh copies of $F$.
An \emph{$F$-chain} of length $L$ consists of copies
$F_1,\ldots,F_L$ and $k$-edges $e_0,\ldots,e_L$ with
$e_{i-1},e_i\in E(F_i)$ and
\begin{equation}\label{eq:F-chain-fresh}
 V(F_i)\cap\left(e_0\cup\bigcup_{h<i}V(F_h)\right)=e_{i-1}
 \qquad(1\le i\le L).
\end{equation}
Put $W_i=V(F_i)\setminus e_{i-1}$.  Thus the $W_i$ are disjoint,
each of size $a=m-k$, and each copy adds exactly $b=r-1$
new edges.  An $F$-chain \emph{retains $J$} if
$e_{i-1}\cap e_i=J$ for every $i$.  Its endpoint root is
$I=e_0\cup e_L$.  For such a chain with $L\ge2$,
\begin{equation}\label{eq:excess-parameter}
 c_j=\rho(k-j)-1>0,
\end{equation}
and, since the only edges of the chain contained in $I$ are
$e_0$ and $e_L$, its number of free vertices and tested edges is
\begin{equation}\label{eq:chain-parameters}
 X_L=|V(C_L)\setminus I|=La-(k-j),\qquad
 Q_L=e(C_L)-e(C_L[I])=Lb-1=\rho X_L+c_j.
\end{equation}

We first verify the balance needed to apply the handoff counts.

\begin{lemma}[One-ended chain balance]\label{lem:one-ended-balance}
For a role edge $f\in E(F)$ and $f\subseteq U\subseteq V(F)$,
\begin{equation}\label{eq:original-template-balance}
 e(F[U])-1\le\rho(|U|-k).
\end{equation}
The inequality is strict when $U\ne V(F)$ and $F[U]$
contains another edge.  Consequently, for any $F$-chain $C$
rooted at $e_0$ and $U\supseteq e_0$,
\begin{equation}\label{eq:persistent-chain-balance}
 e(C[U])-1\le\rho(|U|-k).
\end{equation}
The latter inequality also holds for the reversed chain.
\end{lemma}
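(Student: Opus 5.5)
The first inequality \eqref{eq:original-template-balance} follows directly from strict $k$-balance, and the chain inequality \eqref{eq:persistent-chain-balance} follows by induction on the length $L$, adding one copy at a time. The reversed-chain statement reduces to the forward one once we check that reversing an $F$-chain gives another $F$-chain.

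\emph{The single-copy inequality.} Fix $f\in E(F)$ and $f\subseteq U\subseteq V(F)$.
\begin{itemize}
\item If $f$ is the only edge of $F[U]$, the left side of \eqref{eq:original-template-balance} is $0\le\rho(|U|-k)$, since $|U|\ge k$.
\item If $F[U]$ contains another edge, then $|U|>k$, because two distinct $k$-edges span more than $k$ vertices.
\begin{itemize}
\item If moreover $U\ne V(F)$, then $F[U]$ is a proper subhypergraph with at least two edges. Strict $k$-balance gives $(e(F[U])-1)/(|U|-k)<\rho$, which is the strict form of \eqref{eq:original-template-balance}.
\item If $U=V(F)$, then $e(F[U])=r$ (strict balance excludes isolated vertices), and we have equality by the definition of $\rho$.
\end{itemize}
\end{itemize}

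\emph{The chain inequality.} Let $C_i$ be the chain truncated after $F_i$, so $C_0$ is the single edge $e_0$. Fix $U\supseteq e_0$ and set $U_i=U\cap V(F_i)$. The base case is $e(C_0[U])-1=0=\rho(|e_0|-k)$, since $|e_0|=k$.

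By the freshness condition \eqref{eq:F-chain-fresh}, every edge of $F_i$ other than $e_{i-1}$ contains a vertex of $W_i$. Hence it is new, and
\[
 e(C_i[U])-e(C_{i-1}[U])=e(F_i[U_i])-\mathbf 1\{e_{i-1}\subseteq U\},
 \qquad |U\cap W_i|=|U_i\setminus e_{i-1}|.
\]
So it suffices to show that this edge increment is at most $\rho|U\cap W_i|$.
\begin{itemize}
\item If $e_{i-1}\subseteq U$, apply \eqref{eq:original-template-balance} to $F_i$ with role edge $e_{i-1}$ and the set $U_i$. This gives increment $\le\rho(|U_i|-k)=\rho|U\cap W_i|$.
\item If $e_{i-1}\not\subseteq U$, apply \eqref{eq:original-template-balance} to $U'=U_i\cup e_{i-1}$. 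Since $F_i[U']$ contains $e_{i-1}$ in addition to all edges of $F_i[U_i]$, we get
\[
 e(F_i[U_i])\le e(F_i[U'])-1\le\rho(|U'|-k)=\rho|U\cap W_i|.
\]
\end{itemize}
Summing the increments over $i$ proves \eqref{eq:persistent-chain-balance}.

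\emph{The reversed chain.} It remains to show that $F_L,\dots,F_1$ with edges $e_L,\dots,e_0$ is again an $F$-chain. That is, we need
\[
 V(F_i)\cap\Bigl(e_L\cup\bigcup_{h>i}V(F_h)\Bigr)=e_i .
\]
The inclusion $\supseteq$ holds because $e_i\subseteq V(F_{i+1})$ (or $e_i=e_L$ when $i=L$).

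For $\subseteq$, we prove by induction on $h>i$ that $V(F_h)\cap V(F_i)\subseteq e_i$. For $h=i+1$ this is immediate from freshness. For $h>i+1$, freshness gives $V(F_h)\cap V(F_i)\subseteq e_{h-1}\subseteq V(F_{h-1})$. Hence
\[
 V(F_h)\cap V(F_i)\subseteq V(F_{h-1})\cap V(F_i)\subseteq e_i
\]
by the induction hypothesis. The case of $e_L$ is covered in the same way, since $e_L\subseteq V(F_L)$.

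The forward argument then applies verbatim to the reversed chain, rooted at $e_L$. The only point requiring care is the case split on whether $e_{i-1}\subseteq U$; the argument is otherwise elementary.
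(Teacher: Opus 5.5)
Your proof is correct and follows essentially the paper's argument. The single-copy bound comes from strict $k$-balance. The chain bound is then a block-by-block count: adjoin the incoming edge $e_{i-1}$, apply the single-copy bound to get an increment at most $\rho|U\cap W_i|$, and sum over the disjoint $W_i$. Your case split on whether $e_{i-1}\subseteq U$ is just the paper's uniform step $q_i\le\rho s_i$ written out. Your explicit verification that reversing an $F$-chain preserves the freshness condition fills in what the paper leaves as ``the reverse argument is identical''.
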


\begin{proof}
If $F[U]$ contains only $f$ the first inequality is immediate;
otherwise it follows from strict $k$-balance for a proper $U$,
and is an equality for $U=V(F)$.  In a chain, put
$s_i=|U\cap W_i|$ and let $q_i$ be the number of edges
of $F_i$ other than $e_{i-1}$ retained in $C[U]$.
Adjoining the entire incoming edge $e_{i-1}$ can only increase
this number.  The first inequality applied to that block gives
$q_i\le\rho s_i$.  Sum over blocks, noting that their newly
introduced edges are distinct.  The reverse argument is identical.
\end{proof}

\begin{lemma}[Endpoint-rooted chain counts]
\label{lem:persistent-chain-count}
For each $j$ with $\lambda_j>0$, sufficiently long $F$-chains
retaining $J$ are strictly balanced with root $I=e_0\cup e_L$,
uniformly over their ordered role sequences.  For sufficiently
small fixed $\eta$ there is a fixed length
$L_j=O_F(\eta^{-1})$ such that all these chains have at most
$\eta^{-4}$ vertices and endpoint-rooted trajectory at least $n$.
Consequently their extension counts, for all prescribed endpoints,
have relative error $n^{-c}$ on $\mathcal G_\star$.
\end{lemma}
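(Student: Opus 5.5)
We have to establish three things for the chains $C_L$ retaining $J$ with root $I=e_0\cup e_L$: strict balance for long enough $L$, a choice of fixed length $L_j$ making the expected count grow at least like $n$, and then an appeal to the Joos--K\"uhn handoff counts of Section~\ref{sec:handoff-inputs}, which give relative error $n^{-c}$ for strictly balanced rooted templates of bounded size with trajectory at least $n$ (we assume these counts are stated in roughly that form).

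\emph{Balance.} Let $I\subsetneq U\subsetneq V(C_L)$ and write $U_0=U\setminus I$, with $s_i=|U\cap W_i|$. The plan is to apply Lemma~\ref{lem:one-ended-balance} twice. First apply it forwards to the blocks $F_1,\dots,F_{i^*}$ and backwards, from $e_L$, to $F_L,\dots,F_{i^*+1}$, for a split point $i^*$ chosen so that each side is a one-ended chain. Each side gives $e(\cdot)-1\le\rho(\text{new vertices})$. Adding the two estimates gives
\[
 e(C_L[U])-2\le\rho|U_0|+\rho(\text{overlap correction}),
\]
where the correction accounts for the vertices of $e_0$ and $e_L$ outside $J$. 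Since $|e_0\cup e_L|=2k-j$, the count works out to
\[
 e(C_L[U])-e(C_L[I])\le\rho|U_0|+c_j.
\]
So the rooted density of $U$ is at most $\rho+c_j/|U_0|$, while the full density is $\rho+c_j/X_L$ by \eqref{eq:chain-parameters}. For $U$ with $|U_0|<X_L$ this is the wrong direction, and this is the main obstacle. The fix is to use strictness. If $U$ misses some vertex of some $W_i$, or $F_i[U]$ loses an edge, then the block inequality in Lemma~\ref{lem:one-ended-balance} is strict unless that block contributes only its incoming edge, i.e.\ $s_i=0$ with $q_i=0$. A block entirely absent from $U$ breaks the chain. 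The two pieces then must each be rooted at only one of $e_0,e_L$, and the term $c_j$ is lost: a single edge endpoint gives $e-1\le\rho s$ with no $c_j$, and since $c_j>0$ this yields a saving of order $c_j$. For a partially present block, strictness of the strict-balance inequality gives a saving of at least a constant $\epsilon_F>0$ depending only on $F$. Hence every proper $U$ has rooted excess at most $\rho|U_0|+c_j-\epsilon$, with $\epsilon=\min(\epsilon_F,c_j)$. Comparing with $\rho+c_j/X_L$ shows strict balance as soon as $X_L\,\epsilon>c_j$ roughly, which holds for all $L\ge L_0(F)$, uniformly over the finitely many role sequences.

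\emph{Trajectory and length.} The expected extension count is $\widehat\phi=n^{X_L}\widehat p^{\,Q_L}$. At $p_\star=n^{-1/\rho+\eta}$, using $Q_L=\rho X_L+c_j$, this equals
\[
 n^{X_L(1-\rho/\rho)+\rho\eta X_L-c_j/\rho+\eta c_j},
\]
up to the exact exponent bookkeeping. The $X_L$-linear term cancels except for $\rho\eta X_L$, so
\[
 \widehat\phi= n^{\rho\eta X_L - c_j(1/\rho-\eta)}.
\]
Strictly balanced subextensions have smaller densities, so their counts are even larger along the trajectory. Choosing $L_j=\lceil C_F/\eta\rceil$ makes $\rho\eta X_L\ge c_j/\rho+1$, hence $\widehat\phi\ge n$. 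The chain then has $k+La\le \eta^{-4}$ vertices for small $\eta$. Feeding these facts into Proposition~\ref{prop:JK-handoff} on $\mathcal G_\star$ gives the stated relative error $n^{-c}$.
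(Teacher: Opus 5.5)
Your trajectory computation, the choice $L_j=O_F(\eta^{-1})$ and the final appeal to the handoff counts are fine and match the paper. The gap is in the strict-balance step, where you collapse all savings into a single constant $\epsilon=\min(\epsilon_F,c_j)$. Write $x=|U_0|$ and $y=X_L-x$, and let $\Delta\ge0$ be the total saving, so that $e(C_L[U])-e(C_L[I])=\rho x+c_j-\Delta$. Comparing with the full density $\rho+c_j/X_L$, strict balance at $U$ is equivalent to $\Delta X_L>c_j y$. A uniform bound $\Delta\ge\epsilon$ gives this only when $y<\epsilon X_L/c_j$. Your criterion $X_L\epsilon>c_j$ is exactly the case $y=1$, i.e.\ $U$ missing a single vertex. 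For small $U$, e.g.\ $x=1$, your bound on the rooted density of $U$ is $\rho+c_j-\epsilon$. For long chains this does not beat $\rho+c_j/X_L$ unless $\epsilon\ge c_j$, and in general $\epsilon_F<c_j$. For instance, for $F=K_5^{(3)}$ and $j=2$, a block containing $e_{i-1}$ and one new vertex saves only $\tfrac92-3=\tfrac32$, while $c_2=\tfrac72$. So as written the argument proves nothing about most proper $U$, and lengthening the chain makes it worse rather than better.

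The missing idea is that the saving must either be the full $c_j$ or grow linearly in $y$. Both follow from your own ingredients if you do not take the minimum.
\begin{itemize}
\item Suppose $U\cap e_h=J$ for some $1\le h<L$ (in particular, if some intermediate block has $s_h=0$). Then Lemma~\ref{lem:one-ended-balance} applied to the reversed suffix rooted at $e_L$ gives $\Delta\ge\rho(k-j)-1=c_j$ exactly. Hence $\Delta X_L\ge c_jX_L>c_jy$, since $x>0$. Note that for small $U$ the intermediate blocks save nothing, so this exact $c_j$ is essential.
\item Otherwise every block with $s_i<a$ has $s_i>0$; for $i=L$ this holds because $e_L\subseteq I$. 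Each such block saves at least $\epsilon_F$ while missing at most $a$ vertices. Summing over blocks gives $\Delta\ge(\epsilon_F/a)\,y$, and so $\Delta X_L>c_jy$ as soon as $X_L>ac_j/\epsilon_F$, uniformly over role sequences.
\end{itemize}
This is precisely the paper's dichotomy, with $\delta_i\ge\gamma_{F,j}(a-s_i)$ in the second case.
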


\begin{proof}
Fix $I\subsetneq U\subsetneq V(C_L)$.
If $U$ meets some intermediate edge only in $J$, the reversed suffix
supplies the full excess $c_j$.  Otherwise every incomplete block
pays a positive deficit proportional to its missing vertices, which
beats that fixed excess when the chain is sufficiently long.

With $s_i=|U\cap W_i|$, let $q_i$ count the edges of
$C_L[U]$ in $E(F_i)\setminus\{e_{i-1}\}$ and put
\[
 \delta_i=\rho s_i-q_i\ge0,\qquad
 \Delta=\sum_i\delta_i,\qquad
 x=|U|-|I|,\qquad y=X_L-x=La-\sum_i s_i.
\]
As $e(C_L[I])=2$, we have
\[
 e(C_L[U])-2=\sum_iq_i-1=\rho x+c_j-\Delta.
\]
By \eqref{eq:chain-parameters}, strict rooted balance for this
$U$ is therefore equivalent to
\begin{equation}\label{eq:reuse-surplus}
 \Delta X_L>c_jy.
\end{equation}

If $U\cap e_h=J$ for some $1\le h<L$, apply
Lemma~\ref{lem:one-ended-balance} to the \emph{reversed suffix}
rooted at $e_L$.  Its vertices retained outside $e_L$ number
$j+\sum_{i>h}s_i-k$, and its tested edges retained outside $e_L$
number $\sum_{i>h}q_i-1$.  Thus
$\sum_{i>h}\delta_i\ge\rho(k-j)-1=c_j$.
Since $x>0$ implies $y<X_L$, this proves
\eqref{eq:reuse-surplus}.

Otherwise $U\cap e_h\supsetneq J$ for every $1\le h<L$.
Whenever $s_i<a$ we have $s_i>0$: for $i<L$ this follows from
the presence of a vertex of $e_i\setminus J$, and for $i=L$
from $e_L\subset I$.  If $q_i=0$, then $\delta_i=\rho s_i>0$.
If $q_i>0$, adjoining the incoming edge and applying the strict
part of \eqref{eq:original-template-balance} gives $\delta_i>0$.
There are only finitely many local role pairs and vertex subsets,
so some $\gamma_{F,j}>0$ satisfies
$\delta_i\ge\gamma_{F,j}(a-s_i)$ whenever $s_i<a$.
Hence $\Delta\ge\gamma_{F,j}y$, which gives
\eqref{eq:reuse-surplus} as soon as $X_L>c_j/\gamma_{F,j}$.
This threshold is uniform over role sequences.  If $U=I$
there are no free vertices, and if $U=V(C_L)$ we have equality,
as required.

Finally, at $p_\star=n^{-1/\rho+\eta}$,
\begin{equation}\label{eq:exponent-chain-weight}
 \log_n\widehat\phi_{C_L,I}(i^\star)
 =-c_j/\rho+\eta(Lb-1)+o(1).
\end{equation}
Choose a fixed $L_j$ above the balance threshold so that this
exponent exceeds $1$ by a positive constant.  Then
$L_j=O_F(\eta^{-1})$ and $|V(C_{L_j})|\le\eta^{-4}$
for small $\eta$.  Lemma~\ref{lem:JK-cutoff-adapter} and the
balanced-template count \eqref{eq:JK-balanced-count} apply
uniformly to these finitely many decorated templates and all their
root injections.
\end{proof}

\begin{proof}[Proof of Lemma~\ref{lem:fibre-mixing}]
Fix $j,J$ and the fixed length $L=L_j$ from
Lemma~\ref{lem:persistent-chain-count}; put $m_J=|\mathcal R_J|$.
Lemma~\ref{lem:role-counts} and \eqref{eq:resource-degrees} give
\begin{equation}\label{eq:resource-fibre-uniform}
 \sigma_J(e)=\frac{1+O(n^{-c})}{m_J}
 \qquad(e\in\mathcal R_J).
\end{equation}
The one-edge rooted count at $J$ gives
$m_J=(1+O(n^{-c}))\binom{n-j}{k-j}p_\star$;
its trajectory is polynomially large since $c_j>0$.
For $e\supset J$ and $x\in e\setminus J$, the enlarged-root
bound \eqref{eq:JK-enlarged-root-count} gives
\[
 |\{f\in\mathcal R_J:x\in f\}|
 \le(1+\log n)^{O_F(1)}\max\{1,n^{k-j-1}p_\star\}.
\]
Indeed the least trajectory after also prescribing $x$ is
$\min\{n,n^{k-j}p_\star\}$.  Dividing by $m_J$ shows that
\begin{equation}\label{eq:nongeneric-intersection}
 \frac{|\{f\in\mathcal R_J:e\cap f\supsetneq J\}|}{m_J}
 \le (1+\log n)^{O_F(1)}
 \bigl(n^{-1}+(n^{k-j}p_\star)^{-1}\bigr)\le n^{-c}.
\end{equation}
Thus $G_e=\{f\in\mathcal R_J:e\cap f=J\}$ has
$|G_e|=(1+O(n^{-c}))m_J$.

Expand $K_J^L(e,f)$ over its successive copies of $F$.
A nongeneric trajectory has a first step at which a new copy
uses a previously exposed vertex outside its incoming role edge.
Prescribing that vertex costs a factor $n^{-c}$ in the
single-copy rooted extension count: for a proper rooted subtemplate
with tested edges this follows from strict $k$-balance, for one
with no new edge from its additional vertex, and for the full
copy from the factor $n^{b\eta}$.  The enlarged-root estimate
\eqref{eq:JK-enlarged-root-count}, normalised by the
unrestricted count $n^ap_\star^b$ and
Lemma~\ref{lem:role-counts}, therefore makes the conditional
probability of each first reuse at most $n^{-c}$.
As $L$ and the number of previously exposed vertices are bounded,
the total nongeneric row mass is $O(n^{-c})$.

For $f\in G_e$, label the $L$ copies of a generic chain,
choose their ordered incoming and outgoing role edges, and
identify consecutive role edges along the common $J$.
These choices give finitely many decorated templates, each with
$X_L$ free vertices and $Q_L$ tested edges.  The number of
compatible root bijections depends only on $k,j$, not on
$(e,f)$, and every generic ordered trajectory has the same
multiplicity $\operatorname{aut}(F)^L$ among these labelled
embeddings.  Lemma~\ref{lem:persistent-chain-count} estimates
each rooted count by $(1+O(n^{-c}))n^{X_L}p_\star^{Q_L}$.
It follows that the number of generic trajectories from $e$ to
$f$ is $(1+O(n^{-c}))a_{n,j,L}$, for a positive constant
$a_{n,j,L}$ independent of the endpoints.
By Lemma~\ref{lem:role-counts} and \eqref{eq:resource-degrees},
each step denominator is
$(1+O(n^{-c}))(r-1)\lambda_jD_\star/\binom kj$.
Hence the generic contribution to $K_J^L(e,f)$ is
$(1+O(n^{-c}))b_{n,j,L}$ on $G_e$ and zero elsewhere,
with $b_{n,j,L}>0$ independent of $e,f$.

Put $\varepsilon=n^{-c}$, decreasing $c$ if necessary.
Every row of $K_J^L$ has exceptional mass $O(\varepsilon)$;
its generic mass is $(1+O(\varepsilon))b_{n,j,L}$ at each
of $(1+O(\varepsilon))m_J$ good endpoints.
As the rows sum to one,
$b_{n,j,L}=(1+O(\varepsilon))/m_J$.  Together with
\eqref{eq:resource-fibre-uniform}, this gives
\[
 \sup_e\sum_f|K_J^L(e,f)-\sigma_J(f)|=O(\varepsilon).
\]
We deduce $\|K_J^L-\Pi_J\|_{\infty\to\infty}=O(\varepsilon)$
and so $\|K_J^L-\Pi_J\|_{2\to2}=O(\varepsilon)$ by self-adjointness.
\end{proof}

\subsection{Counting chains of surviving edges}
\label{subsec:shadow-chain-counts}

We prove Lemma~\ref{lem:shadow-fibre} by counting chains of
surviving $k$-edges rather than copies of $F$.

Fix a feasible $\ell<s$.  A \emph{shadow chain retaining $J$}
of length $L$ is constructed from mutually disjoint labelled sets
\[
 J,U_0,\ldots,U_L,P_1,\ldots,P_L,
 \qquad |J|=\ell,\quad |U_i|=s-\ell,
 \quad |P_i|=f:=k-2s+\ell\ge0,
\]
with named $s$-states and $k$-edges
\begin{equation}\label{eq:shadow-chain-definition}
 S_i=J\cup U_i\quad(0\le i\le L),\qquad
 e_i=J\cup U_{i-1}\cup U_i\cup P_i\quad(1\le i\le L).
\end{equation}
Its endpoint root is $I=S_0\cup S_L$, it has $L$ tested edges,
and its number of non-root vertices is
\begin{equation}\label{eq:shadow-chain-free}
 X_L=L(k-s)-(s-\ell).
\end{equation}
A \emph{generic shadow trajectory} is an injective,
edge-preserving embedding of this ordered template; we do not
require an induced embedding.  No vertices are identified beyond
the common $J$ and consecutive shared states.

\begin{lemma}[Shadow-chain balance]\label{lem:shadow-chain-balance}
For every feasible $\ell<s$ and $L\ge2$, the template
\eqref{eq:shadow-chain-definition} is strictly balanced when
rooted at $I$.  Every proper rooted subtemplate with at least
one tested edge has density at most $1/(k-s)$, whereas the
full rooted density is
\begin{equation}\label{eq:shadow-chain-strict-balance}
 \frac{L}{L(k-s)-(s-\ell)}>\frac1{k-s}.
\end{equation}
\end{lemma}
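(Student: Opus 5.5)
We argue directly from the explicit structure of the template \eqref{eq:shadow-chain-definition}, grouping the tested edges contained in a vertex set $U$ into consecutive runs and bounding the rooted density of each run separately. First compute the full rooted density. The root $I=S_0\cup S_L=J\cup U_0\cup U_L$ contains no tested edge when $L\ge2$, since each $e_i$ contains a nonempty $U_{i-1}$ or $U_i$ set (or both) outside $I$. Counting free vertices, each edge $e_i$ contributes its private $P_i$, and the interior states $U_1,\dots,U_{L-1}$ are shared. This gives $X_L=Lf+(L-1)(s-\ell)=L(k-s)-(s-\ell)$, using $f+(s-\ell)=k-s$, in agreement with \eqref{eq:shadow-chain-free}. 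Since $\ell<s$, the full density $L/(L(k-s)-(s-\ell))$ is strictly larger than $1/(k-s)$, which is \eqref{eq:shadow-chain-strict-balance}.

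Now fix a proper rooted subtemplate, given by a vertex set $U$ with $I\subseteq U\subsetneq V$ together with some of its tested edges. Omitting edges only lowers the density, so we may take all $e_i\subseteq U$. Let $\mathcal S=\{i:e_i\subseteq U\}$ and split $\mathcal S$ into maximal runs of consecutive indices $\{a,\dots,b\}$ of length $t=b-a+1$. A run spans the free vertices $P_a,\dots,P_b$ and those of $U_{a-1},\dots,U_b$ not lying in the root. There are three cases.
\begin{itemize}
\item If the run avoids both ends ($a>1$ and $b<L$), it spans $tf+(t+1)(s-\ell)=t(k-s)+(s-\ell)$ free vertices.
\item If it touches exactly one end, one of $U_0,U_L$ lies in $I$, leaving exactly $t(k-s)$ free vertices.
\item If it touches both ends, then $\mathcal S=[L]$ and $U=V$, which is excluded.
\end{itemize}
Distinct runs are separated by a missing index, so their free vertex sets are disjoint. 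Hence
\[
 |U\setminus I|\ \ge\ \sum_{\text{runs}} t\,(k-s)=|\mathcal S|(k-s),
\]
and any further vertices of $U$ only enlarge the left-hand side. When $|\mathcal S|\ge1$ the rooted density is therefore at most $1/(k-s)$, which proves the stated bound for proper subtemplates with a tested edge. Subtemplates with no tested edge have density $0$. Together with the full density computed above, this gives strict balance.

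The only delicate point is the bookkeeping: one must confirm that the end states $U_0,U_L$ are the only shared sets absorbed by the root, and that $J$ lies in $I$. The latter holds because $\ell$ is fixed and $J\subset S_0$. Feasibility $f\ge0$ is used only to make the template well defined.
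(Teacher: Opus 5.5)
Your proof is correct and is essentially the paper's argument. Both split the retained tested edges into maximal runs of consecutive positions and note that each run forces its private sets $P_i$ and all its $U$-blocks outside the root, losing one block only when the run touches an end of the chain, so at least $q(k-s)$ non-root vertices are forced; you bound each run separately, while the paper sums these counts as $qf+(q+c-h)(s-\ell)$.

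One small difference: by indexing subtemplates by vertex sets $U\subsetneq V$, you pass over subtemplates that keep every vertex but omit some edges. The paper's edge-based count handles these directly, and your run count applied to the retained edges covers them too; they are irrelevant to strict balance anyway.
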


\begin{proof}
Suppose a proper rooted subtemplate retains $q>0$ of the
resource edges, forming $c$ intervals of positions.  Let $h$
be the number of these intervals that touch an endpoint of the
chain.  Here $h\le2$ and $c-h\ge0$, since an interval touching
both endpoints would contain every edge.  Those $q$ edges
force all $qf$ of their private vertices and at least
$(q+c-h)(s-\ell)$ vertices from the intermediate $U_i$'s
outside the prescribed root.  They therefore force at least
\[
 qf+(q+c-h)(s-\ell)
 =q(k-s)+(c-h)(s-\ell)\ge q(k-s)
\]
non-root vertices.  This bounds their density by $1/(k-s)$.
A subtemplate with no tested edges has density zero.
The full chain has $L$ tested edges and
\eqref{eq:shadow-chain-free} free vertices, giving
\eqref{eq:shadow-chain-strict-balance}.
\end{proof}

Choose a fixed $L\ge2$, uniformly for the finitely many
feasible $\ell$, so large that
\begin{equation}\label{eq:shadow-chain-length}
 L(k-s-1/\rho)>s-\ell.
\end{equation}
Such a choice exists by \eqref{eq:shadow-density-gap}.
Then the chain's terminal rooted trajectory is
\[
 \widehat\phi_{\mathcal H,I}(i^\star)
 =n^{L(k-s)-(s-\ell)}p_\star^L
 =n^{L(k-s-1/\rho)-(s-\ell)+L\eta},
\]
a fixed positive power of $n$.  Its order
$s+L(k-s)\le\eta^{-4}$ for sufficiently small $\eta$.
Lemma~\ref{lem:JK-cutoff-adapter} and
\eqref{eq:JK-balanced-count} therefore count its embeddings,
with relative error $n^{-c}$, uniformly over prescribed endpoint
injections.

\begin{proof}[Proof of Lemma~\ref{lem:shadow-fibre}]
Fix $J,\ell,L$ as above.  By \eqref{eq:uniform-shadow-law},
$\nu_J$ is pointwise $1+O(n^{-c})$ times uniform on the
$m_J=\binom{n-\ell}{s-\ell}$ states containing $J$.
For a starting state $S_0$, the possible generic endpoints are
\[
 G_{S_0}=\{S_L\supset J:S_0\cap S_L=J\}.
\]
They number $\binom{n-s}{s-\ell}=(1+O_s(n^{-1}))m_J$.

Expand $(\mathsf L_J^{(\ell)})^L(S_0,S_L)$ over
intermediate states and chosen resource edges.  A trajectory is
generic precisely when it realises
\eqref{eq:shadow-chain-definition} injectively.  At the first
nongeneric step, the selected $k$-edge $e\supset S$ contains
some previously exposed vertex $x\notin S$.
Conditioning on the prescribed overlap $S\cap T=J$ does not
change the choice of $e$, since each $e\supset S$ allows
$\binom{k-s}{s-\ell}$ choices of $T$.  The enlarged-root bound
\eqref{eq:JK-enlarged-root-count}, applied to a one-edge template
rooted at $S$ with the extra prescription $x$, gives
\[
 \Prb(x\in e\mid S,J)
 \le(1+\log n)^{O_F(1)}
 \bigl(n^{-1}+(n^{k-s}p_\star)^{-1}\bigr)\le n^{-c}.
\]
Indeed, the smallest enlarged-root trajectory is
$\min\{n,n^{k-s}p_\star\}$, and the denominator is
$\Theta_F(n^{k-s}p_\star)$ by the one-edge handoff count.
There are only $O_{k,s,L}(1)$ earlier vertices and steps, so
nongeneric trajectories have total row mass $n^{-c}$.

For fixed $S_0,S_L\in G_{S_0}$, every compatible injection of
the labelled endpoint root has the same number of choices,
independent of $S_0,S_L$; it simply orders the vertices of
$J$, $S_0\setminus J$ and $S_L\setminus J$.  Moreover, every
generic ordered trajectory corresponds to the same number of
labelled embeddings of \eqref{eq:shadow-chain-definition},
independent of its endpoints: the labels order the intermediate
$U_i$ and the private $P_i$.  The rooted chain count above gives
the common trajectory $n^{X_L}p_\star^L$, with relative error
$n^{-c}$.  Hence the number of generic ordered trajectories
from any $S_0$ to any $S_L\in G_{S_0}$ is
$(1+O(n^{-c}))a_{n,\ell,L}$ for one positive number
$a_{n,\ell,L}$ independent of the endpoints.

In the expansion of \eqref{eq:lower-shadow-kernel}, each step
has weight
\[
 \frac{d_e}{\binom{k-s}{s-\ell}\sum_{g\supset S}d_g}
 =\frac{1+O(n^{-c})}
 {\binom{k-s}{s-\ell}\binom{n-s}{k-s}p_\star},
\]
by \eqref{eq:resource-degrees} and
the one-edge handoff count.  The generic contribution to
the $L$-step endpoint transition is therefore
$(1+O(n^{-c}))b_{n,\ell,L}$ on $G_{S_0}$, independent of
$S_0,S_L$, and zero outside $G_{S_0}$.

The same row-sum argument as in the proof of
Lemma~\ref{lem:fibre-mixing} now makes the common generic
transition weight $(1+O(n^{-c}))/m_J$.
The exceptional mass is $O(n^{-c})$, so the rows of
$(\mathsf L_J^{(\ell)})^L$ differ from $\nu_J$ in total variation
by $O(n^{-c})$. This gives an $L^\infty$ bound, so by self-adjointness we deduce
\[
 \|\mathsf L_J^{(\ell)}-\Pi_J^{(s)}\|_{L^2(\nu_J)\to L^2(\nu_J)}
 \le n^{-c/L}.
\]
This proves Lemma~\ref{lem:shadow-fibre}, and hence completes
the proof of Theorem~\ref{thm:resource-semigroup}.
\end{proof}

\section{Short incidence cycles}
\label{sec:cycles}

It remains to justify the assumption that few candidates lie on short cycles.
Recall that $b=r-1$ and that $\chi_R(\Gamma_\star)$ is the fraction of
candidates on incidence cycles of length at most $R$.

\begin{theorem}[Short-cycle candidate bound]
\label{thm:cyclic-sparsity}
For each fixed integer $R\ge4$ there are constants
$C_{F,R}<\infty$ and $\eta_{F,R}>0$ such that, if
$0<\eta<\eta_{F,R}$, then uniformly on $\mathcal G_\star$,
\begin{equation}\label{eq:short-cycle-target}
 \chi_R(\Gamma_\star)
 \le n^{-1/b+C_{F,R}\eta}.
\end{equation}
\end{theorem}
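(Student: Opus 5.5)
We bound the number of candidates on short incidence cycles by a first-moment style count of the underlying configurations in $H_\star$, using the Joos--K\"uhn handoff counts of Section~\ref{sec:handoff-inputs}. A simple incidence cycle of length $2\ell\le R$ in $\Gamma_\star$ is a cyclic sequence of distinct copies $F_1,\dots,F_\ell$ of $F$ and distinct surviving edges $e_1,\dots,e_\ell$ with $e_{i-1},e_i\in E(F_i)$ (indices mod $\ell$). Its union is a $k$-graph $Y$ on at most $\ell m\le Rm$ vertices, which is the image of one of finitely many \emph{cycle templates}: a cyclic arrangement of $\ell$ labelled copies of $F$, glued along the prescribed edges and possibly with further vertex identifications. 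It therefore suffices, for each fixed template $Y$, to bound the number of embeddings of $Y$ into $H_\star$ meeting a given candidate, and compare with $M=n^{k-1/\rho_F+r\eta+o(1)}$ from Corollary~\ref{cor:near-regular-handoff}.

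For each template we compute its density deficit. Build $Y$ by adding copies $F_1,\dots,F_\ell$ in order. The first copy contributes $m$ vertices and $r$ edges. Each intermediate copy $F_i$ ($1<i<\ell$) meets the previous union at least in $e_{i-1}$. If it meets it exactly there, it adds $a=m-k$ vertices and $b$ edges; any extra overlap $U$ costs vertices at rate $\rho$ per edge by Lemma~\ref{lem:one-ended-balance}. The closing copy $F_\ell$ contains both $e_{\ell-1}$ and $e_\ell$, which are already present and distinct. By strict $k$-balance applied with root the union of these two edges (or more overlap), it adds $v'$ new vertices and at most $\rho v' - c$ new edges, for some fixed $c>0$. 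Indeed, rooting $F$ at two distinct edges, with $|e\cup f|\ge k+1$, gives excess at least $\rho(k+1-k)-\ldots$; more cleanly, $e(F)-2\le \rho(m-|e\cup f|)-(\rho(|e\cup f|-k)-1)$, and $\rho(|e\cup f|-k)-1$ is at least $\rho-1$. This can be negative, so instead I would use the cleaner global count below.

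Summing, the expected number $\widehat\phi_Y=n^{v(Y)}p_\star^{e(Y)}$ satisfies $\widehat\phi_Y\le M\cdot n^{-1/b+O(R\eta)}$. The mechanism is as follows. Write $v(Y)-k=\rho^{-1}(e(Y)-1)-\theta$, with $\theta>0$ coming from the closing copy. In the tree-like part, each copy balances exactly, since $n^ap_\star^b=n^{b\eta}$. The cycle closure forces the last copy to reuse a second edge, losing a factor equal to one rooted edge count, roughly $n^{-(k-s)}p_\star^{-1}\cdot\ldots$. The extremal case is that $F_\ell$ shares only one extra vertex beyond $e_{\ell-1}$'s copy, and this is where the exponent $1/b$ comes from: saving one vertex costs $n^{-1}$ against $p_\star^{-1/\rho\cdot\ldots}$. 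I would verify by a finite optimisation over overlap patterns, using strict balance of every proper subgraph of $F$, that the worst saving is $n^{-1/b+O(\eta)}$ per candidate; the $O(R\eta)$ absorbs the $n^{b\eta}$ factors from the $\le R$ copies.

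To pass from expectations to actual counts on $\mathcal G_\star$, I would not require exact counts for $Y$, which need not be balanced. Instead I would count via successive rooted extensions: fix a candidate (an embedded $F_1$), then extend copy by copy. I would apply \eqref{eq:JK-enlarged-root-count} (upper bounds with polylog loss, $\max\{1,\cdot\}$) at each step, and \eqref{eq:rooted-F-count} for generic fresh steps. Each rooted extension count is bounded by $(\log n)^{O(1)}\max\{1,\widehat\phi\}$ for its least-dense subextension, and every step except closure has $\widehat\phi\ge n^{b\eta}$. Multiplying and dividing by $M$ bounds $\chi_R$ by the closure factor. The main obstacle is exactly this closing step: the closing copy's extension rooted at the (possibly large) overlap may have trajectory below $1$, where the $\max\{1,\cdot\}$ loses. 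I would handle this by choosing the ordering of copies around the cycle so that the smallest-trajectory extension is taken last, and by showing via strict balance that even then the product of trajectories is at most $n^{-1/b+C\eta}$ relative to $M$. The constant $C_{F,R}$ and the threshold $\eta_{F,R}$ come from the finitely many templates with at most $Rm$ vertices.
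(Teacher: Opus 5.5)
There is a genuine gap, and it lies mainly in how you pass from trajectories to actual counts.

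\textbf{The counting step cannot give any saving.} You fix a candidate ($M$ choices) and bound each successive rooted extension by the handoff upper bounds, but every such factor is at least $1$. The strictly balanced bound is $(1+\log n)^{O(1)}\max\{1,\widehat\phi\}$. The enlarged-root ratio $\widehat\phi_{A,I}/\widehat\phi_{B,I}$ is also $\ge1$, since $B=A$ is admissible. The step trajectories multiply to $\widehat\phi_Y/M\le n^{-1/b+O(\eta)}<1$, so in any ordering some step has trajectory below $1$, and its bound saturates at $1$. The best you can conclude is $\chi_R\le n^{O(\eta)}$, which is vacuous. Reordering the copies does not help. A bound uniform over root injections must hold at roots where the closing configuration is actually present, so it cannot drop below $1$; the $n^{-1/b}$ saving exists only on average over roots.

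The paper therefore counts embeddings of the whole cyclic union $S$ globally (Lemma~\ref{lem:cyclic-type-count}). The target $n^{k-1/\rho-1/b+O(\eta)}$ is a positive power of $n$, since $k-1/\rho-1/b\ge1-1/b>0$, so the $\max\{1,\cdot\}$ loss is now harmless.
\begin{itemize}
\item If $(S,\varnothing)$ is strictly balanced, Proposition~\ref{prop:JK-handoff}(iii) applies directly.
\item Otherwise one takes a densest strictly balanced subtemplate $B$ with $e(B)/v(B)\ge\rho$ (Joos--K\"uhn Lemmas~12.2, 12.3). It has only $n^{O(\eta)}$ copies, and the supercritical extension argument (their Lemma~12.6) gives another $n^{O(\eta)}$ completions.
\end{itemize}
So the non-balanced case you tried to sidestep is exactly the one that must be confronted.

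\textbf{The density input is not established.} You need, for every self-avoiding cyclic union $Y$, that $a(e(Y)-1)-b(v(Y)-k)\ge1$, equivalently $v(Y)-e(Y)/\rho\le k-1/\rho-1/b$. You abandon your closing-copy computation, and ``$\theta>0$ coming from the closing copy'' is false as a local statement.

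Adding copies one at a time, Lemma~\ref{lem:one-ended-balance} shows each step has nonnegative surplus. The surplus is strict if the overlap is a proper vertex subset carrying at least two edges, or if the copy contributes a new edge. However, the closing copy can lie entirely inside the earlier union. For $F=K_3$, take the triangles $123,124,134,234$ with shared edges $12,14,34,23$: the last triangle adds nothing, and the surplus arises at the third step. The paper obtains positivity from Joos--K\"uhn Lemma~12.5, which supplies an edge $e$ with $\rho_{S,e}>\rho$.

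Finally, the exact exponent $1/b$ does not come from a finite optimisation over overlap patterns. It comes from integrality: $b\theta=a(e(Y)-1)-b(v(Y)-k)$ is an integer, so $\theta>0$ forces $\theta\ge1/b$.
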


A \emph{self-avoiding cyclic $F$-union} is the union
$S=F_1\cup\cdots\cup F_\ell$ of distinct copies of $F$, together with
distinct resource edges $e_1,\ldots,e_\ell$ satisfying
$e_i\in E(F_i)\cap E(F_{i+1})$ (cyclic indices).  

\begin{lemma}[Bounded cyclic-type count]
\label{lem:cyclic-type-count}
Let $S$ be a fixed self-avoiding cyclic $F$-union with
$|V(S)|\le\eta^{-4}$.  On $\mathcal G_\star$, the number of embeddings
of $S$ into $H_\star$ is at most
\[
 \#\{S\hookrightarrow H_\star\}
 \le n^{k-1/\rho-1/b+O_{F,S}(\eta)}.
\]
\end{lemma}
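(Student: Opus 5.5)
We bound the count by a first-moment style estimate using the handoff counting inputs: on $\mathcal G_\star$, the number of embeddings of a fixed template $S$ is at most $(1+\log n)^{O(1)}\max_{S'}\widehat\phi_{S'}$-type quantities. More precisely, we use the upper-bound form of the Joos--K\"uhn counts (the enlarged-root estimate \eqref{eq:JK-enlarged-root-count} applied iteratively). This gives, for any $k$-graph $S$ of bounded order, a bound of the form
\[
\#\{S\hookrightarrow H_\star\}\le n^{O_S(\eta)}\min_{\emptyset\ne S'\subseteq S}\,\max\{\cdots\},
\]
and in practice $n^{v(S)}p_\star^{e(S)}$ times a correction accounting for sparse substructures. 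So the plan is to build $S$ copy by copy and bound the count of each successive extension.

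The key steps are as follows. First, order the copies $F_1,\dots,F_\ell$ along the cycle. Embed $F_1$: this costs $n^{m}p_\star^{r}$, which is $n^{k}p_\star\cdot n^ap_\star^b$. Equivalently, since $D_\star=n^{b\eta+o(1)}$, it is $N\cdot n^{O(\eta)}=n^{k-1/\rho+O(\eta)}$. Next, each subsequent copy $F_i$ ($2\le i<\ell$) is attached along the resource edge $e_{i-1}$ plus possibly further previously used vertices. By Lemma~\ref{lem:one-ended-balance}, a rooted extension with root containing a role edge has $e-1\le\rho(\#\text{new vertices})$. Hence each extension count is at most $(1+\log n)^{O(1)}\max\{1,n^{x}p_\star^{q}\}\le n^{O(\eta)}$, where $x$ is the number of new vertices and $q$ the number of new tested edges: indeed $n^xp_\star^q=n^{x-q/\rho+q\eta}$ and $q\le\rho x$. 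This is where the enlarged-root estimate is used, with the trajectory bounded by $n^{O(\eta)}$.

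The saving of $n^{-1/b}$ must come from closing the cycle. The last copy $F_\ell$ must contain both $e_{\ell-1}$ and $e_\ell\in E(F_1)$, with $e_\ell\ne e_{\ell-1}$ (distinctness and self-avoidance). So its root $U_0=V(F_\ell)\cap(\text{previous vertices})$ strictly contains a role edge and contains another edge $e_\ell$. Let $U_0$ have $k+t$ vertices, $t\ge1$ (two distinct $k$-sets). Extending from $U_0$ adds $a-t$ vertices and at most $r-1-e'$ new tested edges, where $e'\ge1$ counts the edges of $F_\ell$ already present inside $U_0$ besides $e_{\ell-1}$. By strict $k$-balance (\eqref{eq:original-template-balance} strict), $e(F[U_0])-1<\rho t$ when $U_0\ne V(F)$. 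The full extension exponent is
\[
(a-t)-(b-(e(F[U_0])-1))/\rho=-t+(e(F[U_0])-1)/\rho.
\]
Since $e(F[U_0])\ge2$, we need this $\le -1/b$ up to the discrete gap. The simplest case, $U_0=e_{\ell-1}\cup e_\ell$ with one extra vertex and $e(F[U_0])=2$, gives $-1+1/\rho$. In general we must check the maximum over $U_0$ of $-t+(e(F[U_0])-1)/\rho$, including $U_0=V(F)$, where the extension has zero new edges but $F_\ell$ is a distinct copy. Also, sub-extensions with trajectory below $1$ only contribute $\max\{1,\cdot\}$, so the count of the last step is at most $n^{O(\eta)}\max\{1,n^{-\theta}\}$; we need the genuine saving, not the $\max$ with $1$. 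We will therefore instead bound a first-moment quantity by changing the order: embed the cycle so that the "closing" happens where the saving is realised, or count via the number of candidates $M$ times the probability. I expect this step to be the main obstacle. Concretely, I would normalise so that the target $n^{k-1/\rho-1/b}=N\cdot n^{-1/b}$ reads as "a fraction $n^{-1/b}$ of resources". The natural saving $n^{-1/b}$ presumably arises as $(n^ap_\star^b)$-scaled: one extra vertex coincidence costs $n^{-1}$, while $p_\star^{-1/\rho}=n^{1/\rho\cdot(1/\rho)^{-1}}$ compensates edges. So I would split into the case $U_0=V(F)$ (two distinct copies of $F$ on one vertex set, handled by counting vertex sets with two edge-sets, saving from $p_\star^{\ge1}$ extra edges, giving $p_\star\le n^{-1/\rho+\eta}\le n^{-1/b+\eta}$ since $\rho\le b$) and the proper case, handled by strict balance with a uniform positive gap over the finitely many $U_0$, which is at least $\min(1/b,\cdot)$ after checking the extremal configurations.
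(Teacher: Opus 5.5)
Your bound for $F_1$ ($n^mp_\star^r=n^{k-1/\rho+r\eta}$) and your $n^{O(\eta)}$ bound for intermediate extensions are fine. By Lemma~\ref{lem:one-ended-balance}, every tail of an extension rooted at a set containing $e_{i-1}$ has exponent at most $O(\eta)$. Your closing-step arithmetic is also right: in the proper case, strict balance plus integrality gives exponent $(a(e(F[U_0])-1)-bt)/b\le-1/b$.

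The argument breaks at the step you yourself flag as the main obstacle, and your suggested repairs do not close it. On $\mathcal G_\star$, the only bounds available for a prescribed root have the form $(1+\log n)^{C}\max\{1,\widehat\phi\}$ (Proposition~\ref{prop:JK-handoff}(iii),(iv)). A root whose predicted number of extensions is $n^{-1/b}$ can still have one extension. So a negative exponent at one step of a copy-by-copy product of worst-case counts buys nothing. A ``candidates times probability'' first moment is not available either: the lemma is a deterministic bound on $\mathcal G_\star$, and $H_\star$ is not a product measure.

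Moreover, the saving need not sit at the last copy. For triangles ($a=1$, $b=\rho=2$), take the four triangles $xyz,xyw,ywz,xzw$ of a $K_4$ with $e_1=xy$, $e_2=yw$, $e_3=wz$, $e_4=xz$. This is a self-avoiding cyclic union in which $F_4$ adds neither a vertex nor an edge. The whole saving $n^{-1/2}=n^{-1/b}$ occurs at the intermediate copy $F_3$, where your scheme only charges $n^{O(\eta)}$. The same example refutes your treatment of the case $U_0=V(F)$, which relied on at least one new edge.

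The missing idea is to count $S$ as a whole. The paper first extracts a global surplus: by \cite[Lemma~12.5]{JoosKuhn} some resource edge $e$ has $\rho_{S,e}>\rho$. Integrality turns this into $a(e(S)-1)-b(v(S)-k)\ge1$, i.e.\ $v(S)-e(S)/\rho\le k-1/\rho-1/b$; this is the global version of your local integrality computation.

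It then converts this into a count via a decomposition in which dense pieces come first, using \cite[Lemmas~12.2 and~12.3]{JoosKuhn}. Either $(S,\varnothing)$ is strictly balanced, and one application of Proposition~\ref{prop:JK-handoff}(iii) to all of $S$ gives $(1+\log n)^{O(1)}\max\{1,n^{v(S)}p_\star^{e(S)}\}$. Here the $\max$ with $1$ is harmless because $k-1/\rho-1/b>0$. Or $S$ contains a strictly balanced core $B$ with $e(B)/v(B)\ge\rho$, counted in $n^{O(\eta)}$ ways. In that case \cite[Lemma~12.6]{JoosKuhn} shows that all completions from roots containing $V(B)$ keep rooted density at least $\rho$, so they also cost only $n^{O(\eta)}$.

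What your argument lacks is control of every tail $n^{v(S)-|U|}p_\star^{e(S)-e(S[U])}$ over all $U\subseteq V(S)$, not just along one chosen copy order.
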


\begin{proof}[Proof of Theorem~\ref{thm:cyclic-sparsity}, assuming Lemma~\ref{lem:cyclic-type-count}]
Fix $R\ge4$.  A simple incidence cycle of length $2\ell\le R$
in $\Gamma_\star$ alternates between distinct candidates and distinct
resources.  Its candidates are therefore distinct copies
$F_1,\ldots,F_\ell$ of $F$, while its resources are distinct
$k$-edges $e_1,\ldots,e_\ell$ with
$e_i\in E(F_i)\cap E(F_{i+1})$ (cyclic indices).  Hence the
union $S=\bigcup_{i=1}^\ell F_i$ is exactly a self-avoiding cyclic
$F$-union in the sense defined above, including when $\ell=2$ or when
there are further overlaps.  It has at most $mR/2$ ordinary vertices.
There are finitely many isomorphism types of such unions, depending
only on $F,R$, and for each embedded union only $O_{F,R}(1)$ choices of
the ordered candidates, shared resources, incidence cycle and a candidate
on that cycle.  Consequently the number of candidates on short cycles satisfies
the explicit finite-type bound
\[
 M\chi_R(\Gamma_\star)
 \le O_{F,R}(1)
 \sum_{\substack{S\text{ a cyclic }F\text{-union type}\\
                    v(S)\le mR/2}}
 \#\{S\hookrightarrow H_\star\}.
\]
Take $\eta$ sufficiently small for the finite collection of union types.  Summing the bound of Lemma~\ref{lem:cyclic-type-count} over the
finitely many types bounds the number of candidates on cycles of
length at most $R$ by
\(
 n^{k-1/\rho-1/b+O_{F,R}(\eta)}.
\)
Corollary~\ref{cor:near-regular-handoff} gives
$M=n^{k-1/\rho+r\eta+o(1)}$; dividing by $M$ gives \eqref{eq:short-cycle-target}.
\end{proof}

\subsection{Estimates at the handoff}
\label{sec:handoff-inputs}

Here we record some simple consequences of the
rooted-template estimates of Joos and K\"uhn \cite{JoosKuhn}
used in Sections~\ref{sec:persistent-chains}--\ref{sec:cycles}.
Recall $a=m-k$, $b=r-1$ and $\rho=\rho_F=b/a$ from the
handoff reduction.  Rooted-template notation and the deterministic
trajectory $\widehat\phi_{A,I}$ were defined in
Subsection~\ref{subsec:removal-matching}.
Take the parameter of \cite{JoosKuhn} to be
$\varepsilon_{\rm JK}=2\eta^2$.  For sufficiently small fixed $\eta$,
their controlled trajectory reaches density
$n^{-1/\rho+2\eta^2}<p_\star$, and
$\varepsilon_{\rm JK}^{-4}>\eta^{-4}$.  Their Section~7 stopping event tracks all balanced and strictly
balanced rooted templates of order at most
$\varepsilon_{\rm JK}^{-4}$, so in particular those of order
at most $\eta^{-4}$.  Enlarged prescriptions are covered by
their Lemma~7.16.

For deterministic $i$ before termination define the error scale
\[
 \zeta(i):=\frac{n^{\eta^2}}
 {n^{1/2}\widehat p(i)^{\rho/2}}.
\]
At the handoff, $\zeta(i^\star)=n^{\eta^2-\rho\eta/2+o(1)}
=n^{-\Omega_F(\eta)}$ for sufficiently small fixed $\eta$.
For $\theta>0$, let
\[
 i_{A,I}^{\theta}
 :=\min\{i:\widehat\phi_{A,I}(i)
 \le\zeta(i)^{-\theta}\}.
\]

\begin{proposition}[Uniform handoff event for bounded templates]
\label{prop:JK-handoff}
Fix $F$ and sufficiently small $\eta>0$.  For every sufficiently small
$\delta_{\rm JK}>0$ depending only on $F,\eta$, there is an event
$\mathcal G_\star$ with
\[
 \Prb(\mathcal G_\star^c)
 \le\exp\{-(\log n)^{4/3}\}
\]
such that the following hold simultaneously for \emph{all} rooted
$k$-templates of order at most $\eta^{-4}$ satisfying the hypotheses
in the respective parts, and for \emph{all} admissible prescribed root
injections.  In parts (iii) and (iv), $C_{A,I}$ and $C_{A,J}$ denote
fixed template-dependent exponents.

\begin{enumerate}[label=\textup{(\roman*)}]
\item For every role edge $f\in E(F)$, every resource
$e\in E(H_\star)$, and every bijection $\sigma:f\to e$,
\begin{equation}\label{eq:rooted-F-count}
 \Phi_{F,\sigma}
 =\left(1+n^{-\Omega_{F,\eta}(1)}\right)n^ap_\star^b.
\end{equation}

\item Let $(A,I)$ be a balanced template with
$|V(A)|\le\eta^{-4}$.  If
\(
 i^\star<i_{A,I}^{\delta_{\rm JK}^{1/2}},
\)
then for all root embeddings $\psi$ we have
\begin{equation}\label{eq:JK-balanced-count}
 \Phi_{A,\psi}
 =\left(1+O(\zeta(i^\star)^{\delta_{\rm JK}})\right)
 \widehat\phi_{A,I}(i^\star).
\end{equation}

\item For every strictly balanced template $(A,I)$ with
$|V(A)|\le\eta^{-4}$ we have
\[
 \Phi_{A,\psi}
 \le(1+\log n)^{C_{A,I}}
 \max\{1,\widehat\phi_{A,I}(i^\star)\}.
\]

\item Let $I\subsetneq J\subseteq V(A)$ and
$|V(A)|\le\eta^{-4}$.  Among subtemplates $(B,I)\subseteq(A,I)$ whose vertex
set contains $J$, choose one minimising $\widehat\phi_{B,I}(i^\star)$.  Then
for all injections $\psi:J\hookrightarrow V(H_\star)$ we have
\begin{equation}\label{eq:JK-enlarged-root-count}
 \Phi_{A,\psi}
 \le(1+\log n)^{C_{A,J}}
 \frac{\widehat\phi_{A,I}(i^\star)}
 {\widehat\phi_{B,I}(i^\star)}.
\end{equation}
\end{enumerate}
\end{proposition}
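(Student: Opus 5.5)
The plan is to read every part of the proposition off the stopping-time analysis of Joos and K\"uhn~\cite{JoosKuhn}, run with the parameter $\varepsilon_{\rm JK}=2\eta^2$. We take $\mathcal G_\star$ to be the event that their Section~7 stopping time $\tau_{\rm JK}$ exceeds $i^\star$, meaning that none of their tracked template counts has left its window by step $i^\star$. Their main probabilistic estimate bounds the failure probability by $\exp\{-(\log n)^{4/3}\}$, or by something smaller that we weaken to this.

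Before using it we must check three points.
\begin{itemize}
\item Their controlled trajectory reaches density $n^{-1/\rho+2\eta^2}$, which is below $p_\star$, so $i^\star$ lies inside their range. On $\mathcal G_\star$ the process therefore reaches $i^\star$.
\item Their error function agrees with our $\zeta(i)$ up to the normalisation $n^{\eta^2}$ versus $n^{\varepsilon_{\rm JK}}$. Any mismatch only weakens exponents, which is harmless because $\zeta(i^\star)=n^{-\Omega_F(\eta)}$.
\item Templates of order at most $\eta^{-4}<\varepsilon_{\rm JK}^{-4}$ are among those they track.
\end{itemize}
Everything below is deterministic on $\mathcal G_\star$. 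Uniformity over all root injections comes for free, since the stopping event is defined by a supremum over injections.

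Part (ii) is then immediate. For a balanced template with $i^\star<i^{\delta_{\rm JK}^{1/2}}_{A,I}$, the trajectory at $i^\star$ still exceeds $\zeta^{-\delta_{\rm JK}^{1/2}}$. This is the regime in which their balanced-template count holds with relative error $\zeta^{\delta_{\rm JK}}$, after shrinking $\delta_{\rm JK}$ to match their exponent convention. We deduce part (i) from (ii).
\begin{itemize}
\item The template is $F$ rooted at a role edge $f$. It is strictly balanced precisely because $F$ is strictly $k$-balanced: for $f\subseteq U\subsetneq V(F)$ the rooted density is $(e(F[U])-1)/(|U|-k)<\rho$.
\item Its trajectory is $n^ap_\star^b=n^{b\eta+o(1)}$, while $\zeta(i^\star)^{-\theta}=n^{O(\theta\eta)}$. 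So for small $\delta_{\rm JK}$ we have $i^\star<i^{\delta^{1/2}_{\rm JK}}_{F,f}$.
\item The error $\zeta^{\delta_{\rm JK}}=n^{-\Omega_{F,\eta}(1)}$.
\end{itemize}

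For part (iii) we split into two cases.
\begin{itemize}
\item If $\widehat\phi_{A,I}(i^\star)$ is still above the threshold, part (ii) gives a two-sided estimate, which is stronger than needed.
\item Otherwise the trajectory at $i^\star$ is small, at most $\zeta^{-\theta}$. Here we invoke the Joos--K\"uhn upper bound for strictly balanced templates past their critical time, namely counts at most $(\log n)^{C_{A,I}}$. Counts are monotone decreasing along the process because $H(i)$ only loses edges, so this bound persists to step $i^\star$.
\end{itemize}
The point to verify is that their bound takes the form $(\log n)^{C}\max\{1,\widehat\phi\}$ rather than $\zeta^{-\theta}$. If their statement is only in the second form, we would bootstrap: apply it at the last step where $\widehat\phi\ge(\log n)^{C}$, then use monotonicity.

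Part (iv) is their Lemma~7.16 on enlarged prescriptions. Prescribing the larger set $J$ of vertices costs exactly the factor $1/\widehat\phi_{B,I}$ for the bottleneck subtemplate $B\supseteq J$. We get this by bounding $\Phi_{A,\psi}$ via the count of $A$ rooted at $V(B)$, which has trajectory $\widehat\phi_{A,I}/\widehat\phi_{B,I}$. The choice of $B$ as a minimiser makes $(A,V(B))$ balanced in their sense, and then parts (ii)/(iii) apply with polylogarithmic loss.

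I expect the main obstacle to be bookkeeping rather than new ideas. Three issues need care: matching their error scale and thresholds to our $\zeta$ and $i^\theta_{A,I}$; handling the non-induced convention that nonedges are not tested; and confirming that their failure probability, stated at their parameter choice, is at most $\exp\{-(\log n)^{4/3}\}$ uniformly over the finitely many templates of order at most $\eta^{-4}$.
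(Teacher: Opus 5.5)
Your proposal matches the paper's proof, which likewise imports everything from Joos--K\"uhn with $\varepsilon_{\rm JK}=2\eta^2$: their Section~7 stopping event, their Lemma~10.1 for the failure probability, their balanced and strictly balanced counts, and their Lemma~7.16 for enlarged roots; deriving (i) from (ii) via strict balance of $(F,f)$ and $\widehat\phi_{F,f}(i^\star)=n^{b\eta+o(1)}$ is a valid substitute for citing their copy-degree estimate. Your fallback arguments would not work on their own, however: monotonicity cannot turn a $\zeta^{-\theta}$-type bound into $(\log n)^{C}\max\{1,\widehat\phi\}$, and in (iv) minimality of $B$ gives only $\widehat\phi_{A[U],V(B)}\ge1$ for $U\supseteq V(B)$, not balance of $(A,V(B))$, while the extensions of $\psi$ from $J$ to $V(B)$ also need a polylogarithmic bound, so (iii) and (iv) rest on the cited lemmas in exactly the stated form (or on a chain decomposition into strictly balanced steps, each handled by (iii)).
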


\begin{proof}
Apply \cite[Sections~5--7, Lemmas~7.14 and~7.16, and Lemma~10.1]{JoosKuhn}
to the complete initial $k$-graph.  Every bounded rooted template has
initial count $(1+O(n^{-1}))\widehat\phi_{A,I}(0)$, so the common
stopping event controls all the required templates and root injections.
The choice $\varepsilon_{\rm JK}=2\eta^2<\eta$ places $i^\star$ before
the controlled horizon; their Lemma~10.1 gives the exceptional
probability.  Their balanced counts, strictly balanced upper bounds,
and enlarged-root bounds give (ii)--(iv), and the rooted copy-degree
estimate gives (i).  In (iv), prescribing additional vertices of $J$
does not change the tested-edge set $E(A)\setminus E(A[I])$.
\end{proof}

The balanced-template cutoff also yields the following consequence.

\begin{lemma}[Polynomial trajectory lies before the balanced cutoff]
\label{lem:JK-cutoff-adapter}
For fixed $F,\eta$ there is a choice of $\delta_{\rm JK}>0$,
depending only on $F,\eta$ and compatible with
Proposition~\ref{prop:JK-handoff}, such that the following holds
\emph{simultaneously}.  Let $(A,I)$ be any balanced rooted template
with at least one edge outside $A[I]$ and $|V(A)|\le\eta^{-4}$.
If, for some fixed $\xi>0$,
\begin{equation}\label{eq:JK-positive-trajectory}
 \widehat\phi_{A,I}(i^\star)\ge n^\xi,
\end{equation}
then
\begin{equation}\label{eq:JK-cutoff-check}
 i^\star<i_{A,I}^{\delta_{\rm JK}^{1/2}}.
\end{equation}
Consequently Proposition~\ref{prop:JK-handoff}(ii) gives a
polynomially small relative error for $\Phi_{A,\psi}$, uniformly
over all applicable templates and prescribed root injections.
\end{lemma}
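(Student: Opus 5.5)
The plan is to compare the trajectory $\widehat\phi_{A,I}$ with the error scale $\zeta$ along the whole deterministic schedule, and to show that $\widehat\phi_{A,I}(i)>\zeta(i)^{-\theta}$ for every $i\le i^\star$ once $\theta=\delta_{\rm JK}^{1/2}$ is small enough. By definition, $i^{\theta}_{A,I}$ is the first $i$ with $\widehat\phi_{A,I}(i)\le\zeta(i)^{-\theta}$, so this gives $i^\star<i^{\theta}_{A,I}$, which is \eqref{eq:JK-cutoff-check}.

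\textbf{Step 1: the error scale is bounded by a fixed power of $n$.} Write $p=\widehat p(i)$, which decreases from $1$ to $p_\star$ as $i$ runs from $0$ to $i^\star$. Then $\widehat\phi_{A,I}(i)=n^{x}p^{\,e}$, with $x=|V(A)|-|I|$ and $e=e(A)-e(A[I])\ge1$. Also $\zeta(i)^{-1}=n^{1/2-\eta^2}p^{\rho/2}$. For $p\ge p_\star$ this lies in $[n^{\rho\eta/2-\eta^2+o(1)},\,n^{1/2}]$. Hence $\zeta(i)^{-\theta}\le n^{\theta/2}$ throughout the range.

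\textbf{Step 2: monotonicity of the trajectory.} Since $e\ge1$, the trajectory $n^xp^e$ is nonincreasing in $i$. Its minimum over $0\le i\le i^\star$ is therefore attained at $i^\star$, where \eqref{eq:JK-positive-trajectory} gives $\widehat\phi_{A,I}(i^\star)\ge n^\xi$.

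\textbf{Step 3: choice of $\delta_{\rm JK}$.} Choosing $\delta_{\rm JK}$ with $\delta_{\rm JK}^{1/2}<\xi$ gives $\widehat\phi_{A,I}(i)\ge n^\xi>n^{\delta_{\rm JK}^{1/2}/2}\ge\zeta(i)^{-\delta_{\rm JK}^{1/2}}$ for all $i\le i^\star$. The inequality holds strictly for large $n$.

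\textbf{Main obstacle: uniformity.} The value of $\delta_{\rm JK}$ must depend only on $F$ and $\eta$, not on the template or on $\xi$. To handle this, note that there are only finitely many rooted templates of order at most $\eta^{-4}$. Each exponent $x-e(1/\rho-\eta)$ of $\widehat\phi_{A,I}(i^\star)$ lies in a finite set determined by $F$ and $\eta$. So either the exponent is positive, and then at least some fixed $\xi_0(F,\eta)>0$, or the template does not satisfy \eqref{eq:JK-positive-trajectory}. We therefore take $\delta_{\rm JK}<\xi_0^2$, which is simultaneously valid for every applicable template. We shrink it further if needed to meet the smallness requirement of Proposition~\ref{prop:JK-handoff}; this is compatible, since that proposition holds for all sufficiently small $\delta_{\rm JK}$.

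\textbf{Conclusion.} Proposition~\ref{prop:JK-handoff}(ii) now applies to every such template. It gives relative error $O(\zeta(i^\star)^{\delta_{\rm JK}})=n^{-\Omega_F(\eta)\delta_{\rm JK}}$, which is polynomially small, uniformly over the templates and over all root injections.
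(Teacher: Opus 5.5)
Your proof is correct and follows essentially the paper's argument: finiteness of the templates of order at most $\eta^{-4}$ gives a common positive lower bound on the positive terminal exponents, you choose $\delta_{\rm JK}$ small relative to it and small enough for Proposition~\ref{prop:JK-handoff}, and monotonicity in $i$ transfers the comparison at $i^\star$ to every earlier step. The one difference is minor: the paper shows that the ratio $\widehat\phi_{A,I}(i)/\zeta(i)^{-\delta_{\rm JK}^{1/2}}$ is a positive power of $\widehat p(i)$ (using $q>\rho\delta_{\rm JK}^{1/2}/2$) and compares at $i^\star$ with the sharper value $n^{(\rho\eta/2-\eta^2)\delta_{\rm JK}^{1/2}+o(1)}$, whereas you use monotonicity of the trajectory alone together with the crude uniform bound $\zeta(i)^{-\delta_{\rm JK}^{1/2}}\le n^{\delta_{\rm JK}^{1/2}/2}$, which avoids that extra condition.
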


\begin{proof}
Write $q=e(A)-e(A[I])\ge1$ and $t=|V(A)|-|I|$.
The terminal trajectory has exponent
\[
 \widehat\phi_{A,I}(i^\star)
 =n^{t-q/\rho+q\eta+o(1)}.
\]
Since the template order is at most $\eta^{-4}$, all positive
exponents $t-q/\rho+q\eta$ have a common positive lower bound.  Write $\delta=\delta_{\rm JK}$.
Since
\[
 \zeta(i^\star)^{-\delta^{1/2}}
 =n^{(\rho\eta/2-\eta^2)\delta^{1/2}+o(1)},
\]
we may choose $\delta$ once, sufficiently small in terms of $F,\eta$,
so that the exponent on the right is below half this common lower
bound and $q>\rho\delta^{1/2}/2$ holds for every $q\ge1$.
Up to a factor independent of $i$, the ratio
$\widehat\phi_{A,I}(i)/\zeta(i)^{-\delta^{1/2}}$
is then a positive power of $\widehat p(i)$; it decreases as the
process advances.  At $i^\star$ the ratio is bigger than one, and
therefore \eqref{eq:JK-cutoff-check} holds simultaneously for all the templates in the
statement.  Decreasing $\delta$ further if necessary also meets the
conditions of Proposition~\ref{prop:JK-handoff}.
\end{proof}

\subsection{Cyclic-template counts}

The counting argument uses the cyclic-union structure established by
Joos and K\"uhn \cite[Lemmas~12.2, 12.3 and~12.5]{JoosKuhn} and their
supercritical-extension argument \cite[Lemma~12.6]{JoosKuhn}.

\begin{proof}[Proof of Lemma~\ref{lem:cyclic-type-count}]
Let $S$ be a cyclic $F$-union. Lemma~12.5 in \cite{JoosKuhn} gives a
resource edge $e$ with $\rho_{S,e}>\rho=b/a$.  The integer surplus is
therefore
\[
 a(e(S)-1)-b(v(S)-k)\ge1,
 \qquad
 v(S)-e(S)/\rho\le k-1/\rho-1/b.
\]
The exponent $k-1/\rho-1/b$ is positive: strict balance excludes a
matching, so some two role edges meet and $\rho\ge1/(k-1)$, giving
$k-1/\rho-1/b\ge1-1/b>0$.

By \cite[Lemmas~12.2 and~12.3]{JoosKuhn}, either $(S,\varnothing)$
is strictly balanced or it contains a proper unrooted subtemplate $B$
that is strictly balanced with $e(B)/v(B)\ge\rho$, chosen with
maximum density and minimum vertex count.  In the second case every
extension root $J\supseteq V(B)$ has density $\rho_{S,J}\ge\rho$.

In the first case the strictly balanced handoff bound from
Proposition~\ref{prop:JK-handoff}(iii) gives
\[
 \#\{S\hookrightarrow H_\star\}
 \le(1+\log n)^{O_{F,S}(1)}n^{v(S)}p_\star^{e(S)}
 \le(1+\log n)^{O_{F,S}(1)}
     n^{k-1/\rho-1/b+O_{F,S}(\eta)}.
\]
In the second case the same handoff bound counts $B$ in at most
$(1+\log n)^{O_{F,S}(1)}n^{O_{F,S}(\eta)}$ ways.  
By \cite[Lemma~12.6]{JoosKuhn}, using the upper bound
in Proposition~\ref{prop:JK-handoff}(iii), gives at most another
$(1+\log n)^{O_{F,S}(1)}n^{O_{F,S}(\eta)}$ completions:
the density condition $\rho_{S,J}\ge\rho$ is retained when the root
is enlarged during that induction.  
\end{proof}

\bibliographystyle{plain}
\bibliography{refs}

\end{document}